\definecolor{Black}{cmyk}{0,0,0,1}
\definecolor{OrangeRed}{cmyk}{0,0.6,1,0}            
\definecolor{DarkBlue}{cmyk}{1,1,0,0.20}
\definecolor{myblue}{rgb}{0.66,0.78,1.00}
\definecolor{Violet}{cmyk}{0.79,0.88,0,0}
\definecolor{Lavender}{cmyk}{0,0.48,0,0}
\newtheorem{theorem}{Theorem}[section]
\newtheorem{lemma}[theorem]{Lemma}
\newtheorem{corollary}[theorem]{Corollary}
\theoremstyle{definition}
\newtheorem{example}[theorem]{Example}
\newcommand{\z}{\zeta}
\newcommand{\bea}{\begin{eqnarray*}}
\newcommand{\eea}{\end{eqnarray*}}
\numberwithin{equation}{section}
\begin{document}

\title[SCV]{ Several complex variables\\}

%
%
\subjclass[2000]{}
\date{\today}
\keywords{}

\subjclass[2000]{}
\date{\today}
\keywords{}

\author{John Erik Forn\ae ss}
\address{Department for Mathematical Sciences\\
Norwegian University of Science and Technology\\
Trondheim, Norway}
\email{john.fornass@math.ntnu.no}


\maketitle

\centerline{Course for talented undergraduates} 
\centerline{Beijing University, Spring Semester 2014}

\tableofcontents

(cf. \cite{FS:DynamicsI})

\section{Introduction}

These lectures will give an introduction to several complex variables. We will generally follow the classical book by Hormander, {\bf An Introduction to complex analysis in several variables.} The notes will add some more details to the text of Hormander, especially after the introductory material. We generally follow the numbering of results as in Hormanders book, but results in Hormander might be broken up into smaller steps, for example, Lemma 4.1.3 in Hormanders book is broken up into 4.1.$3_a$ to 4.1.$3_j$
in these notes. 

\noindent We follow Chapter 1.1, 1.2 and 1.6\\
and then Chapter 2.1, 2.2, 2.5 and 2.6.\\

Afterwards we move to Chapter 4.1, 4.2. We deviate a little by considering $L^p$ spaces for general $p$ for a while, before restricting to $L^2$ spaces. We prove the existence of solutions to $\overline{\partial}$ on pseudoconvex domains in $\mathbb C^n$ in $L^2_{loc}$ and give the solution of the Levi problem. At the end we go through the recent proof by 
Bo-Yong Chen of the Ohsawa-Takegoshi Theorem. This requires first that we discuss a version of Hormanders solution of the $\overline{\partial}$ equation in $L^2$ as in Theorem 1.1.4 in the 1965 Acta paper of Hormander.\\

Some additional material that is not needed for the presentation are in the Appendices.
For example, the solution of $\overline{\partial}$ in a polydics of section 2.3 is in an appendix,
since it is not needed in the proof of the $L^2$ theorem of Hormander. There are also some remarks
on $L^p$ spaces there, such as Ohsawa-Takegoshi in $L^p$ and the strong openness conjecture in $L^p$. \\

The author thanks the Beijing International Center for Mathematical Research, BICMR,
for its hospitality during the Spring Semester of 2014.\\

The course can also be downloaded from\\

\noindent http://www.bicmr.org/news/2014/0221/1386.html \\
1.7may2014.pdf\\

Preliminaries for the course is some knowledge of one complex variable and some functional analysis.\\

\section{Hormander, Section 1.1-1.2}


Chapter I: Analytic functions of one complex variable.\\

1.1 Preliminaries: \\

Here holomorphic (=analytic) functions are introduced. These are $\mathcal C^1$ functions on domains in $\mathbb C$ which satisfy the Cauchy-Riemann equation

$$
\frac{\partial u}{\partial \overline{ z}}=0$$

The set of all such functions is denoted by $A(\Omega).$

Here

\bea
\frac{\partial u}{\partial  z} & = & \frac{1}{2}\left(\frac{\partial u}{\partial x}-i\frac{\partial u}{\partial y}\right)\\
\frac{\partial u}{\partial \overline{ z}} & = & \frac{1}{2}\left(\frac{\partial u}{\partial x}+i\frac{\partial u}{\partial y}\right)\\
\eea

Generally,
$$du=\frac{\partial u}{\partial x}dx+\frac{\partial u}{\partial y}dy=\frac{\partial u}{\partial  z}dz+\frac{\partial u}{\partial \overline{z}}d\overline{z}.$$

For analytic functions, $du=\frac{\partial u}{\partial  z}dz$, i.e. $du$ and $dz$ are paralell.
For analytic functions we write $\frac{\partial u}{\partial z}=u'.$

Examples of analytic functions are all polynomials $P(z)=\sum_i a_i z^i$ and the exponential function $e^z=e^x(\cos y+i\sin y)$.
We have that products, compositions and inverses of analytic functions are analytic.

1.2 Cauchy's integral formula and its applications.\\

Let $\omega$ be a bounded open set in $\mathbb C$ with boundary consisting of finitely many $\mathcal C^1$ Jordan curves. For ease of reference, we list the results using the same numbering as in Hormander. The proofs can be read easily in Hormanders book.

Cauchy integral formula for general functions:\\

{\bf Theorem 1.2.1} 
Let $u\in \mathcal C^1(\overline{\omega}).$ Then
for $\zeta\in \Omega,$
$$
u(\zeta)=\frac{1}{2\pi i}\left(\oint_{\partial \omega}\frac{u(z)}{z-\zeta}dz+
\int\int_\omega \frac{\frac{\partial u}{\partial \overline{z}}}{z-\zeta}dz\wedge d\overline{z}\right)
$$

{\bf Theorem 1.2.2 }
If $\mu$ is a measure with compact support in $\mathbb C$, the integral
$$
u(\zeta)=\int \frac{d\mu(z)}{z-\zeta}
$$
defines an analytic $\mathcal C^\infty$ function outside the closed support of $\mu$. In any open set
$\omega$ where $d\mu=\frac{\phi (z)dz\wedge d\overline{z}}{2\pi i}$ for some $\phi\in \mathcal C^k(\omega),k \geq 1,$ we have that $u\in \mathcal C^k(\omega)$ and $\frac{\partial u}{\partial \overline{z}}=\phi.$

{\bf Corollary 1.2.3}
Every $u\in A(\Omega)$ is in $\mathcal C^\infty$. Also $u'\in A(\Omega)$ if $u\in A(\Omega).$

{\bf Theorem 1.2.4}
For every compact set $K\subset \Omega$ and every open neighborhood $\omega\subset \Omega$ of $K$ there are constants $C_j,j=0,1,\dots$ such that 
$$
(1.2.4) \sup_{z\in K}|u^{(j)}(z)| \leq C_j \|u\|_{L^1(\omega)}$$
for all $u\in A(\Omega)$, where $u^{(j)}=\frac{\partial^j u}{\partial z^j}.$

{\bf Exercises}\\
Recall that a function $f(z)=u(x,y)+iv(x,y)$ is analytic if $f\in \mathcal C^1$ and $\frac{\partial f}{\partial \overline{z}}=0.$\\
1) Show that the equation $\frac{\partial f}{\partial \overline{z}}=0$ is equivalent to the classical Cauchy Riemann equations 
\bea
\frac{\partial u}{\partial x} & = & \frac{\partial v}{\partial y} \\
\frac{\partial u}{\partial y} & = & -\frac{\partial v}{\partial x} \\
\eea

2) Show that the function defined by $f(z)=e^{-(z^{-4})}$ for $z\neq 0$ and $f(0)=0$
satisfies the Cauchy Riemann equations at every point. 
Is $f\in \mathcal C^1?$\\

3) Show that 
\bea
\overline{\left(\frac{\partial f}{\partial z}\right)} & = & \frac{\partial \overline{f}}{\partial \overline{z}}\\
& \mbox{and} & \\
\overline{\left(\frac{\partial f}{\partial \overline{z}}\right)} & = & \frac{\partial \overline{f}}{\partial {z}}\\
\eea

\newpage

\section{Hormander, 1.2,1.6}

{\bf Corollary 1.2.5}
If $u_n\in A(\Omega)$ and $u_n\rightarrow u$ when $n\rightarrow \infty$, uniformly on compact subsets of $\Omega$, it follows that $u\in A(\Omega).$ 

\begin{proof}
Pick a point $\zeta\in \Omega$ and choose a disc $\overline{\Delta}(w,r)\subset \Omega$
with $\zeta\in \Delta(w,r).$
For each $n,$ we have by theorem 1.2.1. that 
$$
u_n(\zeta)=\frac{1}{2\pi i}\int_{|z-w|=r}\frac{u_n(z)}{z-\zeta}dz.
$$
Using the uniform convergence we then get $u$ is continuous and 
$$
u(\zeta)=\frac{1}{2\pi i}\int_{|z-w|=r}\frac{u(z)}{z-\zeta}dz.
$$
From this formula we see that $u$ is $\mathcal C^1$ and analytic on $\Delta(w,r).$

\end{proof}

{\bf Corollary 1.2.6}
If $u_n\in A(\Omega)$ and the sequence $|u_n|$ is uniformly bounded on every compact subset of $\Omega$, there is a subsequence $u_{n_j}$ converging uniformly on every compact subset of $\Omega$ to a limit $u\in A(\Omega).$

{\bf Corollary 1.2.7}
The sum of a power series $\sum_0^\infty a_nz^n$ is analytic in the interior of the circle of convergence.

{\bf Theorem 1.2.8}
If $u$ is analytic in $\Omega=\{z;|z|<r\}$, we have 
$$
u(z)=\sum_0^\infty u^{(n)}(0)z^n/n!
$$
with uniform convergence on every compact subset of $\Omega.$

Uniqueness of analytic continuation:\\

{\bf Corollary 1.2.9}
If $u\in A(\Omega)$ and there is some point $z\in \Omega$ where $u^{(k)}(0)=0$ for all $k\geq 0$,
it follows that $u=0$ in $\Omega$ if $\Omega$ is connected.

{\bf Corollary 1.2.10}
If $u$ is analytic in the disc $\Omega=\{z;|z|<r\}$ and if $u$ is not identically $0$, one can write $u$ in one and only onoe way in the form
$$
u(z)=z^nv(z)
$$
where $n\geq 0$ is an integer and $v\in A(\Omega)$, $v(0)\neq0$ (which means that $1/v$ is also analytic in a neighborhood of $0.$

{\bf Theorem 1.2.11}
If $u$ is analytic in $\{z;|z-z_0|<r\}=\Omega$ and if $|u(z)|\leq |u(z_0)|$ when $z\in \Omega$, then $u$ is constant in $\Omega.$

Maximum Principle:\\

{\bf Corollary 1.2.12}
Let $\Omega$ be bounded and let $u\in \mathcal C(\overline{\Omega})$ be analytic in $\Omega.$ Then the maximum of $|u|$ in $\overline{\Omega}$ is attained on the boundary.

1.6 Subharmonic Functions.\\

Definition: A $\mathcal C^2$ function is said to be harmonic if
$\Delta h= 4\frac{\partial^2h}{\partial z\partial \overline{z}}=0$. This is equivalent to 
the equation $\frac{\partial^2 h}{\partial x^2}+\frac{\partial^2 h}{\partial x^2}=0.$
If $h=\mbox{Re}(f),$ f analytic, then $h$ is harmonic:
\bea
\frac{\partial^2 h}{\partial z\partial \overline{z}} & = & \frac{\partial^2 (\frac{f+\overline{f}}{2})}{\partial z\partial \overline{z}}\\
& = & \frac{1}{2}\frac{\partial^2 f}{\partial z\partial \overline{z}}+\frac{1}{2}\frac{\partial^2 \overline{f}}{\partial z\partial \overline{z}}\\
& = & \frac{1}{2}\frac{\partial}{\partial z}\left(\frac{\partial f}{\partial \overline{z}}\right)+\frac{1}{2}\frac{\partial}{\partial \overline{z}}\left(\frac{\partial \overline{f}}{\partial {z}}\right)\\
& = & \frac{1}{2}\frac{\partial}{\partial \overline{z}}\overline{\left(\frac{\partial {f}}{\partial \overline{z}}\right)}\\
& = & 0\\
\eea
Conversely, suppose that $u$ is $\mathcal C^2$ and harmonic on a disc $D$.
Then for each Jordan curve $\gamma\in D$ bounding a domain $U,$ we get by Stokes theorem that $\oint_\gamma (-u_ydx+u_xdy)=\int_U u_{xx}+u_{yy}=0.$ Hence the function $v(q)=\int_{z_0}^q
-u_ydx+u_xdy$ defines a function $v$ on $D$. This function satisfies $v_x=-u_y$ and
$v_y=u_x$ so $u+iv$ is analytic. 
So on a disc, we see that $h$ is harmonic if and only if $h$ is the real part of an analytic function.\\

{\bf Definition 1.6.1}
A function $u$ defined in an open set $\Omega\subset \mathbb C$ with values in $[-\infty,\infty>$
is called subharmonic if \\
a) $u$ is upper semicontinuous, that is $\{z\in \Omega;u(z)<c\}$ is open for every real number $c$.\\
b) For every compact $K\subset \Omega$ and every continuous function $h$ on $K$ which is harmonic in the interior of $K$ and such that $u\leq h$ on $\partial K$, we have $u\leq h$ on $K.$ \\

Note: The function $u\equiv -\infty$ is called subharmonic in this text.\\
Note: A function is $u$ upper semicontinuous if and only if there exists a sequence of continuous functions $u_j$ such that $u_j\searrow u.$\\

\newpage

\section{Hormander 1.6}

Note: Harmonic functions are subharmonic: Let $u$ be harmonic on $\Omega$ and let $K$ be a compact subset of $\Omega.$ Let $h$ be a continuous function on $K$ which is harmonic on the interior of $K$. Also suppose that $u\leq h$ on $\partial K.$ Suppose there exists a point $p\in$ the interior $K$ such that $u(p)>h(p).$ Then if $\epsilon>0$ is small enough we have that the function
$v=u-h+\epsilon |z|^2$ satisfies $v(p)>\sup_{\partial K} v$. We can choose $p$ to be a point where
$v$ takes on a maximum value. Note however that $v_{xx}+v_{yy}=4\epsilon>0$. This contradicts that $p$ is a max point.

{\bf Theorem 1.6.2}
If $u$ is subharmonic and $0<c\in \mathbb R$, it follows that $cu$ is subharmonic.
If $u_\alpha, \alpha\in A,$ is a family of subharmonic functions, then $u=\sup_\alpha{u_\alpha}$
is subharmonic if $u<\infty$ and $u$ is upper semicontinuous, which is always the case if $A$ is finite. If $u_1,u_2,\dots$ is a decreasing sequence of subharmonic functions, then $u=\lim_{j\rightarrow \infty}u_j$ is subharmonic.

{\bf Theorem 1.6.3}
Let $u$ be defined with values in $[-\infty,\infty>$ and assume that $u$ is upper semicontinuous. Then each of the following conditions are necessary and sufficient for $u$ to be subharmonic:\\
(i) If $D$ is a closed disc in $\Omega$ and $f$ is an analytic polynomial such that $u\leq {\mbox{Re}}(f)$ on $\partial D$, then it follows that $u\leq {\mbox{Re}}(f)$ on $D.$\\
(ii) If $\Omega_\delta=\{z\in \Omega; d(z,\Omega^c)>\delta\}$, we have
$$
(1.6.1)\;\; u(z)2\pi \int d\mu(r)\leq \int_0^{2\pi}\int u(z+re^{i\theta})d\theta d\mu(r), z\in \Omega_\delta$$
for every positive finite measure $d\mu$ on the interval $[0,\delta].$\\
(iii) For every $\delta>0$ and every $z\in \Omega_\delta$ there exists some positive finite measure
$d\mu$ with support in $[0,\delta]$ such that $d\mu$ has some mass outside the origin and (1.6.1)
is valid.\\

As pointed out by one the students in class, statement (ii) is not the right one. The correct statement
should be

(ii) Let $z\in \Omega$ and assume that $\{z+re^{i \theta};\theta\in [0,2\pi], 0\leq r\leq s\}\subset \Omega.$ Then (1.6.1) holds where $\mu$ is supported on $[0,s].$\\

{\bf Corollary 1.6.4}
If $u_1,u_2$ are subharmonic, then $u_1+u_2$ is subharmonic

\begin{proof}
We use (1.6.1).
\end{proof}

{\bf Corollary 1.6.5}
A function $u$ defined in an open set $\Omega\subset \mathbb C$ is subharmonic if every point in $\Omega$ has a neighborhood on which $u$ is subharmonic.

We use property (iii) of Theorem 1.6.3.

{\bf Corollary 1.6.6}
If $f\in A(\Omega),$ then $\log |f|$ is subharmonic.

\begin{proof}
Use property (i) in Theorem 1.6.3.
So suppose that $\log |f|\leq \mbox{Re}(g)$ on the boundary of a disc, where $g$ is a holomorphic polynomial.
Then $|f|\leq e^{\mbox{Re}(g)}=|e^g|$ on the boundary of the disc. Hence
$|fe^{-g}|\leq 1$ on the boundary of the disc. Hence also on the inside.
Therefore $\log|f|\leq \mbox{Re}(g)$ on the whole disc.
\end{proof}

{\bf Theorem 1.6.7}
Let $\phi$ be a convex increasing function on $\mathbb R$ and set $\phi(-\infty)=\lim_{x\rightarrow -\infty} \phi(x).$ Then $\phi(u)$ is subharmonic if $u$ is subharmonic.

{\bf Definition}
A function $\phi(x)$ is convex if for every $a<b$, and every $t\in <0,1>$,
$\phi(ta+(1-t)b)\leq tf(a)+(1-t)f(b),$ i.e. the graph lies under the chord.

Observation: An immediate consequence is that out side $(a,b)$ the graph lies above the straight line continuing the chord.

{\bf Lemma 1.6.7a}
Suppose that $\phi(x)$ is convex and suppose that $x_0\in \mathbb R.$ Then there exists a constant $k\in \mathbb R$ so that
$\phi(x)\geq \phi(x_0)+k(x-x_0)$ for all $x.$ Also $\phi$ is continuous.

Proof of the Lemma. 
Take any sequence $a_n<x_0<b_n$ where both converge to $x_0.$ Let $k$ be any limit for the slopes of the chords from $a_n$ to $b_n$.\\

To prove continuity, suppose that $x_n\searrow x_0$. Fix $a<x_0<b.$ Considering the chord from
$a$ to $x_n$ shows that $\liminf \phi(x_n)\geq \phi(x_0).$ Considering the chord from $x_0$ to $b$ shows that the $\limsup \phi(x_n)\leq \phi(x_0).$ A similar argument applies for $x_n \nearrow x_0.$

Proof of theorem 1.6.7:\\
Let $x_0\in \mathbb R$ and let $k$ be as in Lemma 1.6.7a.
Let $x=u(z+re^{i\theta}).$
Then
$$
\phi(u(z+re^{i\theta})) \geq \phi(x_0)+ k(u(z+re^{i\theta})-x_0).
$$
Hence 
$$
\frac{1}{2\pi } \int_0^{2\pi}\phi(u(z+re^{i\theta}))d\theta \geq \phi(x_0)+ k(\frac{1}{2\pi}\int_0^{2\pi}u(z+re^{i\theta})d\theta-x_0).
$$
We want to show that $$
\frac{1}{2\pi } \int_0^{2\pi}\phi(u(z+re^{i\theta}))d\theta \geq \phi(u(z)).
$$
If $\frac{1}{2\pi}\int_0^{2\pi}u(z+re^{i\theta})d\theta =-\infty,$ this is clear.
So assume the integral is finite.

Let $x_0=\frac{1}{2\pi}\int_0^{2\pi}u(z+re^{i\theta})d\theta.$
Then 

$$
\frac{1}{2\pi } \int_0^{2\pi}\phi(u(z+re^{i\theta}))d\theta \geq \phi(\frac{1}{2\pi}\int_0^{2\pi}u(z+re^{i\theta})d\theta).
$$

Since $\frac{1}{2\pi}\int_0^{2\pi}u(z+re^{i\theta})d\theta\geq u(z)$ and since $\phi$ is increasing,
$\phi(\frac{1}{2\pi}\int_0^{2\pi}u(z+re^{i\theta})d\theta)\geq \phi(u(z)).$

Hence
$$
\frac{1}{2\pi } \int_0^{2\pi}\phi(u(z+re^{i\theta}))d\theta \geq \phi(u(z)).
$$

It follows from Theorem 1.6.3 (iii) that $\phi(u)$ is subharmonic.\\

{\bf Corollary 1.6.7b}
If $u$ is subharmonic, then $e^{u}$ is subharmonic. If $f$ is analytic, $|f|$ is subharmonic.

The first part follows from Theorem 1.6.7 since $e^x$ is a convex increasing function. Since $\log |f|$
is subharmonic by Corollary 1.6.6, it follows that $|f|=e^{\log |f|}$ also is subharmonic.

{\bf Corollary 1.6.8}
Let $u_1,u_2$ be nonnegative and assume that $\log u_j$ is subharmonic in $\Omega.$
Then $\log (u_1+u_2) $ is subharmonic.

{\bf Theorem 1.6.9}
Let $u$ be subharmonic in the open set $\Omega$ and not identically $-\infty$ in any connected component of $\Omega.$ Then $u$ is integrable on all compact subsets of $\Omega$ (we write $u\in L^1_{loc}(\Omega)$), which implies that $u>-\infty$ almost everywhere.
 
\begin{proof}
Suppose that $u(z)>-\infty.$ Pick a closed disc $D$ centered at $z$ contained in $\Omega.$
If we let $\mu=rdr$ we get from (1.6.1) that
$u(z)A\leq \int_D u dA.$ Note that $u$ is bounded above on $D.$ Hence it follows that $u$ is in $L^1$ on $D.$ It follows that $u>-\infty$ a.e. on $D$. Hence we can repeat the argument for points $z$ near the boundary of $D.$ It follows that the set $U$ of points $z\in \Omega$ where $u$ is integrable in some neighborhood, is open and closed. By hypotheses $U$ is nonempty. Hence $U=\Omega.$
\end{proof}

{\bf Exercises}\\

1) Show that the function $\sup_{0<\epsilon<1} \epsilon \log |z|$ fails to be subharmonic.\\

2) Suppose that $u$ is a $\mathcal C^2$ subharmonic function on $\mathbb C$. Let $f:\Omega \rightarrow \mathbb C$ be an analytic function defined on an open set $\Omega$ in $\mathbb C.$
Show that $u\circ f$ is subharmonic on $\Omega.$\\

3) Let $u$ a subharmonic function in $\{|z|<2\}.$ Suppose that $u(z)=0$ for all $z,1<|z|<2.$
Show that $u\equiv 0.$\\

\section{Hormander 1.6, 2.1}

{\bf Theorem 1.6.10}
If $u$ is subharmonic in $\Omega$ and not $-\infty$ identically in any component of $\Omega,$
then we have that $$(1.6.3) \;\;\int u\Delta vd\lambda\geq 0$$ for all $v\in \mathcal C^2_0(\Omega)$ with $v\geq 0.$
Here $\lambda$ denotes Lebesgue measure.

\begin{proof}
Let $0<r<d(\mbox{supp}(v),\Omega^c).$ Then by 1.6.1 we have for every $z\in \mbox{supp}(v)$
that
$$
2\pi u(z) \leq \int_0^{2\pi}u(z+re^{i\theta})d\theta.
$$

Since $v\geq 0,$ we get for every $z\in \mbox{supp}(v)$ that
$$
2\pi u(z)v(z) \leq \int_0^{2\pi}u(z+re^{i\theta})v(z)d\theta.
$$

We integrate with respect to $\lambda.$

\bea
\int\int (2\pi u(z)v(z) )d\lambda & \leq  & \int\int(\int_0^{2\pi}u(z+re^{i\theta})v(z)d\theta)d\lambda\\
& = & \int_0^{2\pi}(\int\int u(z+re^{i\theta})v(z)d\lambda)d\theta\\
& = & \int_0^{2\pi}(\int\int u(z)v(z-re^{i\theta})d\lambda)d\theta\\
& = & \int \int u(z)(\int_0^{2{\pi}}v(z-re^{i\theta}) d\theta)d\lambda\\
\eea

We can also rewrite the left side:

\bea
\int\int (2\pi u(z)v(z) )d\lambda & = & \int\int u(z)(\int_0^{2\pi}v(z) d\theta)d\lambda\\
\eea

Hence we see that

$$
\int \int u(z)(\int_0^{2{\pi}} (v(z-re^{i\theta})-v(z))d\theta)d\lambda \geq 0.
$$

We Taylor expand the intergrand $v(z-re^{i\theta})-v(z)$.

\bea
v(z-re^{i\theta})-v(z) & = &- v_x(z)r\cos \theta-v_y(z)r\sin \theta
+\frac{1}{2}v_{xx} r^2 \cos^2\theta\\
& + & \frac{1}{2}v_{yy} r^2\sin^2\theta
+v_{xy}r^2\cos\theta\sin\theta+o(r^2)\\
\eea

We hence get an expression

\bea
-\int \int u(z)(\int_0^{2{\pi}} v_x(z)r\cos \theta)d\theta)d\lambda
& - & 
\int\int u(z)(\int_0^{2\pi}v_y(z)r\sin \theta)d\theta)d\lambda\\
& + & 
\int\int u(z)(\int_0^{2\pi}\frac{1}{2}v_{xx} r^2 \cos^2\theta)d\theta)d\lambda\\
& + & 
\int\int u(z)(\int_0^{2{\pi}}\frac{1}{2}v_{yy} r^2\sin^2\theta)d\theta)d\lambda\\
& + & 
\int\int u(z)(\int_0^{2\pi}v_{xy}r^2\cos\theta\sin\theta)d\theta)d\lambda\\
& + & 
\int\int u(z)(\int_0^{2\pi}o(r^2))d\theta)d\lambda\\
& \geq &  0.
\eea

Hence after carrying out the inner integrals we see that

$$
\int\int u(z)(\frac{1}{2}v_{xx}\pi r^2+ \frac{1}{2} v_{yy}\pi r^2+ o(r^2))d\lambda \geq 0.
$$

If we divide by $\frac{\pi r^2}{2}$ and let $r\rightarrow 0,$ we see that
$$
\int \int u(z)\Delta v(z)\geq 0.$$
\end{proof}

{\bf Corollary 1.6.10a}
If $u$ is a $\mathcal C^2$ subharmonic function, then $\Delta u \geq 0.$\\

{\bf Proof.}  Let $v\geq 0$ be a compactly supported $\mathcal C^2$ function in the domain of $u.$
By (1.6.3), we have that $\int u\Delta v\geq 0.$ Integrating by parts twice, we see that
$\int v\Delta u\geq 0.$ Since this is valid for all compactly supported nonnegative $\mathcal C^2$ functions $v$, it follows that $\Delta u\geq 0.$

{\bf Theorem 1.6.11}
Let $u\in L^1_{loc}(\Omega)$ and assume that (1.6.3) holds. Then there is one and only one subharmonic function $U$ in $\Omega$ which is equal to $u$ almost everywhere. If $\phi$ is an integrable non-negative function of $|z|$ with compact support and $\int \phi=1$, we have for every $z\in \Omega$
$$
(1.6.4)\;\; U(z)=\lim_{\delta\rightarrow 0}\int u(z-\delta z')\phi(z')d\lambda(z').
$$

We will divide the proof into lemmas.

{\bf Lemma 1.6.11a}
Assume $u\in L^1_{loc}(\Omega).$ Let $\psi=\psi(|z|)$ be a nonnegative $\mathcal C^\infty$ function with compact support in the unit disc and $\int \psi=1.$ Then
the function 
$$
u_{\delta}(z):=\int u(z-\delta z') \psi(z')d\lambda(z')=\frac{1}{\delta^2}\int u(w)\phi(\frac{z-w}{\delta})d\lambda(w)
$$
is $\mathcal C^\infty$ in $\Omega_\delta$. If $V\subset\subset \Omega$, we have that
$\|u_\delta\|_{L^1(V_\delta)}\leq \|u\|_{L^1(V)}.$ Moreover $u_\delta\rightarrow u\in L^1$ on compact subsets.

\begin{proof}
The last equality is obtained by the change of variable, $w=z-\delta z'.$ The fact that $u\in \mathcal C^\infty$ follows from differentiation under the integral sign in the last integral.\\

The inequality in the $L^1$ norm follows by integration with respect to $z$ first.\\
To show the last statement, write $u=u_1+u_2$ where $u_1$ is continuous and $u_2$ has small $L^1$ norm. The convergence for $(u_1)_\delta$ is obvious. And the $L^1$ norm of $(u_2)_\delta$ is as small as we wish.
\end{proof}

{\bf Lemma 1.6.11b}
Suppose $U$ is subharmonic. Then (1.6.4) holds when we set $u=U$ in the integral on the right side.
In particular it follows that if two subharmonic functions are equal almost everywhere, they are identical.

\begin{proof}
It follows by (1.6.1) that for small $\delta$,
$$
U(z)\leq \int U(z-\delta z')\phi(z') d\lambda(z').
$$
By upper semicontinuity of $U$ it follows that the upper limit of the right side when $\delta\rightarrow 0$ is at $\leq U(z).$ Hence (1.6.4) holds with $u=U.$
\end{proof}

{\bf Lemma 1.6.11c}
Assume that $u\in \mathcal C^2(\Omega)$ and that $\Delta u\geq 0.$ Then $u$ is subharmonic.
Moreover $u_\delta\searrow u$.

\begin{proof}
Fix $z_0\in \Omega_\delta.$ Let $u_{z_0}(w)=\int_0^{2\pi }u(z_0+e^{i\theta}w)d\theta$ for $|w|<\delta$.
Then $u_{z_0}$ is $\mathcal C^2$ and $\Delta u_{z_0}\geq 0$. Moreover $u_{z_0}$ only depends on $|w|.$
We calculate the Laplacian of $u_{z_0}$ at points $w=x+iy,x\geq 0, y=0.$
We can write $u_{z_0}(x,y)=u_{z_0}(\sqrt {x^2+y^2},0)=g(\sqrt{x^2+y^2}).$
We get that $g''(x)+g'(x)/x\geq 0$ for $x>0$, so $xg''(x)+g'(x)\geq 0, x\geq 0.$ 
It follows that $xg'(x)$ increases. The value at $x=0$ is $0$, so $g'(x)\geq 0.$ 
So $g(x)$ is increasing. By Theorem 1.6.3, it follows that $u$ is subharmonic, and we also get that
$u_\delta$ decreases to $u_\delta$ when $\delta\rightarrow 0.$
\end{proof}

{\bf Lemma 1.6.11d}
Assume $u\in L^1_{loc}$ satisfies (1.6.3). Let $\psi=\psi(|z|)$ be a nonnegative $\mathcal C^\infty$ function with compact support in the unit disc and $\int \psi=1.$ Then
the function 
$$
u_{\delta}(z):=\int u(z-\delta z') \psi(z')d\lambda(z')
$$
is $\mathcal C^\infty$ and subharmonic in $\Omega_\delta$. Moreover $u_\delta\rightarrow u\in L^1$ on compact subsets.

\begin{proof}
Suppose that $u\in L^1_{loc}$ and that $\int u(z)\Delta v(z)\geq 0$ for all functions that are $\mathcal C^2$ with compact support and with $v\geq 0.$
Then it follows that also $u_{\delta}$ has this property. Then, by Lemma 1.6.11c it follows that 
$u_\delta $ is subharmonic. Also by Lemma 1.6.11a we have convergence in $L^1$ norm to $u.$
\end{proof}

{\bf Lemma 1.6.11e}
Let $u$ and $\psi$ be as in Lemma 1.6.11d.  Then if $\delta,\epsilon>0$,
\bea
& \mbox{Let}\; \epsilon \searrow 0:&\\
(u_\delta)_\epsilon & \searrow  & u_\delta\\
& \mbox{Let}\; \delta \searrow 0:&\\
(u_\epsilon)_\delta & \searrow & u_\epsilon\\
(u_\delta)_\epsilon & = & (u_\epsilon)_\delta \\
& \mbox{Let}\; \epsilon \searrow 0:&\\
u_\epsilon & \searrow & V\\
\eea

for some subharmonic function $V.$

\begin{proof}
We first show that $(u_\delta)_\epsilon\searrow u_\delta.$ 
By Lemma 1.6.11d we have that $u_\delta$ is $\mathcal C^\infty$ and subharmonic. Hence by Corollary 1.6.10.a it follows that $\Delta u_\delta\geq 0.$ Hence it follows by Lemma 1.6.11c that
$(u_{\delta})_{\epsilon}\searrow u_\delta.$ \\
The second limit holds for the same reason.\\

To prove the following equality, we see that
\bea
(u_\delta)_{\epsilon}(z) & = & \int u_\delta(z-\epsilon z') \psi(z')d\lambda(z')\\
& = & \int (\int u(z-\epsilon z'-\delta z'')\psi(z'')d\lambda(z''))\psi(z')d\lambda(z')\\
& = & (u_\epsilon)_\delta(z)\\
\eea

We show that $u_{\epsilon_1}(z)\geq u_{\epsilon_2}(z)$ if $\epsilon_1>\epsilon_2.$
We have shown that for each $\delta>0$,
$(u_{\delta})_{\epsilon_1}(z) \geq (u_{\delta})_{\epsilon_2}(z)$, hence
$(u_{\epsilon_1})_\delta(z)\geq (u_{\epsilon_2})_\delta(z)$. Hence by Lemma 1.6.11c,
$u_{\epsilon_1}(z)\geq u_{\epsilon_2}(z).$

Finally, by Theorem 1.6.2, the limit of $u_\epsilon$ is subharmonic.

\end{proof}

To finish the proof of Theorem 1.6.11, it suffices to note that by Lemma 1.6.11d the function $V$ obtained is equal to $u$ a.e. Also by Lemma 1.6.11b, we have that (1.6.4) holds for $V$ and so also for $u.$

Now we turn to Chapter 2, Elementary properties of functions of several complex variables.\\

2.1 Preliminaries.\\

We introduce coordinates in $\mathbb C^n$. Write $z=(z_1,z_2,\cdots z_n)$ where
each $z_j=x_j+iy_j.$ Let $u$ be a $\mathcal C^1$ function.
We introduce differential operators as in one variable.

\bea
\frac{\partial u}{\partial z_j} & = & \frac{1}{2}\frac{\partial u}{\partial x_j}-\frac{1}{2}\frac{\partial u}{\partial y_j}\\
\frac{\partial u}{\partial \overline{z}_j} & = & \frac{1}{2}\frac{\partial u}{\partial x_j}+\frac{1}{2}\frac{\partial u}{\partial y_j}\\
\eea

Then we get
$$
du= \sum_{j=1}^n \frac{\partial u}{\partial z_j}dz_j+\sum_{j=1}^n \frac{\partial u}{\partial \overline{z}_j}d\overline{z}_j.
$$

We write for short the first sum on the right as $\partial u$ and the last sum as $\overline{\partial}u$
We say that $\partial u$ is a form of type $(0,1)$ and $\overline{\partial}u$ a form of type $(0,1).$

{\bf Definition 2.1.1} A function $u\in \mathcal C^1(\Omega)$ is said to be analytic or holomorphic if
$du$ is of type $(1,0)$. Equivalently, $\overline\partial u=0$ and also equivalently, the function satisfies the Cauchy-Riemann equations in each variable separately which is again equivalent to saying that $u$ is analytic in each variable separately. The set is analytic functions on $\Omega$ is called $A(\Omega).$

If $I=(i_1,\dots,i_p)$ is a multiindex of integers between 1 and n we write
$dz^I=dz_{i_1}\wedge\cdots\wedge dz_{i_p}$, and we write $|I|=p.$ 

If $J=(j_1,\dots j_q)$ is a multiindex of integers between 1 and n we write
$d\overline{z}^J=d\overline{z}_{i_1}\wedge\cdots\wedge d\overline{z}_{i_q}$, and we write $|J|=q.$

The following expression is called a $(p,q)$ form
$f=\sum_{|I|=p,|J|=q} f_{I,J}dz^I\wedge d\overline{z}^J.$ It is OK to think of this as an expression without meaning.

\newpage

\section{Hormander 2.1, 2.2, 2.5}

Let $f=\sum_{I,J} f_{I,J}dz^I\wedge d\overline{z}^J$ be a $(p,q)$ form. We use the usual convention that
of two differentials, say  $dz_i$ and $dz_j$ are switched, the form changes sign. \\

We define the exterior differential $df$ as the form
$df=\sum_{I,J} df_{I,J}dz^I\wedge d\overline{z}^J$.
If we write $\partial f=\sum_{I,J} \partial f_{I,J}\wedge dz^I\wedge d\overline{z}^J$
and $\overline\partial f=\sum_{I,J} \overline\partial f_{I,J}\wedge dz^I\wedge d\overline{z}^J$
then $df=\partial f+\overline{\partial}f$. This writes $df$ as a sum of a $(p+1,q)$ form and a $(p,q+1)$ form.\\

We write $0=d^2f= \partial^2 f+(\partial\overline\partial+\overline{\partial}\partial)f+\overline{\partial}^2f$ and these have type (p+2,q),\\
(p+1,q+1) and (p,q+2) respectively so all three terms must vanish.

$$
\partial^2=0, \partial\overline\partial+\partial\overline{\partial}=0, \overline{\partial}^2=0.
$$

This implies that if $f$ is a (p,q+1) form and we want to solve the equation $\overline\partial u=f$ then is is necessary that $\overline{\partial} f=0.$\\

2.2 Applications of Cauchy's integral theorem in a polydisc.\\

Let $w=(w_1,\dots,w_n)$ be a point in $\mathbb C^n$ and let
 $r=(r_1,\dots,r_n)$ be positive numbers. We define the polydisc $D$ with center $w$ and polyradius $r$ to be the
set $D=D^n(w,r)=\{z\in \mathbb C^n; |z_j-w_j|<r_j, j=1,\dots, n\}.$
The set $\partial D_0:=\{|z_j-w_j|=r_j, j=1,\dots,n\}$ is called the distinguished boundary of $D.$

{\bf Theorem 2.2.1} Let $D$ be an open polydisc and let $u$ be in  $\mathcal C^1(\overline{D}).$
If $u$ is an analytic function of $z_j\in D^1(w_j,r_j)$ whenever the other variables are constant,
$|z_k-w_k|\leq r_k$, then
$$
(2.2.1)\;\;u(z)=\left(\frac{1}{2\pi i}\right)^n \int_{\partial_0D}\frac{u(\zeta_1,\dots,\zeta_n)d\zeta_1\dots d\zeta_n}{(\zeta_1-z_1)\cdots (\zeta_n-z_n)}
$$
for all $z\in D.$ Hence $u$ is $\mathcal C^\infty$ in $D.$

\begin{proof}
We use Cauchys integal formula inductively, i.e. Theorem 1.2.1.
We get then 
\bea
u(z) & = & \frac{1}{2\pi i} \int_{|\zeta_n-w_n|=r_n}\frac{u(z_1,\dots,z_{n-1},\zeta_n) d\zeta_n}{\zeta_n-z_n}\\
& = & \frac{1}{2\pi i} \int_{|\zeta_n-w_n|=r_n}\frac{(\frac{1}{2\pi i} \int_{|\zeta_{n-1}-w_{n-1}|=r_{n-1}}\frac{u(z_1,\dots,z_{n-2},\zeta_{n-1},\zeta_n) d\zeta_{n-1}}{\zeta_{n-1}-z_{n-1}}) d\zeta_n}{\zeta_n-z_n}\\
& = & \left(\frac{1}{2\pi i}\right)^2 \int_{|\zeta_{j}-w_{j}|=r_j, j=n-1,n}\frac{u(z_1,\dots,z_{n-2},\zeta_{n-1},\zeta_n)d\zeta_{n-1} d\zeta_n}{(\zeta_{n-1}-z_{n-1}) (\zeta_n-z_n)}\\
& = & \left(\frac{1}{2\pi i}\right)^3 \int_{|\zeta_{j}-w_{j}|=r_j, j=n-2,n-1,n}\frac{u(z_1,\dots,z_{n-3},\zeta_{n-2},\zeta_{n-1},\zeta_n)d\zeta_{n-2}d\zeta_{n-1} d\zeta_n}{(\zeta_{n-2}-z_{n-2}) \cdots(\zeta_n-z_n)}\\
& = & \cdots\\
& = & \left(\frac{1}{2\pi i}\right)^n \int_{\partial_0D}\frac{u(\zeta_1,\dots,\zeta_n)d\zeta_1\dots d\zeta_n}{(\zeta_1-z_1)\cdots (\zeta_n-z_n)}\\
\eea

The last statement follows by differentiation under the integral sign.

\end{proof}

{\bf Corollary 2.2.2}
If $u\in A(\Omega)$, then $u\in \mathcal C^\infty(\Omega)$ and all derivatives of $u$ are also analytic.

We use the following multiindex notation: 
We write $\alpha=(\alpha_1,\dots,\alpha_n)$ for nonnegative integers $\alpha_j.$ We call $\alpha$ a multiorder. Set $\alpha!:=\alpha_1!\cdots \alpha_n!.$
We define $\partial^\alpha=(\frac{\partial}{\partial z_1})^{\alpha_1}\cdots (\frac{\partial}{\partial z_n})^{\alpha_n},\overline\partial^\alpha=(\frac{\partial}{\partial \overline{z}_1})^{\alpha_1}\cdots (\frac{\partial}{\partial \overline{z}_n})^{\alpha_n}.$

{\bf Theorem 2.2.3}
 If $K$ is a compact subset of $\Omega$ and $K\subset \omega \subset\subset
\Omega$, then there exist contants $C_\alpha$ for all multiorders $\alpha$ so that
if $u\in A(\Omega)$ then
$$
(2.2.3)\;\;\sup_K |\partial^\alpha u|\leq C_\alpha \|u\|_{L^1(\omega)}.
$$

\begin{proof}
Assume  first that $K=\overline{D}^n(w,r), \omega=D^n(w,r')$ and $\Omega=D^n(w,r'')$
for multiradii $r=(s,\dots,s), r'=(s',\dots,s'), r''=(s'',\dots, s''), s<s'<s''.$
Let $C_j$ denote the constants in the one variable version of this theorem, Theorem 1.2.4.
Let $z\in K$. We get
\bea
|\partial^\alpha u(z)| & \leq & C_{\alpha_n} \int_{|\zeta_n-w_n|<s'}
|(\frac{\partial}{\partial z_1})^{\alpha_1}\cdots (\frac{\partial}{\partial z_{n-1}})^{\alpha_{n-1}}
u(z_1,\dots,z_{n-1},\zeta_n|)
d\lambda(\zeta_n)\\
& \leq & C_{\alpha_n} \int_{|\zeta_n-w_n|<s'}
|C_{\alpha_{n-1}}\\
& * & \int_{|\zeta_{n-1}-w_{n-1}|<s'}(\frac{\partial}{\partial z_1})^{\alpha_1}\cdots (\frac{\partial}{\partial z_{n-2}})^{\alpha_{n-2}}
u(z_1,\dots,z_{n-2},\zeta_{n-1},\zeta_n)|\\
& * & d\lambda(\zeta_{n-1})|
d\lambda(\zeta_n)\\
& \cdots &\\
& \leq & \Pi C_{\alpha_j} \int_\omega |u(\zeta)|d\lambda(\zeta).\\
\eea
 
To complete the proof we cover the compact set in the Theorem by finitely many such polydiscs.

\end{proof}

{\bf Corollary 2.2.4}
If $u_k\in A(\Omega)$ and $u_k\rightarrow u$ uniformly on compact subsets, then $u\in A(\Omega).$

\begin{proof}
This follows from the Cauchy integral formula:
$$
u_k(z)=\left(\frac{1}{2\pi i}\right)^n \int_{\partial_0D}\frac{u_k(\zeta_1,\dots,\zeta_n)d\zeta_1\dots d\zeta_n}{(\zeta_1-z_1)\cdots (\zeta_n-z_n)}
$$
We take limits on both sides. Then it follows that $u$ is analytic on $D.$ 
\end{proof}

{\bf Corollary 2.2.5}
If $u_k\in A(\Omega)$ and is a uniformly bounded sequence on any compact subset of $\Omega,$
then there is a subsequence $u_{k_j}$ which converges uniformly on compact subsets to a limit $u\in A(\Omega).$

\begin{proof}
By Theorem 2.2.3 the first derivatives of the $u_k$ are uniformly bounded on compact subsets. Hence, by Ascoli, there is a subsequence $u_{k_j}$ which converges uniformly on compact subsets to a limit $u$. By the previous corollary, $u\in A(\Omega).$
\end{proof}

Let $a_{\alpha}(z)$ be holomorphic functions in $\Omega.$ We say that $\sum_{\alpha}a_\alpha$ converges normally if $\sum_\alpha \sup_K |\alpha(z)|$ converges for each compact subset of $\Omega.$ In this case the sum $\sum_\alpha a_{\alpha}(z)$ is a holomorphic function on $\Omega.$

{\bf Theorem 2.2.6}
Suppose that $u$ is analytic in a polydisc $D(0,r), r=(r_1,\dots,r_n)$. Then
$$
u(z)=\sum_\alpha \frac{\partial^\alpha u(0)}{\alpha!} z^\alpha
$$ for every $z\in D$ and the convergence is normal.

\begin{proof} We use the formula for the sum of a geometric series for $\z\in \partial_0D$
and $z\in D.$
$$
\frac{1}{(\zeta_1-z_1)\cdots (\z_n-z_n)}=\sum_\alpha \frac{z^\alpha}{\z^\alpha \z_1\cdots \z_n}.
$$
The series converges normally in $D.$ If $u\in \mathcal C^1(\overline{D})$ then we get from 2.2.1 that

$$
u(z)=\left(\frac{1}{2\pi i}\right)^n\sum_\alpha \left(\int_{\z\in \partial_0D} \frac{u(\z)}{\z^\alpha \z_1\cdots \z_n}d\z_1\cdots d\z_n\right)z^\alpha.
$$

We also obtain by differentiation of (2.2.1) that
$$
(2.2.3)\;\;\partial^\alpha u(0)=\left(\frac{1}{2\pi i}\right)^n\alpha!\int_{\z\in \partial_0D} \frac{u(\z)}{\z^\alpha \z_1\cdots \z_n}d\z_1\cdots d\z_n
$$
Hence the theorem follows if $u\in\mathcal C^1(\overline{D}).$
To prove it in general, prove it for any strictly smaller polydisc.

\end{proof}

We obtain also from (2.2.3) the following

{\bf Theorem 2.2.7} (Cauchy's inequalities)
If $u$ is analytic on the polydisc $D(0,r)$ and if $|u|\leq M$, then
$$
|\partial^\alpha u(0)| \leq M \alpha! r^{-\alpha}.
$$

For the proof apply (2.2.3) to any smaller polydisc.

2.5. Domains of holomorphy\\

If $\Omega\subset \mathbb C$ and $p\in \partial \Omega, $ then the function $\frac{1}{z-p}$ cannot be extended analytically from $\Omega$ across $p.$ We express this fact by calling $\Omega$ a domain of holomorphy.

In $\mathbb C^n,n>1$ it might be sometimes possible to extend holomorphic functions past the boundary. For example, let $\Omega=\mathbb C^n\setminus \overline{D}(0,r), r=(s,\dots, s).$ Then all holomorphic functions on $\Omega$ extend to holomorphic functions on all of $\mathbb C^n.$
To see this, use the Cauchy integral formula in the last variable:

$g(z_1,\cdots,z_{n-1},z_n)=\frac{1}{2\pi i}\int_{|\z|= S}\frac{f(z_1,\dots, z_{n-1},\z)d\z}{\z-z_n}.$
Here we use any number $S>s.$ It is clear that if some $|z_j|>s$ for $j>n,$ the function $g= f$.
Also the integral defines an analytic function in $\mathbb C^n.$ We see then that this defines an anlytic extension to all of $\mathbb C^n.$ For this reason we will not call this domain a domain of holomorphy. The precise definition of domain of holomorphy is a little complicated. The reason is seen from the example $f=\sqrt{z}$ which is well defined in the complement of the set $\{x+i0,x\geq 0\}.$
This function can locally be extended across the boundary at any $x+iy,x>0.$ But these local extensions dont agree with the function as defined on the other side of the real axis.
We now give the precise definition of domain of holomorphy.

{\bf Definition 2.5.1}
An open set in $\mathbb C^n$ is called a domain of holomorphy if there are no open sets $\Omega_1$ and $\Omega_2$ in $\mathbb C^n$ with the following properties:\\
(a) $\emptyset \neq \Omega_1\subset \Omega_2\cap \Omega.$\\
(b) $\Omega_2$ is connected and not contained in $\Omega.$\\
(c) For every $u\in A(\Omega)$ there is a function $u_2\in A(\Omega_2)$ 
such that $u=u_2$ in $\Omega_1.$

{\bf Definition 2.5.2}
If $K$ is a compact subset of $\Omega$, we define the $A(\Omega)$ hull $\hat{K}_\Omega$ of $K$ by
$$
(2.5.1)\;\; \hat{K}_\Omega= \{z\in \Omega; |f(z)|\leq \sup_K|f| \;\mbox{if}\;f\in A(\Omega)\}.
$$

{\bf Exercises}\\

1) Let $\Omega$ be an open set in $\mathbb C.$ For $K$ compact in $\Omega$ show that
$\hat{K}_\Omega$ is compact and that the distance of $\hat{K}_\Omega$ to the boundary of $\Omega$ is the same as the distance of $K$ to the boundary.\\

2) Let $K\subset \Omega\subset \mathbb C$ be a compact subset. Let $U$ be a connected component of $\mathbb C\setminus K.$ Show that $U\subset \hat{K}_\Omega$ if and only if
$U$ is a bounded set and $U\subset \Omega.$\\

3) Show that a polydisc in $\mathbb C^n$ is a domain of holomorphy.\\

\newpage

\section{Hormander 2.5, 2.6}

{\bf Lemma 2.5.3}
Let $f\in A(\Omega).$ Let $K$ be a compact subset of $\Omega.$ Let $d_K$ denote the sup of the numbers $r$ so that $\Delta^n(z,(r,\cdots,r))\subset \Omega$ for  all $z\in K.$ Then, if $w\in \hat{K}_\Omega$
then the power series of $f$ around $w$ converges normally in $\Delta^n(w,(r,\cdots,r)).$

\begin{proof}
Fix any $0<r<d.$ Then $f$ is uniformly bounded by some constant $M$ on all
polydiscs $\Delta^n(z,(r,\dots,r))$ when $z\in K.$ It follows from Theorem 2.2.7
that for any $z\in K$ and any multiindex $\alpha,$
$|\partial^\alpha f(z)|\leq M\alpha!r^{-\alpha}.$ Then these inequalties must also hold at every point $w\in \hat{K}_\Omega.$

But then this implies that the power series expansion of $f$ around $w$ converges normally in
$\Delta(w,(r,\dots,r)).$ 
\end{proof}

{\bf Theorem 2.5.4}
Let $\Omega$ be a domain of holomorphy. 
Let $K$ be a compact subset. Then if $d(L)$ denotes the sup of all radii $r$ so that
$B(z,r)\subset \Omega$ for all $z\in L,$, then $d(K)=d(\hat{K}).$

\begin{proof}
If we instead measure boundary distance using polydiscs of multiradius $(r,\dots,r)$ then
the result holds by Lemma 2.5.3. By scaling in each variable, we see that
if $D$ is any polydisc such that $z+D\subset \Omega$ for all $z\in K,$ then
also $w+D\subset \Omega$ for all $w\in \hat{K}.$

Next we can choose any orthonormal basis for $\mathbb C^n$ and define polydiscs in these coordinates. Then we see that the result also holds for such polydiscs.
Next let $B(r)$ be a ball such that $z+B(r)\subset \Omega$ for all $z\in K,$ then since the ball is a union of polydiscs included rotated polydiscs, we see that $w+B(r)\subset \Omega$ for all $w\in \hat{K}.$
\end{proof}

{\bf Theorem 2.5.5}
If $\Omega$ is an open set in $\mathbb C^n,$ then the following conditions are equivalent:\\
(i) $\Omega$ is a domain of holomorphy.\\
(ii) If $K$ is a compact subset of $\Omega,$ then $\hat{K}$ is a compact subset of $\Omega$ and $d(\hat{K})=d(K).$ \\
(iii) If $K$ is a  compact subset of $\Omega,$ then $\hat{K}$ is also a compact subset of $\Omega.$\\
(iv) There exists a function $f\in A(\Omega)$ such that it is not possible to find $\Omega_1$ and $\Omega_2$ satisfying (a) and (b) in Definition 2.5.1 and $f_2\in A(\Omega_2)$ so that $f=f_2$ in $\Omega_1.$

\begin{proof}
Notice that if $K\subset D$ for some polydisc, then $\hat{K}$ is also contained in $D.$ Suppose that (i)
holds and that $K$ is a compact subset of $\Omega.$ Then by Theorem 2.5.4, $\hat{K}$ is a closed set in $\mathbb C^n.$ Since $\hat{K} $ is also bounded, it follows that $\hat{K}$ is a compact subset of $\Omega.$
Moreover, it follows from Theorem 2.5.4 that $d(\hat{K})=d(K).$ Hence (i) implies (ii). It is clear that (ii) implies (iii) and that (iv) implies (i).

Hence it only remains to show that (iii) implies (iv).

Let $D$ be a polydisc. For each $\z\in \Omega$, let $D_\z=\z+rD$ denote the largest polydisc contained in $\Omega.$ Let $M$ be a countable dense subset of $\Omega.$

We need a lemma:

{\bf Lemma 2.5.5a}
Suppose that $\Omega$ satisfies (iii). Then there exists a holomorphic function $f$ on $\Omega$
such that for each $\z\in M$, there is no holomorphic function $g$ defined on some neighborhood $U$ of
$\overline{D}_\z$ such that $f=g$ on $D_\z.$

Proof of Lemma 2.5.5a:\\
Let $\z_n$ be a list of the points in $M$ such that each point in $M$ is listed infinitely many times.
Let $K_1\subset \cdots \subset K_n\subset \cdots$ be compact subsets of $\Omega$ such that each compact set in $\Omega$ is contained in some $K_n.$ Since $\hat{K}_j$ is a compact subset of $\Omega$, there exists a point $z_j\in D_{\z_j} \setminus \hat{K}_j.$ Hence there is a function $f_j\in A(\Omega)$ so that $f_j(z_j)=1$ and $\sup_{K_j}|f_j|<2^{-j}.$ We can choose $f_j$ so that $f_j$ is not identically $1$ in any connected component of $\Omega.$

Let
$$f=\Pi_{j=1}^\infty (1-f_j)^j.
$$

Since the sum $\sum j2^{-j}$ is convergent, this infinite product converges to an analytic function which does not vanish identically on any connected component of $\Omega.$
All derivatives of $f$ up to order $j$ vanish at $z_j.$ Hence there can be no analytic function $g$ defined on any neighborhood $U$ of any $\overline{D}_{\z_j}$ agreeing with $f$ on $D_{\z_j}.$

This finishes the proof of Lemma 2.5.5a. We next continue with the proof that (iii) implies (iv).
Let $\Omega_1,\Omega_2$ be any two open sets satisfying (a) and (b) in Definition 2.5.1.
We assume that there is a holomorphic function $f_2$ on $\Omega_2$ such that
$f_2=f$ on $\Omega_1.$ We will show that this leads to a contradiction.

We can find a curve $\gamma(t)\in \Omega_2, 0\leq t\leq 1$ so that $\gamma(0)\in \Omega_1$ and
$\gamma(1)\in \partial \Omega, \gamma([0,1)\subset \Omega.$ By analytic continuation, $f=f_2$ on an open set containing $\gamma([0,1)).$ We then get a contradiction to the conclusion of Lemma 2.5.5a
by chosing a point $\z_j$ in this neighborhood very close to $\partial \Omega$ since $\Omega_2$ will contain a polydisc centered at $\gamma(1).$

\end{proof}

2.6 Pseudoconvexity and plurisubharmonicity.\\

{\bf Definition 2.6.1}
A function defined in an open set $\Omega\subset\mathbb C^n$ with values in $[-\infty,\infty)$
is called plurisubharmonic if\\
(a) $u$ is upper semicontinuous.\\
(b) For arbitrary $z$ and $w$ in $\mathbb C^n$, the function $\tau\rightarrow u(z+\tau w)$
is subharmonic in the part of $\mathbb C$ where it is defined.

{\bf Theorem 2.6.2}
A function $u\in \mathcal C^2(\Omega)$ is plurisubharmonic if and only if 
$$
(2.6.1)\;\; \sum_{j,k=1}^n \frac{\partial^2 u(z)}{\partial z_j\overline{z}_k}w_j\overline{w}_k\geq 0,
z\in \Omega, w\in \mathbb C^n.
$$

\begin{proof}
By Corollary 1.6.10a and Lemma 1.6.11c, a $\mathcal C^2$ function defined on an open set in $\mathbb C$ is
subharmonic if and only if $\Delta u\geq 0.$
We calculate:
\bea
\frac{\partial u(z+\tau w)}{\partial \overline{\tau}} &  = & \sum_{k=1}^n \frac{\partial u}{\partial \overline{\z}_k}
\overline{w}_k\\
\frac{\partial^2 u(z+\tau w)}{\partial \tau\partial \overline{\tau}} &  = &
 \sum_{j=1}^n\sum_{k=1}^n \frac{\partial^2 u}{\partial \z_j\partial \overline{\z}_k}w_j
\overline{w}_k.\\
\eea
The Theorem follows.
\end{proof}

{\bf Theorem 2.6.3}
Let $0\leq \phi\in \mathcal C^\infty_0(\mathbb C^n)$ be equal to $0$ when $|z|>1.$
Let $\phi$ depend only on $|z_1|,\dots,|z_n|,$ and assume that $\int \phi d\lambda=1$ where $d\lambda$ is the Lebesgue measure. If $u$ is plurisubharmonic in $\Omega,$ it follows that
$$
u_\epsilon(z) = \int u(z-\epsilon \z)\phi(\z)d\lambda(\z)
$$
is plurisubharmonic, that $u_\epsilon\in \mathcal C^\infty$ in $\Omega_\epsilon$, and that $u_\epsilon \searrow u.$ ( We assume that $u$ is not identically $ -\infty$ on any connected component of $\Omega.$)

\begin{proof}
That $u_\epsilon \searrow $ when $\epsilon \searrow 0$ was proved in Lemma 1.6.11e in the case $n=1.$
To show this when $n>1$, choose first $\epsilon'=(\epsilon_1,\dots,\epsilon_n)$ and define

$$
u_{\epsilon'}(z) = \int u(z_1-\epsilon_1 \z_1,\cdots,z_n-\epsilon_n\z_n)\phi(\z)d\lambda(\z)
$$
The one variable result implies that this expression decreases when we decrease only one of the $\epsilon_i.$ Hence repeating this process $n$ times show that $u_{\epsilon^1}\geq u_{\epsilon^2}$
if $\epsilon^1\geq \epsilon^2.$
From Theorem 1.6.3 it follows that
\bea
u_\epsilon(z) & = & \int u(z-\epsilon \z)\phi(\z)d\lambda(\z)\\
& = & \int u(z_1-\epsilon \z_1,\cdots,z_n-\epsilon\z_n)\phi(\z)d\lambda(\z_1)\dots d\lambda(\z_n)\\
& \geq & \int u(z_1-\epsilon \z_1,\cdots,z_{n-1}-\epsilon\z_{n-1},z_n)\phi(\z)d\lambda(\z_1)\dots d\lambda(\z_{n-1})\\
& \geq & \int uz_1-\epsilon \z_1,\cdots,z_{n-1}-\epsilon\z_{n-2},z_{n-1},z_n))\phi(\z)d\lambda(\z_1)\dots d\lambda(\z_{n-2})\\
& \dots & \\
& \geq & u(z)\\
\eea

By upper semicontinuity of $u$ it follows that $\limsup_{\epsilon\rightarrow 0}u_\epsilon \leq u.$
Hence $u_\epsilon \searrow u$ when $\epsilon \searrow 0.$
To show that $u_\epsilon$ is plurisubharmonic, we fix a complex line
$\tau\rightarrow z+\tau w$ and show that $u_\epsilon (z+\tau w)$ is subharmonic as a function of $\tau.$
By theorem 1.6.3 it suffices to find for each $\tau_0$ such that $z+\tau_0 w\in \Omega$ arbitrarily small $r>0$ so that $u_\epsilon(z+\tau_0w)\leq \frac{1}{2\pi}\int_0^{2\pi} u_\epsilon(z+(\tau_0+re^{i\theta})w)d\theta.$
We calculate:
\bea
& & \frac{1}{2\pi}\int_0^{2\pi} u_\epsilon(z+(\tau_0+re^{i\theta})w)d\theta\\
 & = &
\frac{1}{2\pi}\int_0^{2\pi} (\int\int u(z+(\tau_0+re^{i\theta})w-\epsilon \z)\phi(\z)d\lambda(\z))d\theta\\
& = &
 \int\int\phi(\z) (\frac{1}{2\pi}\int_0^{2\pi}u(z+(\tau_0+re^{i\theta})w-\epsilon \z))d\theta d\lambda(\z))\\
& \geq & \int\int\phi(\z) u(z+\tau_0w-\epsilon \z) d\lambda(\z))\\
& = & u_\epsilon (z+\tau_0 w)
\eea

\end{proof}

{\bf Definition 2.6.6}
If $K$ is a compact subset of $\Omega\subset \mathbb C^n,$ we define the $P(\Omega)$ hull $\hat{K}^P_\Omega$ of $K$ by
$$
\hat{K}^P_\Omega=\{z\in \Omega; u(z)\leq \sup_Ku\;\forall\;u\in P(\Omega)\}.
$$

Since $|f|\in P(\Omega)$ for all $f\in A(\Omega)$ we have that
$\hat{K}^P_\Omega \subset \hat{K}_\Omega.$

{\bf Theorem 2.6.5}
Any of the following two conditions imply that $-\log d(z,\Omega^c)$ is plurisubharmonic and continuous:\\
(a) $\Omega$ is a domain of holomorphy.\\
(b) $\hat{K}^P_\Omega\subset\subset\Omega$ whenever $K$ is a compact subset of $\Omega.$

\newpage

\section{Hormander 2.6, 4.1-Banach spaces}

\begin{proof} In the case (a) we will use Theorem 2.5.5 which implies that $\hat{K}_\Omega$ is a compact subset of $\Omega$ whenever $K$ is a compact subset of $\Omega.$\\

Pick a unit vector $\xi$. We define a distance in the $\xi$ direction:
For $z\in \Omega,$ we set $d_\xi(z):=\sup \{t; z+\tau \xi\in \Omega, \forall \; \tau\in \mathbb C,
|\tau|<t\}.$ We show that $-\log (d_\xi)$ is plurisubharmonic. Assume not. Then there is a complex line $L$ and a disc $\overline{D}\subset L$ so that $d_\xi$ has value at the center strictly larger than the average value on the boundary.
We can assume that $L$ is the $z_1$ axis and $D$ is the unit disc. We can choose a holomorphic polynomial $P(z)$ with $h=\Re (P(z))$ so that $h>-\log d_\xi$ on the boundary of the disc and
$h(0)<-\log d_\xi(0).$ Consider the complex discs $D_t=D_t(\z)$ for $t\in \mathbb C, |t|\leq 1$ and for
$\zeta\in \mathbb C, |\zeta|\leq 1,$
$D_t(\zeta)=(\zeta,0,\dots,0)+ t\xi e^{-P(\zeta)}.$
If $\zeta$ is on the boundary of the unit disc, we have that $-\log d_{\xi}(\zeta)<h(\zeta)$ and hence
$|t e^{-P(\zeta)}|=|t|e^{-h(\zeta)}< e^{\log d_\xi (\zeta)}=d_\xi(\zeta).$ It follows that boundaries of the discs $D_t$
are all in $\Omega.$ We will let $K$ be the compact union of all these boundaries. Then $\hat{K}_P^\Omega \subset \subset \Omega$ or $\hat{K}_\Omega$ is compact in $\Omega.$ For $t=0$ the interior of the disc is in $\Omega.$ 
 For those $t$ for which the whole disc is in $\Omega$,
we have then that the distance from the disc to the boundary of $\Omega$ has a uniform lower bound. Hence
no such disc gets too close to the boundary. So it follows that also for all $t,|t|\leq 1$ the disc $D_t$ is in $\Omega.$
Let $\zeta=0, |t|\leq 1.$ Then $D_t(0)=t\xi e^{-P(0)}\in \Omega.$ Hence 
$|e^{-P(0)}|< d_\xi(0),$ so $-\log d_\xi(0)< h(0)$, a contradiction.\\
To finish the proof note that $-\log d$ is the sup of all $-\log d_\xi$ and that this
is continuous.
\end{proof}

{\bf Theorem 2.6.7}
If $\Omega$ is a proper open subset of $\mathbb C^n,$ the following conditions are equivalent:\\
(i) $-\log d(z,\Omega^c)$ is plurisubharmonic in $\Omega.$\\
(ii) There exists a continuous plurisubharmonic function $u$ in $\Omega$ such that
$$
\Omega_c=\{z\in \Omega; u(z)\} \subset \subset \Omega
$$
for every $c\in \mathbb R.$\\
(iii) $\hat{K}^P_\Omega \subset \subset \Omega$ for all compact sets $K\subset \Omega.$

\begin{proof}
To prove that (i) implies (ii), we define $u(z)=\|z\|^2-\log d(z,\Omega^c).$ Then if $z_n$ is a sequence in $\Omega$ converging to $\partial \Omega,$ (including $\infty$ if $\Omega$ is unbounded,)
then $u(z_n)\rightarrow \infty.$\\
(ii) obviously implies (iii).\\
That (iii) implies (i) follows from Theorem 2.6.5.

\end{proof}

{\bf Definition 2.6.8}
The domain $\Omega$ is called pseudoconvex if $\Omega=\mathbb C^n$ or if the equivalent conditions in Theorem 2.6.7 are fulfilled.\\

We have shown that domains of holomorphy are pseudoconvex. The Levi problem asks if pseudoconvex domains are domains of holomorphy. This is solved in Chapter IV and is major result in this course.\\

{\bf Theorem 2.6.11}
Let $\Omega$ be a pseudoconvex open set in $\mathbb C^n$, let $K$ be a compact subset of $\Omega$, and $\omega$ an open neighborhood of $\hat{K}^P_\Omega.$ Then there exists a function $u\in \mathcal C^\infty(\Omega)$ such that \\
(a) $u$ is strictly plurisubharmonic, that is the hermitian form in (2.6.1) is strictly positive definite for every $z\in \Omega,$ i.e. $\geq c(z)\|w\|^2, c>0.$\\
(b) $u<0$ in $K$, but $u>0$ in $\Omega \setminus \omega.$\\
(c) $\{z\in \Omega; u(z)<c\} \subset \subset \Omega$ for all $c\in \mathbb R.$\\

{\bf Lemma 2.6.11a}
There exists a continuous plurisubharmonic function $v(z)$ satisfying (b) and (c).\\

We prove first the Lemma.
By theorem 2.6.7 (ii) there exists a continuous plurisubharmonic function $u_0$ on $\Omega$ satisfying (c). We can assume that $u_0<0$ on $K$ by subtracting a constant if necessary.
Set $K'=\{z\in \Omega; u_0(z)\leq 2\}$ and let $L=\{z\in \Omega\setminus \omega; u_0\leq 0\}.$
If $L$ is empty, we choose $v=u_0$. Then (b) is also satisfied. So we assume that $L$ is nonempty.
By continuity of $u_0$, the sets $K'$ and $L\subset K'$ are compact. Hence $K'\cup L\subset \Omega_\delta$ for all small enough $\delta.$\\

Let $p\in L.$ Then $p\in \omega^c$ and hence $p\notin \hat{K}^P_\Omega.$ Therefore there
exists a plurisubharmonic function $w_p$ on $\Omega$ such that $w_p(p)>0$ and $w_p<0$ on $K.$
Using $(w_p)_\epsilon$ as in Theorem 2.6.3, we get for small enough $\epsilon$ a smooth plurisubharmonic function in $\Omega_\epsilon$ so that $(w_p)_\epsilon<0$ on $K$ and
$(w_p)_\epsilon >0$ on some open neighborhood $U_{\epsilon,p}$ of $p$.
By compactness we can cover $L$ by finitely many such neighborhoods, $U_{\epsilon_j,p_j}.$
Let $w=\sup \{(w_{p_i})_{\epsilon_i}$ on $\Omega_{\max_j \epsilon_j}\}.$
Then $w$ is continuous and plurisubharmonic, $w>0$ on $L$ and $w<0$ on $K.$ In particular we can assume that $w$ is plurisubharmonic in a neighborhood of $K'.$
Let $C$ denote the maximum value of $w$ on $K'.$ Since $L\subset K'$, $C>0.$
Note then that if $1<u_0(z)<2$, then $w\leq C<Cu_0$, hence
$Cu_0=\max\{w,Cu_0\}$ on the set $1<u_0<2.$ Hence the function $v(z)=\max\{w,Cu_0\}$
on $\{u_0<2\}$ and $v(z)=Cu_0$ when $u_0>1$ is well defined on all of $\Omega$ and is locally plurisubharmonic, hence plurisubharmonic (see Corollary 1.6.5).
We then have a continuous plurisubharmonic function on $\Omega$ which satisfies (b) and (c).
This finishes the proof of Lemma 2.6.11.a

We now prove the Theorem.
\begin{proof}
Let $v$ be as in Lemma 2.6.11a.
For any $c\in \mathbb R,$ let $\Omega_c:=\{z\in \Omega; v(z)<c\}.$ We use the notation of Theorem 
2.6.3. Set
$$
v_j(z)=\int_{\Omega_{j+1}} v(\z)\phi((z-\z)/\epsilon_j) \epsilon_j^{-2n}d\lambda(\z)+\epsilon_j |z|^2,j=0,1,\dots.
$$

We choose $\epsilon_j$ small enough that $v_0,v_1<0$ on $K$ and $v_j<v+1$ in $\Omega_j$.
We also have that $v_j\in \mathcal C^\infty(\mathbb C^n)$ and we can arrange that $v$ is strictly plurisubharmonic and $>v$ in some neighborhood of $\overline{\Omega}_j$.

Let $\chi$ denote a $\mathcal C^\infty$ convex increasing function on $\mathbb R$ such that
$\chi(t)=0, t\leq 0$ and $\chi'(t)>0, t>0.$ Let $j\geq 1:$
On $\overline{\Omega}_j\setminus \Omega_{j-1},$ $v_j+1-j>v+1-j\geq (j-1)+1-j=0$, where the first inequality holds on $\overline{\Omega}_j$ and the second inequality holds in the complement of $\Omega_{j-1}.$
Hence $\chi(v_j+1-j)$ is strictly plurisubharmonic in a neighborhood of $\overline{\Omega}_j\setminus \Omega_{j-1}.$ 
Let $u_m=v_0+\sum_1^m a_j\chi(v_j+1-j).$
Then $u_0$ is strongly plurisubharmonic on a neighborhood of $\overline{\Omega}_0.$
Since $\chi(v_1+1-1)$ is plurisubharmonic on $\Omega_1$ and strongly plurisubharmonic
on a neighborhood of $\overline{\Omega}_1\setminus \Omega_0$, the function $u_1$ will
be strictly plurisubharmonic in a neighborhood of $\overline{\Omega}_1$ if we choose $a_1$ large enough. We also choose $a_1$ large enough so that $u_1>v$ on $\overline{\Omega}_1.$ Similarly, we can next choose $a_2$ large enough that $u_2$ is strongly plurisubharmonic in a neighborhood of $\overline{\Omega}_2.$ Inductively we see that $u_m$ can be chosen strongly plurisubharmonic in a neighborhood of $\overline{\Omega}_m$ and also larger than $v$ there. Notice that $u_0<0$ on $K.$
Also $v_1+1-1<0$ on $K$, so $u_1=v_0<0$ on $K.$ Finally
also observe that for $j\geq 2, v_j+1-j< v+1+1-j<(j-2)+2-j=0$ on $\Omega_{j-2}.$ Therefore all $u_m=v_0<0$ on $K$. Moreover on any $\Omega_j, $ if $m>j+2,$ $u_m=u_j$, so the infinite sum is locally finite, so the limit exists and is strongly plurisubharmonic on $\Omega.$

\end{proof}

\newpage

We next move on to Chapter 4 in Hormander: $L^2$ estimates and existence theorems for the $\overline{\partial}$ operator.\\

We start with some functional analysis.
Let $G_1,G_2$ denote two complex Banach spaces with norms $\|\cdot\|_1, \|\cdot\|_2$ respectively.
Let $E$ denote a complex subspace of $G_1$, not necessarily closed. 
We will consider linear maps $T:E\rightarrow G_2$.  Let $G_3=G_1\times G_2$ denote the product space
with Banach norm $\|(x,y)\|_3^2=\|x\|_1^2+\|y\|_2^2.$
We say that $T$ is closed if the graph of $T$, $G_T=\{(x,Tx); x\in E\}$ is a closed subspace of $G_3.$

{\bf Example 4.0.1}
$G_1=G_2=L^2(0,1)$ and $Tx=x'.$ Here the derivative is in the sense of distributions.
So $Tx(\phi)=-\int x \phi'$. Here $E$ consists of those $L^2$ functions $x$ for which $Tx$ is an $L^2$ function. Then $T$ is a closed operator: If $(x_n,Tx_n)$ converge to $(x,y)$ then
for any test function $\phi$ we have $\int Tx_n \phi=-\int x_n \phi'$ so taking limit one gets
$\int y\phi=-\int x\phi'.$ Therefore $y$ equals $x'$ in the sense of distributions.\\

Let $G_i'$ denote the dual Banach space of $G_i$.  So an element $y\in G_i'$ is a continuous linear function (also called functional) from $G_i$ to $\mathbb C, x\rightarrow y(x).$ Moreover there is a constant $C_y$ so that
$|y(x)|\leq C_y\|x\|_i$. The smallest such constant $C_y$ is denoted by $\|y\|_i.$ \\

An important theorem is the Hahn-Banach Theorem. 

{\bf Theorem 4.0.2, Hahn-Banach}
Let $L\subset G$ be a linear subspace of a Banach space. Suppose that $\phi:L\rightarrow \mathbb C$
be a linear function with bounded norm, i.e. $|\phi(x)|\leq C\|x\|.$ Then $\phi$ extends
to a linear function $\tilde{\phi}:G\rightarrow \mathbb C$ such that $\tilde{\phi}(x)=\phi(x)$ for all $x\in L$ and $|\tilde{\phi}(x)|\leq C\|x\|$ for all $x\in G.$ In particular the extension belongs to
$G'.$\\

\section{Hormander 4.1-Banach spaces}

We will next add the hypothesis that the operator $T$ is densely defined, i.e. the subspace $E$ is dense. We will define the adjoint operator $T^*:D_{T^*}\rightarrow G_1'$ where $D_{T^*}\subset G_2'$
is a linear subspace. We say that $y$ belongs to $D_{T^*}$ if there exists a constant $C$ so that  $|y(Tx)|\leq C\|x\|_1$ for all $x\in E.$ Hence $y\rightarrow y(Tx)$ extends to a continuous linear functional $z$ on $\overline{E}=G_1$ such that $\|z\|_{G_1'}\leq C.$
We set $T^*(y)=z.$

If $(y_1,y_2)\in G_1'\times G_2'$ then this defines a continuous linear functional on $G_1\times G_2$
by $(y_1,y_2)(x_1,x_2)=y_1(x_1)-y_2(x_2).$ 

Let $G_T^\perp\subset G_1'\times G_2'$ denote those $(y_1,y_2)$ for which
$(y_1,y_2)(x_1,x_2)=0$ for all $(x_1,x_2)\in G_T.$
Clearly $G_T^\perp$ is a closed subspace of $G'_1\times G'_2.$

{\bf Lemma 4.0.3}
$G_T^\perp = G_{T^*}.$\\

\begin{proof}
Suppose first that $(y_1,y_2)\in G_T^\perp.$ Then if $x\in D_T$ we have that
$y_1(x)-y_2(Tx)=0.$ This implies that $|y_2(Tx)|=|y_1(x)|\leq \|y_1\| \|x\|.$ Hence $y_2$ satisfies the requirement to be in $D_{T^*}.$ Moreover $y_1$ satisfies the equirement to equal $T^*(y_2).$ Hence $(y_1,y_2)\in G_{T^*}.$ 

Suppose next that $(y_1,y_2)\in G_{T^*}.$ Hence $y_1=T^*(y_2).$ This implies that for any $x\in D_T$, we have that $y_1(x)=y_2(Tx).$ Hence $(y_1,y_2)\in G_T^\perp.$
\end{proof}

{\bf Corollary 4.0.4}
The graph of $T^*$ is closed.\\

If $G$ is a complex Banach space, we define $G''$ to be the dual of $G'.$ There is a natural isometric embedding $\phi$ of $G$ into $G''$: For $x\in G,$ define $\phi(x)(y)=y(x)$ for $y\in G_1'$. Then $\phi(x)\in G_1''.$ Also $\|\phi(x)\|\leq \|x\|$ and by the Hahn Banach theorem you have equality: Given $x\neq 0$, choose a linear function $\tilde{y}$ on $\mathbb C x$ by $\tilde{y}(x)=\|x\|$ and extend to $G$ by Hahn-Banach. Call the extension $y.$ Then $y\in G'$  and $\|y\|=1.$ Now $\phi(x)(y)=y(x)=\|x\|=\|x\|\|y\|$ so $\|\phi(x)\|\geq \|x\|.$ Hence $\|\phi(x)\|=\|x\|.$
We say that $G''=G$ if this map is surjective. The Banach space is called reflexive in this case.
In this case, we also have that $G'=G'''.$ 
Note that if $G_1,G_2$ are Banach spaces, then the dual of $G_1\times G_2$ equals $G_1'\times G_2'$.
Namely, if $\phi$ is a continuous linear function on $G_1\times G_2$, then
$\phi(x_1,x_2)=\phi(x_1,0)+\phi(0,x_2)=y_1(x_1)+y_2(x_2).$\\

{\bf Lemma 4.0.5}
Let  $G$ be a reflexive  Banach space,  i.e. $\phi(G)=G'',$ and let $H$ be a closed subspace of $G.$ Then $(H^\perp)^\perp=\phi(H).$ \\

\begin{proof}
Since $\phi$ is an isometry and $H$ is closed, $\phi(H)$ is also closed. This follows because
Banach spaces are complete.
Suppose that $x\in H$ and $y\in H^\perp.$ Then $y(x)=0$ and hence $\phi(x)(y)=y(x)=0.$
Hence $\phi(x)\in H^{\perp\perp}.$ Hence $\phi(H)\subset H^{\perp\perp}.$ Suppose next that
$z\in G''\setminus\phi(H).$ 
Hence there exists an $\eta\in G'''$ so that $\eta(z)\neq 0$ while $\eta$ vanishes on $\phi(H).$
By reflexivity, we have that there exists a $y\in G'$ so that $z(y)=\eta(z)\neq 0$ while $w(y)=\eta(w)=0$ for all $w\in \phi(H).$ Hence if $x\in H$ and $w=\phi(x)\in \phi(H),$ then $y(x)=(\phi(x))(y) =w(y)=0.$ So, $y\in H^\perp.$ Since $z(y)\neq 0,$ it follows that $z$ cannot be in $H^{\perp\perp},$ so $H^{\perp\perp}\subset \phi(H).$
\end{proof}

{\bf Corollary 4.0.6}
If $G_1,G_2$ are reflexive and $T:G_1\rightarrow G_2$ is a densely defined closed linear operator,
then $T^{**}=T.$\\

\begin{proof}
Set $G_j''=\phi_j(G_j).$ Then
$$
G_{T^{**}}=(G_{T^*})^\perp=G_T^{\perp\perp}=\{(\phi_1(x),\phi_2(Tx));(x,Tx)\in G_T\}.
$$
We write this imprecisely as $G_{T^{**}}=G_T,$ or $T^{**}=T.$
\end{proof}

{\bf Lemma 4.0.7}
Assume that $G_1,G_2$ are reflexive Banach spaces. 
Then the  operator $T^*$ is densely defined.\\

\begin{proof}
Suppose that there exists a $y_0\in G_2'\setminus \overline{D}_{T^*}.$ 
Then there exists a $z_0\in G_2''$ so that $z_0(y_0) \neq 0$ while
$z_0(y)=0$ for all $y\in D_{T^*}.$ So $z_0\neq 0.$ Hence for the point $(0,z_0)\in G_1''\times G_2''$
we have that $0(T^*y)-z_0(y)=0$ for all $y\in D_{T^*}.$ 
It follows that $(0,z_0)\in G_{T^*}^\perp=G_T^{\perp\perp}.$ We write $(0,z_0)=(\phi_1(0),\phi_2(x))$ for some
$(0,x)\in G_1\times G_2.$ Then $(0,x)\in G_T.$ But $T(0)=0$ and $x$ cannot be zero since $\phi_2(x)=z_0\neq 0$, a contradiction.
\end{proof}

We recall the uniform boundedness principle (Banach-Steinhaus theorem).

{\bf Theorem 4.0.8}
Let $\mathcal F$ denote a family of continuous linear functionals on a Banach space $B$.
 Suppose that for every $x\in B$ there exists a constant $c_x$ so that $|F(x)|\leq 
c_x\|x\|_B$ for all $F\in \mathcal F.$ Then there exists a constant $C$ so that $\|F\|\leq C$ for all $F\in \mathcal F.$

The proof uses Baire category. For any number $A$ the set of $
x$ where $c_x\leq A$ 
is closed. (Closedness follows from continuity of the functionals.) Hence for some $A$ the set has interior.

Notation: Let $y\in G'$ and let $H\subset G$ be a linear subspace.
We denote by $\|y_{|H}\|_{G'}$ the norm of the linear functional $y$ restricted to $H.$
So $\|y_{|H}\|_{G'}$ is the smallest $c$ so that $|y(x)|\leq c\|x\|$ for all $x\in H.$

We next give a more general version of Theorem 4.1.1 in Hormander. 

{\bf Theorem 4.1.1'}
Let $G_1,G_2$ be reflexive Banach spaces and let $T:G_1\rightarrow G_2$ be a densely defined
closed linear operator.  Let $F\subset G_2$ be a closed subspace containing the range of $T, R_T.$
Then $F=R_T$ if and only if there exists a constant $C>0$ such that
$$
(4.1.1)' \;\;\;\|y_{|F}\|_{G_2'} \leq C\|T^*y\|_{G'_1}\; \forall \; y\in  D_{T^*}.
$$
If any of the two equivalent conditions are satisfied, then there exists for every $z\in F$
an $x\in D_T$ with $Tx=z$ and $\|x\|_{G_1}\leq C\|Tx\|_{G_2}=C\|z\|_{G_2}$ for the same constant $C.$

\begin{proof}
Suppose that $R_T=F.$ We will apply the Banach-Steinhaus theorem to a family of linear
functionals on the Banach space $F.$ Namely, let $\mathcal G$ denote the family of $y\in
D_{T^*}$ for which $\|T^*y\|_{G_1'} \leq 1.$
Define for each such $y$ a linear function $L_y\in\mathcal F$ on $F$ given by $L_y(x)=y(x).$ This is a continuous linear functional defined on $F.$ For a given $x\in F$, pick some $z\in D_T$ for which $x=Tz.$
Then we have that for any $L_y\in \mathcal F,$ that
$$
|L_y(x)|=|y(x)|= |y(Tz)|= |(T^*(y))(z)|\leq \|T^*(y)\|_{G_1'} \|z\|_{G_1}\leq \|z\|_{G_1}.
$$
Hence the family $\mathcal F$ is bounded uniformly on any given $x\in F.$
Hence by the Banach-Steinhaus Theorem, there is a constant $C$ so that
$\|(L_y)_{|F}\|_{G_2'}\leq C$ for any $y\in D_{T^*}$ with $\|T^*y\|_{G'_1}\leq 1.$ Then it follows that
$\|(L_y)_{|F}\|_{G_2'} \leq C\|T^*y\|_{G'_1}\; \forall \; y\in  D_{T^*}$. Since $(L_y)_{|F}=y(x),x\in F$ we get that $\|y_{|F}\|_{G_2'}\leq C\|T^*y\|_{G_1'}$ for all $y\in D_{T^*}.$

We next suppose that (4.1.1)' is satisfied. Fix a $z\in F.$ If $y\in D_{T^*}, w=T^*(y)\in R_{T^*},$ set
$\phi(w)=y(z).$ Note that if $w=T^*(y_1)=T^*(y_2),$ then $T^*(y_1-y_2)=0.$ Hence
by (4.1.1)', $(y_1-y_2)(z)=0$, so $y_1(z)=y_2(z)$ and therefore $\phi(w)$ is well defined.  Also, by (4.1.1)', 
\bea
|\phi(w)| & = & |y(z)|\\
& \leq & \|y_{|F}\|_{G_2'}\|\|z\|_{G_2}\\
& \leq & C\|T^*y\|_{G_1'}\|z\|_{G_2}\\
& = & C\|w\|_{G_1'}\|z\|_{G_2},
\eea
hence $\phi$ is a bounded linear functional with norm at most $C\|z\|_{G_2}.$
 This is then a bounded linear function on the Range of $T^*$ in $G_1'$ with norm $\leq C\|z\|_{G_2}.$
We extend $\phi$ to all of $G_1'$ using the Hahn-Banach theorem. Then $\phi\in G_1''$ with norm
$\|\phi\|_{G_1''}\leq C\|z\|_{G_2}.$

Recall the definition of $*$ as it applies in this situation. We say that $\phi\in G_1''$ belongs to
$D_{(T^*)^*}\subset G_1''$ if there exists a constant $c$ so that $|\phi(T^*y)|\leq c\|y\|_{G_2'}$
for all $y\in D_{T^*}.$ Since $|\phi(T^*y)|=|y(z)|\leq \|z\|_{G_2}\|y\|_{G_2'}$ and our $z$ is fixed,
it follows that $\phi\in D_{T^{**}}\subset G_1''.$
By
reflexivity there is an $x\in G_1$ with norm $\leq C\|z\|$ such that $u(x)=\phi(u)$ for all $u\in G_1'.$ Moreover $x\in D_T.$ So whenever $y\in D_{T^*},$ 
$$
y(z)=\phi(T^*y)=(T^*(y))(x)=y(Tx).
$$
So we have shown that for any fixed $z\in F,$ there is an $x\in D_T, \|x\|_{G_2}\leq C \|z\|_{G_1}$ so that
$y(z)=y(Tx)$ for all $y\in D_{T^*}.$
Since $D_{T^*}$ is dense in $G_2',$ it follows that $y(z-Tx)=0$ for all $y\in G_2'.$ 
By the Hahn-Banach theorem this implies that $z-Tx=0.$
\end{proof}

Let $N_T$ denote the nullspace of $T.$ Clearly $N_T$ is contained in $D_T$ and $N_T$ is a closed subspace.

Next we study the case of three reflexive Banach spaces, $G_1,G_2,G_3.$ Also we consider closed, densely defined linear operators, $T:G_1\rightarrow G_2$ and $S:G_2\rightarrow G_3$
satisfying the condition that $T(G_1)\subset N_S.$ Hence $S\circ T$ is defined on $D_T$ and
$S\circ T \equiv 0.$
We call this for short an $S,T$ system.\\

{\bf Definition 4.1.1a}
An $S,T$ system satisfies the Basic Estimate if there exists a constant $C$
 so that for every $y\in D_{T^*}$ and every $u\in D_S$ we have that
$$
(4.1.5)'\;\:|y(u)|\leq C(\|T^*(y)\|_{G_1'}\|u\|_{G_2}+\|y\|_{G_2'}\|S(u)\|_{G_3}).
$$

{\bf Theorem 4.1.1b}
If we have an $S,T$ system satisfying the basic estimate, then $T(G_1)=N_S.$\\

\begin{proof}
The space $F=N_S$ satisifes the condition of Theorem 4.1.1', namely, $F$ is a closed subspace of $G_2$ and it contains $R_T.$ We apply the Basic estimate to $y\in D_{T^*}$ and $u\in F.$ Then
$|y(u)|\leq C\|T^*(y)\|_{G_1'}\|u\|_{G_2}$ since $S(u)=0.$ This is the estimate (4.1.1)' in Lemma 4.1.1'. The theorem follows.
\end{proof}

\section{Hormander 4.1-$L^p$ spaces}

We introduce some Banach spaces. Let $\Omega$ be an open subset of $\mathbb C^n.$
Let $1<r,s<\infty, \frac{1}{r}+\frac{1}{s}=1.$
If $\phi$ is a continuous real function on $\Omega,$ we define 
$$
L^r(\Omega,\phi)=\{f:\Omega\rightarrow \mathbb C, \int_\Omega |f|^re^{-\phi}d\lambda
=:\|f\|^r_{L^r(\Omega,\phi)}<\infty\}.
$$
Here $d\lambda$ is Lebesgue measure and $f$ is assumed to be measurable and locally in $L^r,$
$f\in L^r_{loc}(\Omega).$ We define similarly $L^s(\Omega,\phi).$ We know that
the dual of $L^r(\Omega,\phi)$ is $L^s(\Omega,\phi)$ and vice versa. In particular these spaces are reflexive. We have for $f\in L^r(\Omega,\phi)$ and $g\in L^s(\Omega,\phi)$ that
$g(f)=\int_\Omega f\overline{g}e^{-\phi}.$ We write $g(f)=<f,g>_{\phi}$
and get $<f,g>_{\phi}=\overline{<g,f>}_{\phi}.$ Also we have $|g(f)|\leq \|f\|_{L^r(\Omega,\phi)}
\|g\|_{L^s(\Omega,\phi)}.$

We can do the same for $(p,q)$ forms.

Let $L^r_{(p,q)}(\Omega,\phi)$ denote the space of
forms of type $(p,q)$ with coefficients in $L^r(\Omega, \phi).$
$$
f=\sum'_{|I|=p}\sum'_{|J|=q} f_{I,J}dz^I\wedge d\overline{z}^J.
$$
where $\sum'$ refers to summing over strictly increasing multiindices.
We set $|f|^r=\sum'_{I,J}|f_{I,J}|^r$ and  $\|f\|^r_\phi=\int |f|^re^{-\phi}.$
Then the dual of $L^r_{(p,q)}(\Omega,\phi)$ is $L^s_{(p,q)}(\Omega,\phi)$ and
we have $<f,g>_\phi= \int \sum'_{I,J} \int f_{I,J}\overline{g}_{IJ}e^{-\phi}.$
Set $f\cdot g=\sum_{I,J} f_{I,J}\overline{g}_{I,J}$ for the pointwise product.

Similarly we define $L^r_{(p,q)}(\Omega)_{loc}$. Let $D(\Omega)$ denote the space of $\mathcal C^\infty$ functions on $\Omega$ with compact support in $\Omega.$ Similarly we define
$D_{(p,q)}(\Omega).$ We observe that $D_{(p,q)}(\Omega)$ is dense in $L^r_{(p,q)}(\Omega,\phi).$

Let $1<r',s'<\infty$ with $\frac{1}{r'}+\frac{1}{s'}=1.$
Let $\phi_1,\phi_2$ be continuous functions on $\Omega.$
Consider the operator $\overline{\partial}$. This gives rise to a linear densely defined
closed operator
$$
T:L^r_{(p,q)}(\Omega,\phi_1)\rightarrow L^{r'}_{(p,q+1)}(\Omega,\phi_2).
$$
An element $u\in L^r_{(p,q)}(\Omega,\phi_1)$ is in $D_T$ if $\overline{\partial}u$, defined in the sense of distributions belongs to $L^{r'}_{(p,q+1)}(\Omega,\phi_2)$ and then we set $Tu=\overline{\partial }u.$
The operator is densely defined since it is defined on $D_{(p,q)}(\Omega).$ The closedness is as in Example 2.1. Our goal is to show that the range of $T$ consists of all those $f$ for which $\overline{\partial} f=0$ for some choices of $\phi_j $  and $r,r'$.

Consider the case of 3 Banach spaces, 
\bea
G_1 &  = & L^{r_1}_{(p,q)}(\Omega,\phi_1),\\
G_2 & = & L^{r_2}_{(p,q+1)}(\Omega,\phi_2),\\
 G_3 & = & L^{r_3}_{(p,q+2)}(\Omega,\phi_3)\\
\eea
 with $\overline{\partial}$ operators $T:G_1\rightarrow G_2$ and $S:G_2\rightarrow G_3.$ This is then an $(S,T)$ system and our
goal is to prove the Basic Estimate under suitable conditions.

We will assume that $1<r_3\leq r_2\leq r_1<\infty.$

{\bf Lemma 4.1.$3_a$}
Let $\eta_\nu$ be a sequence of $\mathcal C^\infty$ functions with compact support in $\Omega.$
Suppose that $0\leq \eta_\nu\leq 1$ and that on any given compact subset of $\Omega$ we have $\eta_\nu=1$ for all large $\nu.$ Suppose that  
$$
(4.1.6)'\;\; e^{-\phi_3}\sum_{k=1}^n |\partial \eta_\nu/\partial \overline{z}_k|^{r_3}\leq e^{-r_3\phi_2/r_2}.
$$
If $r_3<r_2$ we add the extra condition that $\Omega$ has a finite volume. Then for every $f\in D_S$ the sequence $\eta_\nu f\rightarrow f$ in $G_2$. Moreover $\eta_\nu f\in D_S$
and $S(\eta_\nu f) \rightarrow S(f)$ in $G_3.$ \\

\begin{proof}
The sequence $|\eta_\nu f|\leq |f|$ and $\eta_\nu f$ converges pointwise to $f$,  so $\int |\eta_\nu f-f|^{r_2}e^{-\phi_2}d\lambda \rightarrow 0$ by the Lebesgue dominated convergence theorem.

We have that $S(\eta_\nu f)=\eta_\nu S(f)+\overline\partial \eta_\nu\wedge f$ in the sense of distributions and $S(f)\in L^{r_3}(\phi_3)$ so to show that $\eta_\nu f\in D_S$ we need to show that
$\overline{\partial} \eta_\nu\wedge f\in L^{r_3}(\phi_3).$
We have the pointwise estimate that $|\overline\partial \eta_\nu\wedge f|^{r_3} e^{-\phi_3}\leq
|f|^{r_3}e^{-r_3\phi_2/r_2}.$ We show that the right side is an $L^1$ function on $\Omega.$
If so, $\overline{\partial} \eta_\nu\wedge f\in L^{r_3}(\phi_3)$ and the Lebesgue dominated convergence theorem implies that the integral converges to $0.$ 
If $r_2=r_3$ the function is in $L^1$ by the hypothesis that $f\in G_2.$ Suppose that $r_2>r_3.$
We then get if $\frac{r_3}{r_2}+ \frac{1}{t}=1,$
\bea
\int_\Omega |f|^{r_3}e^{-r_3\phi_2/r_2} & = &  \left(\int_\Omega (|f|^{r_3}e^{-r_3\phi_2/r_2})^{r_2/r_3}\right)^{r_3/r_2} (\int_\Omega d\lambda)^{1/t}\\
& = &  \left(\int_\Omega |f|^{r_2}e^{-\phi_2}\right)^{r_3/r_2} (\int_\Omega d\lambda)^{1/t}\\
& < & \infty\\
\eea

Then the lemma follows since $\eta_\nu S(f)\rightarrow S(f)$ in $G_3.$
\end{proof}

{\bf Lemma 4.1.$3_b$}
 Let $\eta_\nu$ be a sequence of $\mathcal C^\infty$ functions with compact support in $\Omega.$
Suppose that $0\leq \eta_\nu\leq 1$ and that on any given compact subset of $\Omega$ we have $\eta_\nu=1$ for all large $\nu.$  Suppose that  
$$
(4.1.6)''\;\;e^{-\phi_2}\sum_{k=1}^n |\partial \eta_\nu/\partial \overline{z}_k|^{r_2}\leq e^{-r_2\phi_1/r_1}.
$$
If $r_2<r_1$ we add the extra condition that $\Omega$ has a finite volume. 
Then for every $f\in D_T^*$ the sequence $\eta_\nu f\rightarrow f$ in $G_2'$. Moreover $\eta_\nu f\in D_{T^*}$
and $T^*(\eta_\nu f) \rightarrow T^*(f)$ in $G_1'.$ \\

\begin{proof}
Suppose that $f\in D_{T^*}.$ By the Lebesgue dominated convergence theorem, $\eta_\nu f\rightarrow f$ in $G_2'.$ We show that $\eta f\in D_{T^*}$ if $\eta$ is smooth with compact support. Let $u\in D_T.$ Then $Tu\in G_2$ and
\bea
(\eta f)(Tu) & = & <Tu,{\eta f}>_{\phi_2}\\
& = & <\eta Tu,{f}>_{\phi_2}\\
& = & <T(\eta u)-\overline{\partial}\eta \wedge u,{f}>_{\phi_2}\\
& = & <T(\eta u),{f}>_{\phi_2}-<\overline{\partial}\eta \wedge u,{f}>_{\phi_2}\\
& = & <\eta u,{T^*f}>_{\phi_1}-<\overline\partial \eta \wedge u,{f}>_{\phi_2}\\
& = & <u,{\eta T^*f}>_{\phi_1}-<\overline\partial \eta \wedge u,{f}>_{\phi_2}\\
\eea

So 
\bea
|(\eta f)(Tu)| & \leq & \|\eta T^*f\|_{G_1'}\|u\|_{G_1}+\|f\|_{G_2'}\left( \int |\overline{\partial} \eta \wedge u|^{r_2} e^{-\phi_2}\right)^{1/r_2}\\
& \leq & \| T^*f\|_{G_1'}\|u\|_{G_1}+\|f\|_{G_2'}\left( \int |\overline{\partial} \eta \wedge u|^{r_2} e^{-\phi_2}\right)^{1/r_2}\\
& \leq & \| T^*f\|_{G_1'}\|u\|_{G_1}+\|f\|_{G_2'}\left( \int |u|^{r_2} (\sum_j \frac{{\partial} \eta}{\partial \overline{z}_j}|^{r_2} )  e^{-\phi_2}\right)^{1/r_2}\\
& \leq & \| T^*f\|_{G_1'}\|u\|_{G_1}+C_\eta\|f\|_{G_2'}\left( \int |u|^{r_2} e^{-r_2\phi_1/r_1}\right)^{1/r_2}\\
& & \mbox{where}\;C_\eta=1\; \mbox{for}\;\eta=\eta_\nu\\
\eea

If $r_1=r_2$ we see that

\bea
|(\eta f)(Tu)|&  \leq  & \| T^*f\|_{G_1'}\|u\|_{G_1}+C_\eta\|f\|_{G_2'}\left( \int |u|^{r_1} e^{-\phi_1}\right)^{1/r_1}\\
&  =  & \left(\| T^*f\|_{G_1'}+C_\eta\|f\|_{G_2'}\right) \|u\|_{G_1}\\
\eea

So then $\eta f\in D_{T^*}.$ Next, consider the case $r_1>r_2.$

\bea
|(\eta f)(Tu)|&  \leq  & \| T^*f\|_{G_1'}\|u\|_{G_1}+C_\eta\|f\|_{G_2'}\left( \int |u|^{r_2} e^{-r_2\phi_1/r_1}\right)^{1/r_2}\\
& = & \| T^*f\|_{G_1'}\|u\|_{G_1}\\
& + & C_\eta\|f\|_{G_2'}\left(\left( \int \left(|u|^{r_2} e^{-r_2\phi_1/r_1}\right)^{r_1/r_2}\right)^{r_2/r_1}|\Omega|^{1/t'}\right)^{1/r_2}\\
& \leq & \| T^*f\|_{G_1'}\|u\|_{G_1}+C_\eta\|f\|_{G_2'}|\Omega|^{1/(r_2t')}\left( \int |u|^{r_1} e^{-\phi_1}\right)^{1/r_1}\\
&  =  & \left(\| T^*f\|_{G_1'}+C_\eta\|f\|_{G_2'}|\Omega|^{1/(r_2t')}\right) \|u\|_{G_1}\\
\eea

This shows that $\eta f\in D_{T^*}$ also in the case when $r_1>r_2.$ It remains to show that $T^*(\eta_\nu f)\rightarrow T^* f.$
It suffices to show that $T^*(\eta_\nu f)-\eta_\nu T^*f\rightarrow 0.$ Let $u\in D_T.$
Then
\bea
(T^*(\eta_\nu f)-\eta_\nu T^*f)u & = & <u,{T^*(\eta_\nu f)-\eta_\nu T^*f}>_{\phi_1}\\
& = & <Tu,{\eta_\nu f}>_{\phi_2}-<u,{\eta_\nu T^*f}>_{\phi_1}\\
& = & <\eta_\nu Tu,{ f}>_{\phi_2}-<u,{\eta_\nu T^*f}>_{\phi_1}\\
& = & < T(\eta_\nu u)-\overline{\partial} \eta_\nu\wedge u,{f}>_{\phi_2}-<u,{\eta_\nu T^*f}>_{\phi_1}\\
& = & <\eta_\nu u,{ T^*f}>_{\phi_1}-<\overline{\partial} \eta_\nu\wedge u,{f}>_{\phi_2}-<u,{\eta_\nu T^*f}>_{\phi_1}\\
& = & -<\overline{\partial} \eta_\nu\wedge u,{f}>_{\phi_2}\\
\eea

Hence
$$
|(T^*(\eta_\nu f)-\eta_\nu T^*f)u|\leq \int |f||\overline\partial \eta_\nu \wedge u|e^{-\phi_2}.
$$

So in particular we have for any $u\in D_{p,q}$ that
$$
|\int (T^*(\eta_\nu f)-\eta_\nu T^*f)\cdot u e^{-\phi_1}|\leq \int |f||\overline\partial \eta_\nu| | u|e^{-\phi_2}.
$$

This implies the pointwise a.e. estimate
$$
|(T^*(\eta_\nu f)-\eta_\nu T^*f) e^{-\phi_1}|\leq  |f||\overline\partial \eta_\nu| e^{-\phi_2}.
$$

So
\bea
|(T^*(\eta_\nu f)-\eta_\nu T^*f)| & \leq  & |f||\overline\partial \eta_\nu| e^{\phi_1-\phi_2}\\
|(T^*(\eta_\nu f)-\eta_\nu T^*f)|^{s_1}e^{-\phi_1} & \leq  & |f|^{s_1}|\overline\partial \eta_\nu|^{s_1} e^{s_1(\phi_1-\phi_2)}e^{-\phi_1}\\
& = & |f|^{s_1}\left(\sum_k |\frac{\partial \eta_{\nu}}{\partial \overline{z}_k}|\right)^{s_1} e^{s_1(\phi_1-\phi_2)}e^{-\phi_1}\\
& \leq & |f|^{s_1}\left(n\sum_k |\frac{\partial \eta_{\nu}}{\partial \overline{z}_k}|^{r_2}\right)^{s_1/r_2} e^{s_1(\phi_1-\phi_2)}e^{-\phi_1}\\
& \leq & |f|^{s_1}\left(ne^{-r_2\phi_1/r_1}e^{\phi_2}\right)^{s_1/r_2} e^{s_1(\phi_1-\phi_2)}e^{-\phi_1}\\
& = & |f|^{s_1}n^{s_1/r_2}e^{\phi_1(-s_1/r_1+s_1-1)}e^{\phi_2(s_1/r_2-s_1)}\\
& = & |f|^{s_1}n^{s_1/r_2}e^{\phi_1(-s_1(1-1/s_1)+s_1-1)}e^{\phi_2(s_1(1-1/s_2)-s_1)}\\
& = & |f|^{s_1}n^{s_1/r_2}e^{-s_1\phi_2/s_2}\\
\eea

Since the functions 
$|(T^*(\eta_\nu f)-\eta_\nu T^*f)|$ converge pointwise to zero, it suffices by the dominated convergence theorem  to show that $|f|^{s_1}n^{s_1/r_2}e^{-s_1\phi_2/s_2}$ is an $L^1$ function.
If $s_1=s_2$ this follows since $f\in L^{s_2}_{(p,q+1)}(\Omega,\phi_2).$
If $s_1<s_2$ it follows also because

\bea
\int |f|^{s_1}e^{-s_1\phi_2/s_2} & \leq & \left(\int \left(|f|^{s_1}e^{-s_1\phi_2/s_2} \right)^{s_2/s_1}\right)^{s_1/s_2}|\Omega|^{1/t''}\\
& < & \infty\\
\eea

\end{proof}

Next  we will study smoothing.\\

We will use Minkowski's integral inequality. See Stein, Elias (1970). Singular integrals and differentiability properties of functions. Princeton University Press.

{\bf Theorem 4.1.$4_0$}
Let $F(x,y)\geq 0$ be a measurable function on the product of two measure spaces
$S_1,S_2$ with positive measures $d\mu_1(x),d\mu_2(y)$ respectively.
Let $1\leq r<\infty.$
Then
$$
\left(\int_{S_1} \left(\int_{S_2} F(x,y) d\mu_2(y) \right)^rd\mu_1(x)\right)^{1/r}
\leq \int_{S_2} \left(\int_{S_1} F^r(x,y) d\mu_1(x) \right)^{1/r}d\mu_2(y)
$$

The following is the smoothing theorem.

{\bf Lemma 4.1.$4_a$}
Let $\chi$ be a smooth function with compact support in $\mathbb R^N,$ with $\int \chi(x)dx=1$
and set $\chi_\epsilon(x)=\frac{1}{\epsilon^N}\chi(\frac{x}{\epsilon}).$ Let $1\leq r<\infty.$ 
If $g\in L^r(\mathbb R^N)$ then the convolution $g*\chi_\epsilon $ satisfies
\bea
(g*\chi_\epsilon)(x) & := & \int_{\mathbb R^N} g(y)\chi_\epsilon(x-y)dy\\
& = & \int g(x-y)\chi_\epsilon (y) dy\\
& = & \int g(x-\epsilon y)\chi(y)dy\\
\eea
and is a $\mathcal C^\infty$ function such that $\|g*\chi_\epsilon -g\|_{L^r}\rightarrow 0$
when $\epsilon\rightarrow 0.$
The support of $g*\chi_\epsilon$ has no points at distance $>\epsilon$ from the support of $g$
if the support of $\chi$ lies in the unit ball.\\

\begin{proof}
The equalities for $g*\chi_\epsilon(x)$ are obvious. The first integral shows that $g*\chi_\epsilon$
is $\mathcal C^\infty$ since $g$ is in $L^1_{loc}$ and since we can differentiate under the integral sign.
We apply Minkowski's integral inequality to the second integral.

\bea
\left(\int |g*\chi_\epsilon|^rdx\right)^{1/r} & = & \left( \int |\int g(x-y)\chi_\epsilon(y)dy|^rdx\right)^{1/r}\\
& \leq & \left( \int \left(\int |g(x-y)||\chi_\epsilon(y)|dy\right)^rdx\right)^{1/r}\\
& \leq &  \int \left(\int \left[|g(x-y)||\chi_\epsilon(y)|\right]^rdx\right)^{1/r}dy\\
& = & \int |\chi_\epsilon(y)| \|g\|_{L^r}\\
& = & C\|g\|_{L^r}, C:=\int|\chi|\\
\eea

This shows that $g*\chi_\epsilon\in L^r$ and that
$$
\|g*\chi_\epsilon\|_{L^r}\leq C\|g\|_{L^r}.
$$

Next pick a $\delta>0$ and choose a continuous function $h$ with compact support so that
$\|g-h\|_{L^r}<\delta.$
We then get

\bea
\|g*\chi_\epsilon-g\|_{L^r} & \leq & \|g*\chi_\epsilon-h*\chi_\epsilon \|_{L^r}+
\|h*\chi_\epsilon-h\|_{L^r}+\|h-g\|_{L^r}\\
& \leq & \|(g-h)*\chi_\epsilon\|_{L^r}+
\|h*\chi_\epsilon-h\|_{L^r}+\delta\\
& \leq & (C+1)\delta+
\|h*\chi_\epsilon-h\|_{L^r}\\
\eea

We have that 
$$
(h*\chi_\epsilon-h)(x) = \int (h(x-\epsilon y)-h(x))\chi(y)dy.
$$

Since $h$ is continuous with compact support and $\chi$ has compact support, it follows that
$h*\chi_\epsilon-h$ is supported in a ball $\|x\|\leq R$ for $\epsilon<1$ and converges uniformly to $0$ when $\epsilon\rightarrow 0.$ It follows that $g*\chi_\epsilon \rightarrow g$ in $L^r.$

The last assertion follows from the last of the integrals in the expression for $g*\chi_\epsilon.$
\end{proof}

{\bf Exercises}\\

1) We know that the following holds for functions. Show that it also holds for (p,q) forms. 
$$
|<f,g>_\phi|\leq |f|_r |g|_s.
$$

2) Let $T$ be the operator $\overline{\partial}: L^2_{p,q}(\Omega)\rightarrow L^2_{p,q+1}(\Omega).$ Show that $T$ is a closed and densely defined operator.\\

3) Consider 3 Banach spaces, 
\bea
G_1 &  = & L^{r_1}_{(p,q)}(\Omega,\phi_1),\\
G_2 & = & L^{r_2}_{(p,q+1)}(\Omega,\phi_2),\\
 G_3 & = & L^{r_3}_{(p,q+2)}(\Omega,\phi_3)\\
\eea
 with $\overline{\partial}$ operators $T:G_1\rightarrow G_2$ and $S:G_2\rightarrow G_3.$
where the $\phi_i$ are continuous functions and $1<r_i<\infty.$ Show that the range ot $T$ is in the domain of $S$. Show that $S\circ T=0.$

\section{Hormander 4.1-$L^p,L^2$ spaces}

{\bf Lemma 4.1.$4_b$}
Let $f_1,\dots,f_N\in L^1_{loc}(\mathbb R^N)$. Also suppose that the distribution
$\sum_{j=1}^N \frac{\partial f_j}{\partial x_j}\in L^1_{loc}.$
Then
$$
\sum_{j=1}^N \frac{\partial (f_j*\chi_\epsilon)}{\partial x_j}=\left(\sum_{j=1}^N \frac{\partial f_j}{\partial x_j}\right)*\chi_\epsilon.$$

\begin{proof}
Both sides are $\mathcal C^\infty$ functions. To show that they are equal, we show that for any $\phi\in \mathcal C^\infty_0$ that
$$\int \sum_{j=1}^N \frac{\partial (f_j*\chi_\epsilon)}{\partial x_j}(x) \phi(x)dx=
\int \left(\sum_{j=1}^N \frac{\partial f_j}{\partial x_j}\right)*\chi_\epsilon(x)\phi(x)dx.
$$

\bea
\int \left(\sum_{j=1}^N \frac{\partial f_j}{\partial x_j}\right)*\chi_\epsilon(x)\phi(x)dx & = & 
\int \left(\int \left(\sum_{j=1}^N \frac{\partial f_j}{\partial x_j}\right)(x-y)\chi_\epsilon(y)dy\right)\phi(x)dx\\
& =  & \int \chi_{\epsilon}(y) \left(\int \left(\sum_{j=1}^N \frac{\partial f_j}{\partial x_j}\right)(x-y)\phi(x)dx\right) dy\\
& =  & -\int \chi_{\epsilon}(y) \left(\int \sum_{j=1}^N f_j(x-y)\frac{\partial \phi}{\partial x_j}(x)dx\right) dy\\
& =  & -\int \left(\int \sum_{j=1}^N \frac{\partial \phi}{\partial x_j}(x)f_j(x-y)\chi_\epsilon(y)dy\right) dx\\
& =  & -\int \sum_{j=1}^N \frac{\partial \phi}{\partial x_j}(x)(f_j*\chi_\epsilon)(x)dx\\
& = & \int \sum_{j=1}^N \frac{\partial (f_j*\chi_\epsilon)}{\partial x_j}(x) \phi(x)dx\\
\eea

\end{proof}

{\bf Lemma 4.1.$3_c$}
Let $f\in D_S$ have compact support in $\Omega.$ Then $f*\chi_\epsilon\rightarrow f$ in $G_2$,
$f*\chi_\epsilon \in D_S$ and $S(f*\chi_\epsilon)\rightarrow Sf$ in $G_3.$

\begin{proof}
By the smoothing theorem, $f*\chi_\epsilon\rightarrow f$ in $L^{r_2}$ and since $f$ has compact support, $f*\chi_\epsilon\rightarrow f$ in $\|\cdot\|_{G_2}.$ Since $f*\chi_\epsilon$ is smooth with compact support, $f*\chi_\epsilon\in D_S.$ Furthermore, $Sf$ is a form such that each coefficient can be written as an expression $\sum_j \frac{\partial f_j}{\partial x_j}$ where each $f_j$ is a
finite linear combination of the coefficients of $f.$ [Recall that $\frac{\partial f_{I,J}}{\partial \overline{z}_j}=\frac{1}{2}\frac{\partial f_{I,J}}{\partial x_j}+\frac{i}{2}\frac{\partial f_{I,J}}{\partial y_j}$.] Hence, by Lemma 4.1.$4_b$,  $(Sf)*\chi_\epsilon=S(f*\chi_\epsilon).$ By the
smoothing theorem $(Sf)*\chi_\epsilon\rightarrow Sf$ in $L^{r_3}_{loc}.$ Since $f$ has compact support, $\phi_3$ is bounded so $(Sf)*\chi_\epsilon \rightarrow Sf$ in $G_3$. Therefore
$S(f*\chi_\epsilon)\rightarrow Sf$ in $G_3.$

\end{proof}

Next we prove a similar lemma for $T^*.$ We first do some preparations. In this lemma $\phi_1,\phi_2$ are continuous functions on $\Omega.$

{\bf Lemma 4.1.$3_d$}
Let $f=\sum'_{|I|=p}\sum'_{|J|=q+1}f_{I,J}dz^I\wedge d\overline{z}^J
\in L^{s_2}_{(p,q+1)}(\Omega,\phi_2)$ and suppose that $f\in D_{T^*}.$
Let $T^*(f)=\sum'_{I,K}g_{I,K}dz^I\wedge d\overline{z}^K\in L^{s_1}_{(p,q)}(\Omega,\phi_1).$
Then 
$$(4.1.9)\;\;g_{I,K}=(-1)^{p-1}e^{\phi_1}\sum_{j=1}^n \frac{\partial(e^{-\phi_2} f_{I,jK})}{\partial z_j}.$$
In particular, the distribution $\sum_{j=1}^n \frac{\partial(e^{-\phi_2} f_{I,jK})}{\partial z_j}\in L^{s_1}_{loc}.$\\

\begin{proof}
Let $u=\sum'_{|I|=p}\sum'_{|K|=q} u_{I,K}dz^I\wedge d\overline{z}^K \in D_{p,q}(\Omega).$
So in particular, $u\in D_T.$
Then $(T^*f)(u)=f(Tu).$ Here
\bea
Tu & = & \sum'_{|I|=p}\sum'_{|K|=q}\frac{\partial u_{I,K}}{\partial \overline{z}_j} d\overline{z}_j\wedge
dz^I\wedge d\overline{z}^K\\
& = & (-1)^p \sum'_{|I|=p}\sum'_{|K|=q}\frac{\partial u_{I,K}}{\partial \overline{z}_j}
dz^I\wedge
d\overline{z}_j\wedge d\overline{z}^K.\\
\eea
If $I_1$ is a permutation of $I$ and $J_1$ is a permutation of $J$ we write
$f_{I_1,J_1}=\epsilon_{I,J}^{I_1,J_1}f_{I,J}$ where $\epsilon$ is the signature of the permutation.
So, for example,  the signature is $-1$ if only two indices are interchanged.
In particular a term $f_{I,jK}=0$ if $j\in K.$
We get
$$
(T^*f)(u)=\int \sum_{I,K} u_{I,K} \overline{g}_{I,K}e^{-\phi_1}= f(Tu)=
\int (-1)^p \sum_{I,K,j}{\frac{\partial u_{I,K}}{\partial \overline{z}_j}}\overline{f_{I,jK}}e^{-\phi_2}.$$

Hence for all smooth functions $\psi$ with compact support, we have for each $I,K$ that

$$
\int \overline{g_{I,K}} e^{-\phi_1} \psi=
(-1)^{p} \int \sum_{j}\overline{f_{I,jK}}e^{-\phi_2}\frac{\partial \psi}{\partial {z}_j}.$$

Hence

$$
\int g_{I,K} e^{-\phi_1} \overline{\psi}=
(-1)^{p} \int \sum_{j}f_{I,jK}e^{-\phi_2}\frac{\partial \overline{\psi}}{\partial {\overline{z}}_j}.$$

Therefore
$$g_{I,K}e^{-\phi_1}=(-1)^{p-1}\sum_{j=1}^n \frac{\partial ( e^{-\phi_2} f_{I,jK})}{\partial z_j} \in L^{s_1}_{loc}.$$
\end{proof}

{\bf Corollary 4.1.$3_e$}
If $\phi_2\in \mathcal C^\infty(\Omega)$, then
$$
g_{I,K}=(-1)^{p} e^{\phi_1-\phi_2}\sum_j\frac{\partial \phi_2}{\partial z_j}f_{I,jK}+
(-1)^{p-1}e^{\phi_1-\phi_2}\sum_j \frac{\partial f_{I,jK}}{\partial z_j}
$$ where the distribution  $\sum_j \frac{\partial f_{I,jK}}{\partial z_j}$  is in $L^{s_1}_{loc}\supset L^{s_2}_{loc}.$\\

\begin{proof}
The distribution $\sum_j\frac{\partial (f_{I,jK}e^{-\phi_2})}{\partial z_j}\in L^{s_1}_{loc}.$
Let $\phi$ be a test function.
\bea
\int \left(\sum_j\frac{\partial (f_{I,jK}e^{-\phi_2})}{\partial z_j}\right)\phi & = &
- \sum_j \int f_{I,jK} e^{-\phi_2} \frac{\partial \phi}{\partial z_j}\\
& = &- \sum_j \int f_{I,jK}  \frac{\partial (e^{-\phi_2}\phi)}{\partial z_j}- \sum_j \int f_{I,jK} e^{-\phi_2} \frac{\partial \phi_2}{\partial z_j}\phi\\
&  = & \left( \sum_j  \frac{\partial f_{I,jK}}{\partial z_j}\right)(e^{-\phi_2}\phi)-\left(\sum_j  f_{I,jK} e^{-\phi_2} \frac{\partial \phi_2}{\partial z_j}\right)(\phi)\\
&  = & \left(e^{-\phi_2}\sum_j   \frac{\partial f_{I,jK}}{\partial z_j}\right)(\phi)-\left(e^{-\phi_2}\sum_j  f_{I,jK}  \frac{\partial \phi_2}{\partial z_j}\right)(\phi)\\
\eea

The expression on the left is in $L^{s_1}_{loc}$ and the second expression on the right is in $L^{s_2}_{loc}$. Hence the first expression on the right is in $L^{s_1}_{loc}.$

\end{proof}

{\bf Lemma 4.1.$3_f$}
Suppose that $f=\sum'_{|I|=p,|J|=q+1}f_{I,J}dz^I\wedge d\overline{z}^J\in D_{p,q+1}, \phi_2\in \mathcal C^{\infty}.$ Then
$f\in D_{T^*}.$\\

\begin{proof}
Let $g=\sum'_{I,K}g_{I,K}dz^I \wedge d\overline{z}^J$ where
$g_{I,K}=(-1)^{p-1}e^{\phi_1}\sum_{I,K,j}\frac{\partial (f_{I,jK}e^{-\phi_2})}{\partial z_j}.$
To show that $f\in D_{T^*},$ we prove that for any $u\in D_T, f(Tu)=g(u).$
We write $u=\sum'_{|I|=p}\sum'_{|K|=q}u_{I,K}dz^I\wedge d\overline{z}^K\in G_1,$ and
$$Tu= (-1)^p\sum'_{|I|=p}\sum'_{|K|=q} \frac{\partial u_{I,K}}{\partial \overline{z}_j} dz^I \wedge d\overline{z}_j\wedge d\overline{z}^K.$$ Then
\bea
f(Tu) & = & <Tu,f>_{\phi_2}\\
& = & \int (-1)^p\sum'_I\sum'_J\sum_{jK=J}\epsilon^{jK}_{J}\overline{f}_{I,J}\frac{\partial u_{I,K}}{\partial \overline{z}_j} e^{-\phi_2}\\
& = & (-1)^{p-1}\sum'_I \sum'_J \int \sum_{jK=J}\epsilon^{jK}_{J}\overline{\frac{\partial (f_{I,J}e^{-\phi_2})}{\partial z_j}}{u}_{I,K}\\
& = & (-1)^{p-1}\sum'_I \sum_{jK} \int \overline{\frac{\partial (f_{I,jK}e^{-\phi_2})}{\partial z_j}}{u}_{I,K}\\
& = & (-1)^{p-1}\sum'_I \sum'_{K} \int \left(\overline{e^{\phi_1}\sum_j\frac{\partial (f_{I,jK}e^{-\phi_2})}{\partial z_j}}\right){u}_{I,K}e^{-\phi_1}\\
& = & <u,g>\\
& = & g(u)\\
\eea

\end{proof}

{\bf Lemma 4.1.$3_g$}
Suppose that  $\phi_2\in \mathcal C^\infty.$ Let $f\in D_{T^*}$ have compact support. Then $f*\chi_\epsilon\rightarrow f$ in $G_2'$. Moreover, $f*\chi_\epsilon \in D_{T^*}$ and
$T^*(f*\chi_{\epsilon})\rightarrow T^*f$ in $G_1'.$\\

\begin{proof}
Since $f*\chi_\epsilon\in D_{p,q+1}$ for small $\epsilon,$ we have by Lemma 4.1.$3_f$ that $f*\chi_\epsilon\in D_{T^*}.$
Also by the smoothing theorem, $f*\chi_\epsilon \rightarrow f$ in $ G_2'. $
By Corollary 4.1.$3_e$ we can write $T^*(f*\chi_\epsilon)=\sum'_{I,K} g_{I,K}dz^I\wedge d\overline{z}^K$ where
$$
g_{I,K}  =  (-1)^{p}e^{\phi_1-\phi_2} \sum_j \frac{\partial \phi_2}{\partial z_j} f_{I,jK}*\chi_\epsilon\\
+(-1)^{p-1} e^{\phi_1-\phi_2}\sum_j \frac{\partial (f_{I,jK}*\chi_\epsilon) }{\partial z_j}.$$
The first term on the right converges to $(-1)^{p}e^{\phi_1-\phi_2} \sum_j \frac{\partial \phi_2}{\partial z_j} f_{I,jK}$ in $L^{s_2}$ by the smoothing Theorem. Hence it also converges in $L^{s_1}$. The second part can be written, using Lemma 4.1.$4_b$, as
$(-1)^{p-1} e^{\phi_1-\phi_2}\left(\sum_j \frac{\partial f_{I,jK}}{\partial z_j}\right) *\chi_\epsilon$
and converges to $$(-1)^{p-1} e^{\phi_1-\phi_2}\sum_j \frac{\partial f_{I,jK}}{\partial z_j} $$  in $L^{s_1}.$
\end{proof}

{\bf Theorem 4.1.$3_h$}  Suppose that $\phi_1,\phi_3$ are continuous and $\phi_2$ is $\mathcal C^\infty.$ Suppose that $\{\eta_\nu\}, 0\leq \eta_\nu\leq 1$ is a sequence of compactly supported $\mathcal C^\infty$ functions such that on any compact subset of $\Omega$ all $\eta_\nu=1$ except finitely many, as in (4.1.6)' and (4.1.6)''. Suppose also that if $r_1\neq r_3$ then $\Omega$ has bounded volume.
 Suppose that $f\in D_{T^*}$ and $g\in D_S.$ Then there exist sequences $\{f_n\},\{g_n\}\subset D_{(  p,q+1)}$ so that $f_n\in D_{T^*}, g_n\in D_S,$ $ f_n\rightarrow f$ in $G_2'$, $T^*f_n\rightarrow T^*f$ in $G_1'$, $g_n\rightarrow g$ in $G_2$ and $Sg_n\rightarrow S$ in $G_3.$

\begin{proof}
Let $\delta>0.$ Using Lemma 4.1.$3_a$ for $S$ and Lemma 4.1.$3_b$ for $T^*$, we can let $\nu_0$ be large enough that
$$
\|\eta_{\nu_0}f-f\|,\|T^*(\eta_{\nu_0}f)-T^*f\|,\|\eta_{\nu_0}g-g\|,\|S(\eta_{\nu_0}g)-Sg\|<\delta/2
$$
and $\eta_{\nu_0}f\in D_{T^*}, \eta_{\nu_0}g\in D_S.$
Then for $\epsilon>0$ small enough, $\hat{f}=(\eta_{\nu_0}f)*\chi_\epsilon$ and $\hat{g}=(\eta_{\nu_0}g)*\chi_\epsilon$ are in $D_{T^*}$ and $D_S$ respectively and 
$$
\|\hat{f}-f\|,\|T^*\hat{f}-T^*f\|,\|\hat{g}-g\|,\|S\hat{g}-Sg\| <\delta.
$$

\end{proof}

{\bf Corollary 4.1.$3_i$}
The $S,T$ system in  Theorem 4.1.$3_h$ satisfies the Basic Estimate if there is a constant $C$ so that for every
$y,u\in D_{(p,q+1)}$ thought of as elements of $G_2',G_2$ respectively, we have that
$$
|y(u)|\leq C\left( \|T^*(y)\|_{G_1'}\|u\|_{G_2}+\|y\|_{G_2'}\|S(u)\|_{G_3}\right).
$$

We now discuss the case when all $r_i,s_i=2.$ This is the Hilbert space case. Recall first a few facts about complex Hilbert spaces, $H$. We have an inner product $<x,y>$ for $x,y\in H.$ The inner product satisfies $$<ax,by>=a\overline{b}<x,y>.$$ The norm is $\|x\|^2=<x,x>.$ There is a natural identification between $H$ and the dual $H'.$ If $x\in H$, then $\lambda(x)$ defined by
$\lambda(x)(y)=<y,x>$ defines a continuous linear functional on $H.$ The map $\lambda:H\rightarrow H'$ is norm preserving and antilinear: $\lambda(cx)=\overline{c}\lambda(x).$ Conversely, if $g\in H',g\neq 0$, let $x\in N_g$, $\|x\|=1.$ Set $c=\overline{g(x)}.$ We show that $g=\lambda(cx).$
Any $y\in H$ can be written uniquely as $y=ax+z, z\in N_g.$
We get
\bea
g(y) & = & g(ax+z)\\
& = & ag(x)+g(z)\\
& = & a\overline{c}\\
& = & \overline{c}<ax+z,x>\\
& = & <y,cx>\\
& = & \lambda(cx)(y)\\
& \Rightarrow & \\
g & = & \lambda(cx)\\
\eea

In this case we can write the basic estimate as follows:

{\bf Theorem 4.1.$3_j$}
Suppose that $r=s=2$. Assume the conditions of Theorem 4.1.$3_h$. If for every $f\in G_2$ with $\lambda(f)\in D_{T^*}$ and $f\in D_S$ we have that
$$
(1)\;\;\|f\|\leq C(\|T^*f\|+\|Sf\|)
$$
for a fixed constant $C,$ independent of $f$, then we have that for all $y\in D_{T^*}, u\in D_S$ that
$$(2)\;\;\;|y(u)| \leq C(\|T^*y\|\|u\|+\|Su\|\|y\|).$$
We also have $(2)\Rightarrow (1).$\\

We note that $N_{T^*}=R_T^\perp.$ Namely, if $y\in D_{T^*}, T^*(y)=0$ and $x\in D_T,$
we have $(T^*(y))(x)=y(Tx)=0$. Conversely, if $y\in R_T^\perp$, then $y(Tx)=0$ for all $x\in D_T$, so
$y\in N_{T^*}.$ Note that we identify $R_T^\perp\subset H'$, the dual space with the orthogonal complement of $R_T\subset H,$ i.e. the vectors in $H$ perpendicular to the vectors in $R_T.$ If $y\in N_{T^*}$ and $x\in \overline{R_T}$ then $y(x)=0.$
On the other hand, if $y$ is in the subspace in $H'$ identified with $\overline{R_T}$ and $x$ is in the subspace of $H$ identified with $N_{T^*},$ we also have $y(x)=0.$

\begin{proof}
Assume (1). It suffices to prove (2) for all $y,u\in D_{p,q+1}$. We write $y=y_1+y_2$ where $\lambda^{-1}(y_1)\in \overline{R}_T$ and $y_2\in N_{T^*}.$ Similarly, we write $u=u_1+u_2$ where $u_1\in \overline{R_T}$ and $\lambda(u_2)\in N_{T^*}.$ Then $y_2(u_1)=0$ and $y_1(u_2)=0.$  Note that $y_1\in D_{T^*}$ since both $y$ and $y_2$ are. Similarly, $u_2\in D_S.$

It follows that
\bea
|y(u)| & = & |y_1(u_1)+y_2(u_1)+y_1(u_2)+y_2(u_2)|\\
& \leq  & |y_1||u_1|+|y_2||u_2|\\
& \leq & C(\|T^*y_1\|+\|Sy_1\|)\|u_1\|+\|y_2\|(\|T^*u_2\|+\|Su_2\|)\\
& = & C\|T^*y_1\|\|u_1\|+\|y_2\|\|Su_2\|\\
& =  & C\|T^*y\|\|u_1\|+\|y_2\|\|Su\|\\
& \leq  & C\|T^*y\|\|u\|+\|y\|\|Su\|\\
\eea
The reverse implication follows by applying (2) to the case $y=u\in D_{p,q}.$

\end{proof}

\section{Hormander 4.2}

We are trying to prove the basic estimate 
$$
\|f\| \leq C(\|T^*f\|+\|Sf\|).
$$
We need formulas for $\|Sf\|,\|T^*f\|.$

{\bf Definition 4.2.$1_a$}
Let $\phi\in \mathcal C^\infty(\Omega)$ be a real valued function. If $w\in \mathcal C^\infty(\Omega)$ we let
$\delta_j(w):= e^\phi \frac{\partial (we^{-\phi})}{\partial z_j}=\frac{\partial w}{\partial z_j}-w\frac{\partial \phi}{\partial z_j}.$\\

{\bf Lemma 4.2.$1_b$}
Let $w_1,w_2$ be $\mathcal C^\infty$ functions with compact support in $\Omega.$
Then $<w_1,\frac{\partial w_2}{\partial \overline{z}_k}>_{\phi}:=
\int w_1 \overline{\left(\frac{\partial w_2}{\partial \overline{z}_k}\right)}e^{-\phi}d\lambda=
\int (-\delta_kw_1)\overline{w}_2 e^{-\phi}d\lambda =:<-\delta_k w_1,w_2>_{\phi}.$\\

\begin{proof}
\bea
\int w_1 \overline{\frac{\partial w_2}{\partial \overline{z}_k}}e^{-\phi}d\lambda & = &
\int w_1  \frac{\partial \overline{w}_2}{\partial {z}_k}e^{-\phi}d\lambda\\
& = & -\int  \frac{\partial ({w}_1e^{-\phi})}{\partial {z}_k}\overline{w}_2 d\lambda\\
& = & -\int \frac{\partial {w}_1}{\partial {z}_k}\overline{w}_2 e^{-\phi}d\lambda+\int w_1\frac{\partial \phi}{\partial {z}_k}\overline{w}_2 e^{-\phi}d\lambda\\
& = & -\int \delta_kw_1\overline{w}_2 e^{-\phi} d\lambda\\
\eea

\end{proof} 

We next prove a commutation relation between $\delta_j$ and $\frac{\partial}{\partial \overline{z}_k}.$

{\bf Lemma 4.2.$1_c$}
Let $\psi$ be a smooth function. Then
$$
\delta_j (\frac{\partial \psi}{\partial \overline{z}_k})-
\frac{\partial}{\partial \overline{z}_k}(\delta_j(\psi))= \psi \frac{\partial ^2\phi}{\partial \overline{z}_k\partial z_j}.
$$

\begin{proof}

\bea
\delta_j(\psi_{\overline{z}_k})-(\delta_j\psi)_{\overline{z}_k} & = &
\psi_{\overline{z}_k,z_j}- \psi_{\overline{z}_k}\frac{\partial \phi}{\partial z_j}-
(\psi_{z_j}-\psi\frac{\partial \phi}{\partial z_j})_{\overline{z}_k} \\
& = & \psi_{\overline{z}_k,z_j}- \psi_{\overline{z}_k} \phi_{z_j}-
\psi_{z_j,\overline{z}_k}+\psi_{\overline{z}_k}\phi_{z_j} +\psi \phi_{z_j,\overline{z}_k}\\
& = & \psi \phi_{z_j,\overline{z}_k}\\
\eea

\end{proof}

{\bf Lemma 4.2.$1_d$}
Let $f,g$ be $\mathcal C^\infty$ functions with compact support in $\Omega.$
Then
\bea
\int \delta_j f\overline{\delta_k g}e^{-\phi} & = & -\int \frac{\partial (\delta_j f)}{\partial \overline{z}_k}\overline{g}e^{-\phi}\\
\eea

\begin{proof}
\bea
\int \delta_j f\overline{\delta_k g}e^{-\phi} & = & \overline{\int \delta_k g \overline{\delta_j f}  e^{-\phi}}\\
& = & -\overline{\int g\overline{  \left(\frac{\partial (\delta_j f)}{\partial \overline{z}_k }\right)}e^{-\phi}}\\
& = & -\int \overline{g}\frac{\partial (\delta_j f)}{\partial \overline{z}_k}e^{-\phi}\\
\eea

\end{proof}

The following lemma is immediate from Lemma 4.2.$1_b$:\\

{\bf Lemma 4.2.$1_e$}
Let $f,g$ be $\mathcal C^\infty$ functions with compact support. Then
\bea
\int \frac{\partial f}{\partial \overline{z}_k}\overline{\frac{\partial g}{\partial \overline{z}_j}   }e^{-\phi}
& = & -\int \delta_j(\frac{\partial f}{\partial \overline{z}_k})\overline{g}e^{-\phi}\\
\eea

{\bf Corollary 4.2.$1_f$}
Let $f,g$ be $\mathcal C^\infty$ functions with compact support. Then
\bea
\int \delta_j f\overline{\delta_k g}e^{-\phi} -\int \frac{\partial f}{\partial \overline{z}_k}
\overline{\frac{\partial g}{\partial \overline{z}_j}}e^{-\phi}
& = & \int f\overline{g}\frac{\partial^2 \phi}{\partial z_j\partial\overline{z}_k}e^{-\phi}\\
\eea

\begin{proof}
We combine Lemmas 4.2.$1_{c-e}$:
\bea
\int \delta_j f\overline{\delta_k g}e^{-\phi} -\int \frac{\partial f}{\partial \overline{z}_k}
\overline{\frac{\partial g}{\partial \overline{z}_j}}e^{-\phi}
& = & -\int \frac{\partial (\delta_j f)}{\partial \overline{z}_k}\overline{g}e^{-\phi}+
\int \delta_j(\frac{\partial f}{\partial \overline{z}_k})\overline{g}e^{-\phi}\\
& = & \int f\frac{\partial^2 \phi}{\partial z_j\partial\overline{z}_k}\overline{g}e^{-\phi}\\
\eea

\end{proof}

Let $f$ be in $D_{p,q+1}$. We calculate $Tf$.

{\bf Lemma 4.2.$1_g$.}
 If $f=\sum'_{I}\sum'_{J}f_{I,J}dz^I\wedge d\overline{z}^J$, then\\
$\overline{\partial} f = \sum'_I\sum'_J\sum_{j=1}^n \frac{\partial f_{I,J}}{\partial \overline{z}_j}
d\overline{z}_j\wedge dz^I\wedge d\overline{z}^J$ and\\
$
|\overline{\partial}f |^2= \sum'_{I,J}\sum_j |\frac{\partial f_{I,J}}{\partial \overline{z}_j}|^2
-\sum'_{I,K}\sum_{j,k}\frac{\partial f_{I,jK}}{\partial \overline{z}_k}
\overline{\frac{\partial f_{I,kK}}{\partial \overline{z}_j}}
$\\

\begin{proof}
The formula for $\overline\partial f$ is clear. We prove the second formula.
We deal first with the case $q+1=n$ which is degenerate. In this case $\overline{\partial} f=0$
for type reasons.
In the second term on the right side of the formula, we sum over all $K$ which are multiindices
of length $q$ so each $K$ misses one index, $j$. Recall that $f_{I,jK}=0$ if $j\in K.$ Hence in the last term you only sum over the case $j=k$. So this term becomes
$-\sum'_{I,K}|\frac{\partial f_{I,jK}}{\partial \overline{z}_j}|^2$ where $j$ is the missing index in $K.$
There is only one $J$ in this case and $f_{I,jK}=\epsilon^{jK}_{J}f_{I,J}.$
Since the term is squared, we can write the last sum as
$-\sum'_{I,J}\sum_j |\frac{\partial f_{I,J}}{\partial \overline{z}_j}|^2$ which is the same as the first term except for sign.

We continue by assuming that $q+1<n.$ For $M$ an increasing multiindex, $|M|=q+2\leq n$ and $j\in M,$
write $M^j$ to be the increasing multiindex with $j$ removed from $M.$  
We can then write 
\bea
\overline{\partial} f & = & \sum'_{|I|=p}\sum'_{|M|=q+2}\sum_{j\in M}\frac{\partial f_{I,M^j}}{\partial \overline{z}_j}d\overline{z}_j\wedge dz^I\wedge d\overline{z}^{M^j}\\
& = & (-1)^p\sum'_{|I|=p}\sum'_{|M|=q+2}\sum_{j\in M}\frac{\partial f_{I,M^j}}{\partial \overline{z}_j} \epsilon^{jM^j}_Mdz^I\wedge d\overline{z}^{M}\\
\eea

Hence we obtain that 

\bea
|\overline{\partial} f|^2  & = & \sum'_{|I|=p}\sum'_{|M|=q+2}\sum_{j,\ell\in M}\frac{\partial f_{I,M^j}}{\partial \overline{z}_j} \epsilon^{jM^j}_M\overline{\frac{\partial f_{I,M^\ell}}{\partial \overline{z}_\ell}} \epsilon^M_{\ell M^\ell}\\
& = & \sum'_{|I|=p}\sum'_{|M|=q+2}\sum_{j\in M}|\frac{\partial f_{I,M^j}}{\partial \overline{z}_j}|^2 \\
& + & \sum'_{|I|=p}\sum'_{|M|=q+2}\sum_{j,\ell\in M,j\neq \ell}\frac{\partial f_{I,M^j}}{\partial \overline{z}_j} \overline{\frac{\partial f_{I,M^\ell}}{\partial \overline{z}_\ell}} \epsilon^{jM^j}_M\epsilon^M_{\ell M^\ell} \\
& = & \sum'_{|I|=p}\sum'_{|J|=q+1}\sum_{j\notin J}|\frac{\partial f_{I,J}}{\partial \overline{z}_j}|^2 \\
& + & \sum'_{|I|=p}\sum'_{|K|=q}\sum_{j,\ell\notin K,j\neq \ell}\frac{\partial f_{I,lK}}{\partial \overline{z}_j} \epsilon^{lK}_{M^j}\overline{\frac{\partial f_{I,jK}}{\partial \overline{z}_\ell}}\epsilon^{ M^\ell}_{jK} \epsilon^{jM^j}_M\epsilon^M_{\ell M^\ell} \\
& = & \sum'_{|I|=p}\sum'_{|J|=q+1}\sum_{j\notin J}|\frac{\partial f_{I,J}}{\partial \overline{z}_j}|^2 \\
& + & \sum'_{|I|=p}\sum'_{|K|=q}\sum_{j,\ell\notin K,j\neq \ell}\frac{\partial f_{I,lK}}{\partial \overline{z}_j} \epsilon^{jlK}_{jM^j}\overline{\frac{\partial f_{I,jK}}{\partial \overline{z}_\ell}}\epsilon^{ \ell M^\ell}_{\ell jK} \epsilon^{jM^j}_M\epsilon^M_{\ell M^\ell} \\
& = & \sum'_{|I|=p}\sum'_{|J|=q+1}\sum_{j\notin J}|\frac{\partial f_{I,J}}{\partial \overline{z}_j}|^2 \\
& + & \sum'_{|I|=p}\sum'_{|K|=q}\sum_{j,\ell\notin K,j\neq \ell}\frac{\partial f_{I,lK}}{\partial \overline{z}_j} \overline{\frac{\partial f_{I,jK}}{\partial \overline{z}_\ell}}\epsilon^{ j\ell K}_{\ell jK} \\
& = & \sum'_{|I|=p}\sum'_{|J|=q+1}\sum_{j\notin J}|\frac{\partial f_{I,J}}{\partial \overline{z}_j}|^2 \\
& - & \sum'_{|I|=p}\sum'_{|K|=q}\sum_{j,\ell\notin K,j\neq \ell}\frac{\partial f_{I,lK}}{\partial \overline{z}_j} \overline{\frac{\partial f_{I,jK}}{\partial \overline{z}_\ell}} \\
\eea

For any fixed $I,$

\bea
\sum'_{|J|=q+1}\sum_{j\in J}|\frac{\partial f_{I,J}}{\partial \overline{z}_j}|^2 & = &
\sum'_{|K|=q, j\notin K} |\frac{\partial f_{I,jK}}{\partial \overline{z}_j}|^2\\
\eea

We add the left side to the first sum and the right side to the other sum.

Hence
\bea
|\overline{\partial} f|^2 & = & 
 \sum'_{|I|=p}\sum'_{|J|=q+1}\sum_{j}|\frac{\partial f_{I,J}}{\partial \overline{z}_j}|^2 \\
& - & \sum'_{|I|=p}\sum'_{|K|=q}\sum_{j,\ell\notin K}\frac{\partial f_{I,\ell K}}{\partial \overline{z}_j} \overline{\frac{\partial f_{I,jK}}{\partial \overline{z}_\ell}} \\
\eea

Since $f_{I,\ell K}=0$ if $\ell \in K$, we can drop the condition that $\ell\notin K$ and the same for $j\notin K.$ Hence 

\bea
|\overline{\partial} f|^2 
& = & \sum'_{|I|=p}\sum'_{|J|=q+1}\sum_{j}|\frac{\partial f_{I,J}}{\partial \overline{z}_j}|^2 \\
& - & \sum'_{|I|=p}\sum'_{|K|=q}\sum_{j,\ell}\frac{\partial f_{I,lK}}{\partial \overline{z}_j} \overline{\frac{\partial f_{I,jK}}{\partial \overline{z}_\ell}}\\
\eea

\end{proof}

We will next be more specific about our $L^2$ spaces. First pick some smooth function $\psi$ as in Theorem 4.1.$3_h$ for the $r=s=2$ case. We use weights $e^{-\phi_j}$ using smooth functions $\phi$ and $\psi$ as follows:

$$
\phi_1=\phi-2\psi,\phi_2=\phi-\psi,\phi_3=\phi.
$$
With these weights Theorem 4.1.$3_h$ applies to show that smooth compactly supported forms are dense
in the graph norms of $S$ and $T^*.$

Let $f\in D_{(p,q+1})$. Recall from Lemma 4.1.$3_d$ that if $$f=\sum'_{|I|=p}\sum'_{|J|=q+1}f_{I,J}dz^I\wedge d\overline{z}^J$$ then
\bea
T^*f & = & \sum'_{|I|=p}\sum'_{|K|=q}g_{I,K}dz^I\wedge d\overline{z}^K\\
&\mbox{where} &\\
g_{I,K} & = & (-1)^{p-1}e^{\phi_1}\sum_{j=1}^n \frac{\partial (e^{-\phi_2} f_{I,jK})}{\partial z_j}\\
\eea

{\bf Lemma 4.2.$1_h$}
\bea
e^{\psi}T^* f & = & (-1)^{p-1}\sum'_{I,K}\sum_j \delta_jf_{I,jK}dz^I\wedge d\overline{z}^K\\
& + & (-1)^{p-1}\sum'_{I,K}\sum_j f_{I,jK}\frac{\partial \psi}{\partial z_j}dz^I\wedge d\overline{z}^K\\
\eea

\begin{proof}
\bea
e^{\psi}T^* f & = & \sum'_{|I|=p}\sum'_{|K|=q}(e^\psi g_{I,K})dz^I\wedge d\overline{z}^K\\
e^\psi g_{I,K} & = & (-1)^{p-1}e^{\psi}e^{\phi_1}\sum_{j=1}^n \frac{\partial (e^{-\phi_2} f_{I,jK})}{\partial z_j}\\
& = & (-1)^{p-1}e^{\psi+\phi_1}\sum_{j=1}^n \frac{\partial (e^{-\phi+\psi} f_{I,jK})}{\partial z_j}\\
& = & (-1)^{p-1}e^{\psi+\phi_1}\sum_{j=1}^n e^\psi\frac{\partial (e^{-\phi} f_{I,jK})}{\partial z_j}\\
& + & (-1)^{p-1}e^{\psi+\phi_1}\sum_{j=1}^n e^{-\phi} f_{I,jK}e^{\psi}\frac{\partial \psi}{\partial z_j}\\
& = & (-1)^{p-1}\sum_{j=1}^n e^\phi\frac{\partial (e^{-\phi} f_{I,jK})}{\partial z_j} +  (-1)^{p-1}\sum_{j=1}^n  f_{I,jK}\frac{\partial \psi}{\partial z_j}\\
& = & (-1)^{p-1}\sum_{j=1}^n \delta_j(f_{I,jK}) +  (-1)^{p-1}\sum_{j=1}^n  f_{I,jK}\frac{\partial \psi}{\partial z_j}\\
e^{\psi}T^* f & = & \sum'_{|I|=p}\sum'_{|K|=q}(-1)^{p-1}\sum_{j=1}^n \delta_j(f_{I,jK})dz^I\wedge d\overline{z}^K\\
& + & \sum'_{|I|=p}\sum'_{|K|=q}(-1)^{p-1}\sum_{j=1}^n f_{I,jK}\frac{\partial \psi}{\partial z_j}
dz^I\wedge d\overline{z}^K\\
\eea
\end{proof}

We prove the large constant, small constant lemma:

{\bf Lemma 4.2.$1_i$.}
If $a,b$ are complex numbers and $c>0,$ then
$$2|ab|\leq c|a|^2+\frac{1}{c} |b|^2.$$
Moreover,
$$|a+b|^2 \leq (1+c)|a|^2+(1+\frac{1}{c})|b|^2.$$

\begin{proof}
The first inequality follows from
\bea
c|a|^2+\frac{1}{c}|b|^2-2|a||b| & = & (\sqrt{c}|a|-\sqrt{1/c}|b|)^2\\
& \geq & 0\\
\eea
The second inequality follows then from
\bea
|a+b|^2 & = & |a|^2+a\overline{b}+\overline{a}b+|b|^2\\
& \leq & |a|^2+2|a||b|+|b|^2\\
& \leq & (1+c)|a|^2+(1+\frac{1}{c})|b|^2\\
\eea
\end{proof}

We introduce the notation $2'=1+c,2''=1+\frac{1}{c}, c>0.$
Then statements involving $2',2''$ below are valid for any choice of $c.$

{\bf Lemma 4.2.$1_j.$}
$$
\sum'_{I,K}\sum_{j,k=1}^n \delta_j f_{I,jK}\overline{\delta_k f_{I,kK}}\leq
2'e^{2\psi} |T^*f|^2+2''|f|^2|\partial \psi|^2.$$

\begin{proof}
\bea
\sum'_{I,K}\sum_{j,k=1}^n \delta_j f_{I,jK}\overline{\delta_k f_{I,kK}} & = &
\sum'_{I,K}\left(\sum_{j=1}^n \delta_j f_{I,jK}\right)\left(\overline{\sum_{k=1}^n\delta_k f_{I,kK}}\right)\\
& = & |(-1)^{p-1}\sum'_{I,K}\sum_j \delta_jf_{I,jK}dz^I\wedge d\overline{z}^K|^2\\
& = & |e^\psi T^*f-(-1)^{p-1}\sum'_{I,K}\sum_j f_{I,jK}\frac{\partial \psi}{\partial z_j}dz^I\wedge d\overline{z}^K|^2\\
& = & \sum'_{I,K} |e^\psi g_{I,K}-(-1)^{p-1}\sum_j f_{I,jK}\frac{\partial \psi}{\partial z_j}|^2\\
& \leq & 2'\sum'_{I,K} |e^\psi g_{I,K}|^2+2''\sum'_{I,K}|\sum_j f_{I,jK}\frac{\partial \psi}{\partial z_j}|^2\\
& \leq & 2'e^{2\psi}|T^*f|^2+2''\sum'_{I,K}\sum_j |f_{I,jK}|^2
|\frac{\partial \psi}{\partial z_j}|^2\\
& = & 2'e^{2\psi}|T^*f|^2+2''\sum'_{I,K}\sum_{j\notin K} |f_{I,jK}|^2
|\frac{\partial \psi}{\partial z_j}|^2\\
& = & 2'e^{2\psi}|T^*f|^2+2''\sum'_{|I|=p,|J|=q+1}\sum_{j \in J} |f_{I,jJ^j}|^2
|\frac{\partial \psi}{\partial z_j}|^2\\
& = & 2'e^{2\psi}|T^*f|^2+2''\sum'_{I,J}\sum_{j \in J} |f_{I,J}|^2
|\frac{\partial \psi}{\partial z_j}|^2\\
& \leq & 2'e^{2\psi}|T^*f|^2+2''\sum'_{I,J} |f_{I,J}|^2
|\partial \psi|^2\\
\eea
\end{proof}

{\bf Lemma 4.2.$1_k$.}
\bea
\sum'_{I,K}\sum_{j,k=1}^n (\delta_j f_{I,jK}\overline{\delta_k f_{I,kK}}
& - & 
\frac{\partial f_{I,j K}}{\partial \overline{z}_k }
\overline{
\frac{\partial f_{I,kK}}{\partial \overline{z}_j}
}
)e^{-\phi}\\
& \leq  & 2'|T^*f|^2e^{-\phi_1}+|Sf|^2e^{-\phi_3}+2''|f|^2|\partial \psi|^2 e^{-\phi}\\
\eea

\begin{proof}
\bea
\sum'_{I,K}\sum_{j,k=1}^n (\delta_j f_{I,jK}\overline{\delta_k f_{I,kK}}
& - & \frac{\partial f_{I,j K}}{\partial \overline{z}_k }\overline{\frac{\partial f_{I,kK}}{\partial \overline{z}_j}})e^{-\phi}\\
& \leq & (2'e^{2\psi}|T^*f|^2+2''|f|^2|\partial \psi|^2)e^{-\phi}\\
& + & \left(|\overline{\partial}f|^2-\sum'_{I,J}\sum_j |\frac{\partial f_{I,J}}{\partial \overline{z}_j}|^2\right)e^{-\phi}\\
& = & 2'e^{2\psi-\phi}|T^*f|^2+2''|f|^2|\partial \psi|^2e^{-\phi}
 +  |Sf|^2e^{-\phi}\\
& = & 2'|T^*f|^2e^{-\phi_1}+2''|f|^2|\partial \psi|^2e^{-\phi}
 +  |Sf|^2e^{-\phi_3}\\
\eea
\end{proof}

{\bf Theorem 4.2.$1_\ell.$}
Let $\Omega$ be a pseudoconvex domain in $\mathbb C^n$. Let $0\leq \eta_\nu\leq 1$ be a sequence of $\mathcal C^\infty$ functions with compact support such that for any given compact subset of $\Omega$ only finitely many are not identically $1$. Let $\psi$ be a $\mathcal C^\infty$
function with $\sum_{k=1}^n|\frac{\partial \eta_\nu}{\partial \overline{z}_k}|^2\leq e^\psi$
in $\Omega$ for all $\nu=1,2,\dots.$
Let $\phi\in \mathcal C^\infty(\Omega)$ and set $\phi_1=\phi-2\psi, \phi_2=\phi-\psi,\phi_3=\phi.$
Let $T$ denote the $\overline{\partial}$ operator from $L^2_{(p,q)}(\Omega,\phi_1)$
to $L^2_{(p,q+1)}(\Omega,\phi_2)$ and let  $S$ denote the $\overline{\partial}$ operator from $L^2_{(p,q+1)}(\Omega,\phi_2)$
to $L^2_{(p,q+2)}(\Omega,\phi_3).$ Here $0\leq p\leq n, 0\leq q\leq n-1.$
Let $f\in D_{(p,q+1)}.$ Then
$$
\int \left(\sum'_{|I|=p,|K|=q}\sum_{j,k=1}^n f_{I,jK}\overline{f_{I,kK}}\frac{\partial \phi^2}{\partial z_j\partial \overline{z}_k}-2'' |\partial \psi|^2|f|^2\right)e^{-\phi} \leq 2'\|T^*f\|_1^2+\|Sf\|_3^2.
$$

\begin{proof}
Integrating both sides of Lemma 4.2.$1_k$, we get
\bea=
\sum'_{I,K}\sum_{j,k=1}^n \int(\delta_j f_{I,jK}\overline{\delta_k f_{I,kK}}
& - & \frac{\partial f_{I,j K}}{\partial \overline{z}_k }\overline{\frac{\partial f_{I,kK}}{\partial \overline{z}_j}})e^{-\phi}\\
& \leq  & 2'\int |T^*f|^2e^{-\phi_1}+\int|Sf|^2e^{-\phi_3}\\
& + & 2''\int |f|^2|\partial \psi|^2 e^{-\phi}\\
\eea

We next apply Corollary 4.2.$1_f$ to each integral on the left side, setting $f=f_{I,jK}, g=f_{I,kK}$
in the corollary.
Then we get 
\bea
\int \sum'_{|I|=p,|K|=q}\sum_{j,k=1}^n f_{I,jK}\overline{f_{I,kK}}\frac{\partial \phi^2}{\partial z_j\partial \overline{z}_k}e^{-\phi} &  \leq & 2'\|T^*f\|_1^2+\|Sf\|_3^2\\
& +& 2''\int |\partial \psi|^2|f|^2e^{-\phi}\\
\eea

\end{proof}

\section{Hormander 4.2 $\overline{\partial}$ in $L^2_{loc}$, Levi problem}

{\bf Corollary 4.2.$1_m$.}
Assume the conditions of Theorem 4.2.$1_\ell$. Suppose in addition the condition that
$$\sum_{j,k=1}^n \frac{\partial^2 \phi}{\partial z_j\partial \overline{z}_k}w_j\overline{w}_k\geq
2(|{\partial \psi}|^2+e^\psi)\sum_1^n|w|^2,w\in \mathbb C^n.$$
Then we have that for every $f\in D_{T^*}\cap D_S$ that
$$
\|f\|^2_{\phi_2}\leq \|T^*f\|^2_{\phi_1}+\|Sf\|^2_{\phi_3}.
$$

We let $c:\Omega\rightarrow \mathbb R$ denote the largest function such that $$c(z)|w|^2\leq \sum_{j,k=1}^n \frac{\partial^2 \phi}{\partial z_j\partial \overline{z}_k}w_j\overline{w}_k$$
for all $z\in \Omega$ and all $w\in \mathbb C^n$. Then, in particular, $c\geq 2(|{\partial \psi}|^2+e^\psi)$ and $c(z)$ is continuous.

\begin{proof}
\bea
\sum'_{|I|=p,|K|=q}\sum_{j,k=1}^n f_{I,jK}\overline{f_{I,kK}}\frac{\partial \phi^2}{\partial z_j\partial \overline{z}_k} & \geq &
\sum'_{I,K}c\sum_{j =1}^n|f_{I,jK}|^2\\
& = &
c\sum'_{I,K}\sum_{j \notin K}|f_{I,jK}|^2\\
& = &
c\sum'_{I,|J|=q+1}\sum|f_{I,J}|^2\\
& = & c|f|^2\\
\eea
  
Hence we get for any $f\in D_{(p,q+1)}$ that
\bea
\|f\|^2_{\phi_2} & = & \int |f|^2 e^{-\phi_2}\\
& = & \int e^{\psi} |f|^2e^{-\phi}\\
& \leq & \int (\frac{c}{2}-|{\partial} \psi|^2)|f|^2e^{-\phi}\\
& \leq &  \frac{1}{2}\left(\sum'_{|I|=p,|K|=q}\sum_{j,k=1}^n f_{I,jK}\overline{f_{I,kK}}\frac{\partial \phi^2}{\partial z_j\partial \overline{z}_k}e^{-\phi}-2\int (|{\partial} \psi|^2)|f|^2e^{-\phi}\right)\\
& \leq & \frac{1}{2}(2\|T^*f\|^2+\|Sf\|^2)\\
&  \leq & \|T^*f\|^2+\|Sf\|^2\\
\eea
The corollary follows now for all $f\in D_{T^*}\cap D_S$ by density of $D_{(p,q+1)}$ in the graph norm.
\end{proof}

{\bf Theorem 4.2.$1_n$.}
Assume the conditions in Theorem 4.2.$1\ell$ and Corollary 4.2.$1_m$.
Then if $f\in L^2_{(p,q+1)}(\Omega,\phi_2)$ and $\overline{\partial} f=0$, then there exists
a $g\in L^2_{(p,q)}(\Omega,\phi_1)$ such that $\overline{\partial} g=f$  and
$$
\|g\|_{\phi_1} \leq \|f\|_{\phi_2}.
$$

\begin{proof}
By Theorem 4.1.$3_j$, we have, since $\|f\|^2\leq \|T^*f\|^2+\|Sf\|^2$ for all $f\in D_{T^*}\cap D_S$ that for any $y\in D_{T^*}$ and any $u\in D_S$ we get:
$$
|y(u)| \leq \|T^*y\|\|u\|+\|Su\|\|y\|.
$$
Now $N_S\subset D_S$ so if $y\in D_{T^*}$ and $u\in N_S$ then
$$
|y(u)| \leq \|T^*y\|\|u\|.
$$
Hence we have the norm of $y$ as a linear functional on $N_S$ that
$\|y_{|N_S}\|\leq \|T^*y\|$ for all $y\in D_{T^*}.$ We set $F=N_S.$ This is a closed subspace containing $R_T.$
It follows then from Theorem 4.1.$1_b$ and Theorem 4.1.1'  with constant $C=1$ that $R_T=N_S$ and that for every $u\in N_S$ there is a $v\in D_T$
with $Tv=u$ and $\|v\|\leq \|u\|.$
\end{proof}

{\bf Lemma 4.2.$1_o$.}
Let $\{a_j,b_j\}_ {j=2,3,\dots}$ be given strictly positive constants.  Then there is a smooth, positive increasing, convex function f$\lambda(x)$ or $x\geq 0$ such that  $\lambda'(x)>a_j$ on $[j,j+1], j\geq 2$ and
$\lambda(x)>b_j$ on $[j,j+1], j\geq 2.$

\begin{proof}
To find such a $\lambda,$ pick a sufficiently large smooth function $\sigma(x)>>1, x\geq 0$ and define
$\nu(x)=\int_0^x \sigma(t)dt$, $\lambda(x)=\int_0^x \nu(t)dt.$
\end{proof}

{\bf Lemma 4.2.$1_p.$}
Suppose that $\Omega\subset \mathbb C^n$ is pseudoconvex and assume that $\psi$ is as in Theorem 4.2.$1_\ell$. Also suppose that $f\in L^2_{(p,q+1),loc}(\Omega)$. Then there exists a smooth strongly plurisubharmonic function $\phi$ on $\Omega$ so that
$$
\sum_{i,k=1}^n\frac{\partial^2 \phi}{\partial z_i\partial \overline{z}_k}w_i\overline{w}_k
\geq 2(|\partial \psi|^2+e^\psi) \sum_{\ell=1}^n|w_\ell|^2
$$
and $f\in L^2_{(p,q+1)}(\Omega, \phi_2), \phi_2=\phi-\psi.$

\begin{proof}
Using Theorem 2.6.11, we can find a smooth strongly plurisubharmonic function $\rho$ on $\Omega$ such that
$\{\rho<c\}\subset \subset \Omega$ for any $c\in \mathbb R.$ We can assume that $\min \rho=2$
by adding a constant. Choose a smooth function $m(z)>0$ on $\Omega$ such that
$\sum_{i,k=1}^n\frac{\partial^2 \rho}{\partial z_i\partial \overline{z}_k}w_i\overline{w}_k\geq
m \|w\|^2.$

For $j=2,3,\dots$, let $L_j=\{z\in \Omega; j\leq \rho(z)\leq j+1\}.$ Then each $L_j$ is compact and
$\Omega=\cup L_j.$
Define
$$a_j=\sup_{L_j}\frac{2(|\partial \psi|^2+e^\psi)}{m(z)}, j\geq 2.$$ Since $L_j$ is compact,
$f\in L^2_{(p,q+1)}(L_j).$ Pick $b_j>0$ so that
$\int_{L_j}|f|^2e^{\psi-b_j}<\frac{1}{2^j}, j\geq 2.$ Let $\lambda$ be as in the  Lemma 4.2.$1_o$. 
We define $\phi=\lambda\circ \rho$ on $\Omega.$
Then on $L_j$,

\bea
\sum_{i,k=1}^n\frac{\partial^2 \phi}{\partial z_i\partial \overline{z}_k}w_i\overline{w}_k & \geq &
\lambda'(\rho(z))\sum_{i,k=1}^n\frac{\partial^2 \rho}{\partial z_i\partial \overline{z}_k}w_i\overline{w}_k\\
& \geq & a_j\sum_{i,k=1}^n\frac{\partial^2 \rho}{\partial z_i\partial \overline{z}_k}w_i\overline{w}_k\\
& 0 & \sup_{L_j}\frac{2(|\partial \psi|^2+e^\psi)}{m(z)}\sum_{i,k=1}^n\frac{\partial^2 \rho}{\partial z_i\partial \overline{z}_k}w_i\overline{w}_k\\
& \geq & \frac{2(|\partial \psi|^2+e^\psi)}{m(z)}m(z)|w|^2\\
& = & 2(|\partial \psi|^2+e^\psi)
|w|^2\\
\eea

Also 
\bea
\int_{\Omega}|f|^2 e^{\psi-\phi} & = & \sum_{j=2}^\infty \int_{L_j}|f|^2 e^{\psi-\lambda(\rho)}\\
 & \leq & \sum_{j=2}^\infty \int_{L_j}|f|^2 e^{\psi-b_j}\\
& < & \sum_{j=2}^\infty \frac{1}{2^j}\\
& < & \infty\\
\eea

\end{proof}

{\bf Corollary 4.2.$1_q$.}
If $f\in L^2_{(p,q+1),loc}(\Omega),$ $\Omega$ pseudoconvex in $\mathbb C^n$, and $\overline{\partial}f=0$, then there exists a $g\in L^2_{(p,q),loc}(\Omega)$ so that 
$\overline{\partial}g=f$ in $\Omega.$

\begin{proof}
let $f\in L^2_{(p,q+1),loc}(\Omega)$ and suppose $\overline{\partial} f=0.$ 
We pick $\phi$ as in Lemma 4.2.$1_p$. Then $f\in L^2_{(p,q+1)}(\Omega,\phi_2)$ and $\overline\partial f=0.$ Using Theorem 4.2.$1_n$, we find $g\in L^2_{(p,q)}(\Omega,\phi_1)$ with $\overline\partial g=f.$ This
$g\in L^2_{(p,q),loc}(\Omega).$
\end{proof}

{\bf Lemma 4.2.$1_r$:}
Suppose that $f\in L^2(\Omega)$ and that $\overline{\partial}f=0.$ Then there is a holomorphic function $u$ on $\Omega$ so that $u=f$ a.e.

\begin{proof}
Suppose that $f\in L^2(B(0,\delta)),$ the ball of radius $\delta$ in $\mathbb C^n$ centered at $0.$
Assume that $\overline{\partial} f=0$ in the sense of distributions. We apply the smoothing theorem.
Then if $0<\epsilon<\frac{\delta}{2},$ $f*\chi_\epsilon$ is $\mathcal C^\infty$ in $B(0,\delta/2)$
and $\|f*\chi_\epsilon-f\|_{L^2(B(0,\delta/2)}\rightarrow 0$ when $\epsilon \rightarrow 0.$
By Lemma 4.1.$4_b$, $\overline{\partial}(f*\chi_\epsilon)= (\overline\partial f)*\chi_\epsilon=0*\chi_{\epsilon}=0.$ Hence each $f*\chi_\epsilon$ is holomorphic. We can choose
$\epsilon_j\searrow 0$ so that 
$\|f*\chi_{\epsilon_{j+1}}-f*\chi_{\epsilon_j}\|_{L^2(B(0,\delta/2)}<\frac{1}{2^j}.$
Let $u_j:=f*\chi_{\epsilon_j}.$ We get
$$
\|u_{j+1}-u_j\|_{L^1(B(0,\delta/2)}\leq C\|u_{j+1}-u_j\|_{L^2(B(0,\delta/2)}\leq \frac{C}{2^j}.
$$
By Theorem 2.2.3 we then get pointwise estimates
$|u_{j+1}(z)-u_j(z)|\leq \frac{C'}{2^j}$ on $B(0,\delta/4).$ Hence $u_j$ converges uniformly 
to a function $u$ on $B(0,\delta/4).$ By Corollary 2.2.5, $u$ is holomorphic. But necessarily $u=f$ a.e.

\end{proof}

We prove an extension Lemma. We use the notation $z=(z_1,\dots,z_n)=(z',z_n)$
for points in $\mathbb C^n.$

{\bf Theorem 4.2.$8_a$.}
Let $\Omega\subset \mathbb C^n$ be pseudoconvex, $n>1.$ Let $\Omega':=\{z'\in \mathbb C^{n-1}; (z',0)\in \Omega\}.$ If $f$ is a holomorphic function in $\Omega',$ then there exists a holomorphic function $F$ in $\Omega$ so that $F(z',0)=f(z')$ for all $z'\in \Omega.$

\begin{proof}
Let $z'\in \Omega.$ Then there exists an $\epsilon_{z'}>0$ so that $B((z',0),\epsilon_{z'})\subset \Omega.$ Set $\omega:=\cup_{z'\in \Omega'}B((z',0),\epsilon_{z'}).$
Then $\omega$ is an open set in $\mathbb C^n$, $\Omega'*(0)\subset \omega\subset \Omega$
and for every $(z',z_n)\in \omega,$ $z'\in \Omega'.$
Define $G$ on $\omega$ by $G(z',z_n)=f(z').$ Then $G$ is holomorphic. The sets $\Omega'*(0)$
and $\partial \omega\cap \Omega$ are disjoint relatively closed sets in $\Omega.$
Hence there exists a function $\chi\in \mathcal C^\infty(\Omega)$ such that
$\chi=1$ in an open set containing $\Omega'*(0)$ and $\chi=0$ in an open set containing
$\partial \omega\cap \Omega.$ We define $H$ on $\Omega$ by letting
$H(z)=\chi G$ on $\omega$ and $H(z)=0$ on $\Omega\setminus \omega.$
Then $H\in \mathcal C^\infty(\Omega).$ Let $\sigma=\overline{\partial}H.$ Then $\sigma\equiv 0$
in an open neighborhood of $\Omega'*(0).$ Define $\tau$ on $\Omega$, by setting
$\tau=0$ on $\Omega'*(0)$ and $\tau=\sigma/z_n$ on the complement.
Then $\tau=0$ in a neighborhood of $\Omega'*(0)$. Hence $\overline\partial \tau=0$
in a neighborhood of $\Omega'*(0).$ Also in the complement of $\Omega'*(0),$ we have
$\overline\partial \tau=\frac{\overline\partial \sigma}{z_n}=0$. So $\overline\partial{\tau}=0$ on $\Omega.$
Since $\tau$ is smooth, $\tau\in L^2_{(0,1),loc}(\Omega).$ So by Corollary 4.2.$1_q$, there exists
$h\in L^2_{loc}(\Omega)$ so that $\overline{\partial} h=\tau.$
By Lemma 4.2.$1_r$, we can let $h$ be a $\mathcal C^\infty$ holomorphic function in a neighborhood
of $\Omega'*(0).$ 

Let $F=H-z_nh.$ Then $F$ is holomorphic in a neighborhood of $\Omega'*(0)$ and $F(z',0)=f(z')$
if $z'\in \Omega.$ Moreover $F\in L^2_{loc}(\Omega)$ and $\overline{\partial}F=
\overline\partial H-z_n\overline\partial h=\sigma-z_n \tau=0.$
Hence by Lemma 4.2.$1_r$ again, there is a holomorphic function $\hat{F}$ on $\Omega$
so that $\hat{F}=F$ a.e. But then, $\hat{F}\equiv F$ on a neighborhood of $\Omega'*(0).$
So $f(z')=\hat{F}(z',0).$
\end{proof}

We can now solve the Levi problem:

{\bf Theorem 4.2.$8_b$.}
A pseudoconvex domain in $\mathbb C^n$ is a domain of holomorphy.

\begin{proof}
We prove the theorem by induction in the dimension. The theorem is true in dimension 1 because all domains are domains of holomorphy. Next, assume that the theorem is true for domains in $\mathbb C^{n-1},n \geq 2.$ We prove the theorem in $\mathbb C^n$ by contradiction. So assume that there is a domain $\Omega\in \mathbb C^n$ which is pseudoconvex, but $\Omega$ is not a domain of holomorphy.

Then, by definition 2.5.1 there are two open sets $\Omega_1$ and $\Omega_2$ in $\mathbb C^n$
with the following properties:\\
(a) $\emptyset \neq \Omega_1\subset \Omega_2\cap \Omega.$\\
(b) $\Omega_2$ is connected and $\Omega_2\setminus \Omega \neq \emptyset.$\\
(c) For every $u\in A(\Omega)$ there is a function $u_2\in A(\Omega_2)$ such that $u=u_2$ in $\Omega_1.$\\

Pick a point $p\in \Omega_1$ and a point $q\in \Omega_2\setminus \Omega.$
Since $\Omega_2$ is connected, there is a curve $\gamma(t), 0\leq t\leq 1,$ $\gamma(0)=p$ and
$\gamma(1)=q.$ Let $t_0$ be the smallest $t$ so that $\gamma(t)\in \Omega_2\setminus \Omega.$
Then $\gamma([0,t_0>)\subset \Omega_2\cap \Omega.$ We replace $q$ by $\gamma(t_0)$. Then we can arrange that $\gamma(0)=p\in \Omega_1$, $\gamma([0,1>)\subset \Omega_2\cap \Omega$
and $\gamma(1)=q\in \Omega_2\setminus \Omega.$

Pick $\epsilon>0$ so that $B(q,\epsilon)\subset \Omega_2.$
Pick $0<t_1<1$ so that $\gamma(t_1)\in B(q,\epsilon/4).$ Then there exists a $0<\rho\leq \epsilon/4$ so that $B(\gamma(t_1),\rho)\subset \Omega_2\cap \Omega$ and there exists a $q'\in \partial B(\gamma(t_1),\rho)\setminus \Omega.$ 
We let $\Omega_1'=B(\gamma(t_1),\rho), \Omega_2'=B(\gamma(t_1),\epsilon/2)\subset \Omega_2.$
Then \\
(a') $\emptyset \neq \Omega_1'\subset \Omega_2'\cap\Omega.$\\
(b') $\Omega_2'$ is connected and $\Omega_2'\setminus \Omega \neq \emptyset.$\\
We show also \\ 
(c') For every  $u'\in A(\Omega),$ there is a function $u_2'\in A(\Omega_2')$ such 
that $u'=u_2'$ in $\Omega_1'.$\\

To show (c'), let $u'\in A(\Omega).$ Then by (c) there is a function $u_2\in A(\Omega_2)$ so that
$u'=u_2$ in $ \Omega_1.$ Let $$V=\{z\in \Omega_2\cap \Omega\; \mbox{ such that $u'=u_2$ in a neighborhood of $z.$}\}$$ Then $\Omega_1\subset V$. Also $\gamma([0,1>)\subset \Omega_2\cap \Omega.$ 
Hence by the identity theorem we must have that $\gamma([0,1>)\subset V.$ So in particular,
there is a small ball centered at $\gamma(t_1)$ on which $u'=u_2.$ 
Since $\Omega_1'$ is a ball centered at $\gamma(t_1)$ and $\Omega_1'\subset \Omega_2\cap \Omega,$
it follows that $u'=u_2$ in $\Omega_1'.$
Finally, we let $u_2'$ be the restriction of $u_2$ to $\Omega_2'.$ 
Since $\Omega_1'\subset \Omega_2'$, it still follows that $u'=u_2'$ in $\Omega_1'.$
This proves $(c').$
The proof so far proves actually a small useful result in order to characterize domains of holomorphy.
We write this as a lemma:

{\bf Lemma 4.2.$8_c$.}
A domain $\Omega\subset \mathbb C^n$ is a domain of holomorphy if and only if
there do not exist two concentric open balls $B_1\subset B_2$ such the following three
properties hold:\\
(a'') $\emptyset \neq B_1\subset \Omega\cap B_2.$\\
(b'') $B_2\setminus \Omega\neq \emptyset.$\\
(c'') For every $u\in A(\Omega)$ there exists a holomorphic function $u_2$ on $B_2$ such
that $u=u_2$ on $B_1.$

We continue with the proof of the Theorem.
We summarize: Starting with the hypothesis that the pseudoconvex domain $\Omega$ is not a domain
of holomorphy, we have found two concentric balls satisfying the properties (a''),(b'') and (c'').
There is no loss of generality to assume that $B_1$ is the unit ball centered at the origin in $\mathbb C^n$ and that $B_2$ is the larger concentric ball $B(0,r)$ for some $r>1.$
Also we can assume that the point $(1,0,0,\dots,0)$ is in the boundary of $\Omega.$
Next we let $\Omega'$, $B_1',$ $B_2'$ be the corresponding open sets of $z'\in \mathbb C^{n-1}$
for which $(z',0)$ is in the domains. 
Note that the sets $B_1'$, $B_2'$ satisfy condition  (a''), (b'') in the characterization of domains of holomorphy for the domain $\Omega'.$ We will show that (c'') is also satisfied.
Let $v(z')\in A(\Omega').$ Since $\Omega$ is pseudoconvex, we can apply the extension lemma to find
a holomorphic function $V\in A(\Omega)$ such that $v(z')=V(z',0)$ for all $z'\in \Omega'.$
Hence there exists a holomorphic function $V_2$ on $B_2$ so that $V_2=V$ on $B_1.$
Let $v_2 $ be the holomorphic function on $B_2'$ given by $v_2(z')=V_2(z',0)$.
Then if $z'\in B_1'$, we have that $(z',0)\in B_1.$ Hence $v_2(z')=v(z').$ This proves (c'')
for the domains $B_1',B_2', \Omega'.$ 
By the inductive hypothesis, the domain $\Omega'$ is not a domain of holomorphy and therefore
also cannot be pseudoconvex. However, it follows from Theorem 2.6.7 that
$\Omega'$ is pseudoconvex because $\Omega$ is pseudoconvex. We have reached a contradiction.

\end{proof}

\section{Hormander, Acta paper}

We will now start with the preparations for proving the Ohsawa-Takegoshi extension theorem.
This is a more precise version of the extension theorem, Theorem 4.2.$8_a$ which was the key ingredient in the solution of the Levi problem. The proof is based on a version of Hormander's theorem which was already in his paper: $L^2$ estimates and existence theorems for the $\overline{\partial}$ operator, Acta Math, 113, 89--152 (1965). This paper is also the basis for his book.

We first investigate  Hermitian matrices.
Let $A=\{a_{ij}\}_{i,j=1}^n$ be an n by n matrix of complex numbers where $a_{ij}$ denotes
the term on row i and column j.
We say that $A$ is Hermitian if $a_{ij}=\overline{a}_{ji}.$ This is equivalent to the
statement that $A=\overline{A}^T$ where $T$ is the transpose.

\begin{lemma}
An n by n matrix $A$ is Hermitian if and only if there is an orthonormal basis $b_j$ of eigenvectors
and the corresponding eigenvalues $\lambda_j$ are all real numbers.
In this case, $A$ is selfadjoint in the sense that for any pair of vectors $x,y$
we have that $<x,Ay>=<Ax,y>.$ We then write $A=A^*$ and $<x,Ay>=<A^*x,y>.$
\end{lemma}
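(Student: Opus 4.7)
The plan is to prove both directions of the equivalence, treating this as a finite-dimensional spectral theorem, and then derive the selfadjointness identity as a corollary.

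For the easy direction, assume there is an orthonormal basis $b_1,\dots,b_n$ of eigenvectors with real eigenvalues $\lambda_1,\dots,\lambda_n$. First I would form the unitary matrix $U$ whose columns are the $b_j$ and the diagonal matrix $D=\mathrm{diag}(\lambda_1,\dots,\lambda_n)$, so that $A=UDU^*$. Since the $\lambda_j$ are real, $D^*=D$, and a direct computation yields $A^*=(UDU^*)^*=UD^*U^*=UDU^*=A$, proving that $A$ is Hermitian.

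For the harder direction, assume $A$ is Hermitian and proceed by induction on $n$. The base case $n=1$ is trivial. For the inductive step, the key preliminary observation is that $\langle x,Ay\rangle=\overline{\langle Ay,x\rangle}=\overline{\sum_{ij}a_{ij}y_j\overline{x_i}}=\sum_{ij}\overline{a_{ij}}\overline{y_j}x_i=\sum_{ij}a_{ji}\overline{y_j}x_i=\langle Ax,y\rangle$, using the Hermitian condition $a_{ji}=\overline{a_{ij}}$. Next, I would invoke the fundamental theorem of algebra applied to the characteristic polynomial $\det(A-\lambda I)$ to obtain at least one (complex) eigenvalue $\lambda_1$ with a unit eigenvector $b_1$. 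To see $\lambda_1\in\mathbb{R}$, compute $\lambda_1=\lambda_1\langle b_1,b_1\rangle=\langle Ab_1,b_1\rangle=\langle b_1,Ab_1\rangle=\overline{\lambda_1}\langle b_1,b_1\rangle=\overline{\lambda_1}$.

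The main obstacle is then the inductive step, where I need that $A$ restricts to a Hermitian operator on the orthogonal complement $W=b_1^\perp$. For $w\in W$, the computation $\langle b_1,Aw\rangle=\langle Ab_1,w\rangle=\lambda_1\langle b_1,w\rangle=0$ shows $Aw\in W$, so $A$ restricts to an operator $A|_W:W\to W$ which is still selfadjoint with respect to the induced inner product. By the inductive hypothesis applied in the $(n-1)$-dimensional space $W$, there is an orthonormal eigenbasis $b_2,\dots,b_n$ of $W$ with real eigenvalues, and together with $b_1$ this yields the desired orthonormal eigenbasis of $\mathbb{C}^n$.

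Finally, the selfadjointness identity $\langle x,Ay\rangle=\langle Ax,y\rangle$ was already established in the preliminary computation above; writing $A^*$ for the unique matrix satisfying $\langle x,Ay\rangle=\langle A^*x,y\rangle$ for all $x,y$, this exactly says $A=A^*$, consistent with $A^*=\overline{A}^T$.
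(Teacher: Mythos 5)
Your proof is correct, and it takes a different (and arguably cleaner) route than the paper's. In the easy direction you package the argument as $A=UDU^*$ with $D$ real-diagonal and $U$ unitary, from which $A^*=A$ is immediate; the paper instead computes $\langle b_i,Ab_j\rangle$ and $\langle b_i,\overline{A}^Tb_j\rangle$ directly and compares them entry by entry. In the hard direction you prove the spectral theorem by induction on dimension: use the fundamental theorem of algebra to obtain one eigenpair, show the eigenvalue is real, show that $W=b_1^\perp$ is $A$-invariant, and apply the inductive hypothesis to the selfadjoint restriction $A|_W$. The paper instead argues all at once: it shows eigenvalues are real and that eigenspaces for distinct eigenvalues are orthogonal, then considers the orthogonal complement $B$ of the sum of all eigenspaces, shows $A$ maps $B$ into itself, and concludes (a bit tersely, via ``otherwise $B$ would contain a nonzero eigenvector'') that $B=\{0\}$; it then takes orthonormal bases within each eigenspace. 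Both approaches rest on the same two pillars, selfadjointness and the existence of an eigenvalue over $\mathbb{C}$, but the induction peels off one eigenvector at a time whereas the paper works with the full eigenspace decomposition. Your inductive version avoids the paper's somewhat opaque penultimate step; the one small thing to make explicit, if you want to be fully rigorous, is that the matrix of $A|_W$ in any orthonormal basis of $W$ is again Hermitian, so the inductive hypothesis (stated for matrices) applies.
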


\begin{proof}
We think of $x$ as column vectors or $n$ by $1$ matrices. So $<x,y>=
\sum_i x_i\overline{y}_i=\overline{y}^Tx$ as matrix product.
If $A$ is Hermitian, then
\bea
<Ax,y> & = & \overline{y}^TAx\\
& = & \overline{y}^T \overline{A}^Tx\\
& = & \overline{Ay}^Tx\\
& = & <x,Ay>\\
\eea

Suppose that $A$ is Hermitian and $b_i,b_j$ are eigenvectors with eigenvalues $\lambda_i,\lambda_j.$
Then 
\bea
\lambda_i<b_i,b_j> & = & <Ab_i,b_j>\\
& = & <b_i,Ab_j>\\
& = & <b_i,\lambda_jb_j>\\
& = & \overline{\lambda}_j<b_i,b_j>\\
\eea

If we apply this first to the case $i=j$ we see that all $\lambda_j$ are real numbers.
Next if we apply this to the case when $b_i$ and $b_j$ belong to different eigenspaces,
so $\lambda_i\neq \lambda_j,$ we see that $b_i$ and $b_j$ are perpendicular vectors.
Hence the eigenspaces, $E_j$  are perpendicular. 
Suppose next that $b$ is perpendicular to all the eigenspaces. Then for any eigenvector,
$<Ab,b_j>=<b,Ab_j>=\lambda_j<b,b_j>=0$. Hence $A$ maps the orthogonal complement $B$ to all the eigenspaces to itself. But then $A=0$ since otherwise $B$ would contain a nonzero eigenvector. If we finally replace the $b_j's$ in the same eigenspace with
an orthonormal basis for that eigenspace, we obtain an orthonormal basis of eigenvectors.\\

Suppose next that $\mathbb C^n$ has an orthonormal basis $b_j$ of eigenvectors of $A$ with real
eigenvalues $\lambda_j.$ We then get:

\bea
<b_i,Ab_j> & = & <b_i,\lambda_j b_j>\\
& = & \lambda_j<b_i,b_j>\\
 & \mbox{and} &\\
<b_i,\overline{A}^Tb_j>
& = & \overline{\overline{A}^Tb_j}^Tb_i\\
& = & (A^T\overline{b}_j)^Tb_i\\
& = & \overline{b}_j^T Ab_i\\
& = & \lambda_i<b_i,b_j>\\
\eea

It follows that $$
<b_i,Ab_j>=<b_i,\overline{A}^Tb_j>$$ for all $b_i,b_j.$ It follows that
$Ab_j=\overline{A}^Tb_j$ for all $b_j$ but then we must have that $A=\overline{A}^T.$\\

The last part follows from the observation that $<x,Ay>=<\overline{A}^Tx,y>$ for any n by n matrix.
In other words $A^*=\overline{A}^T$ is valid for all n by n matrices.

\end{proof}

Next suppose that $A$ is Hermitian and $A$ is positive semidefinite, in the sense that all $\lambda_j\geq 0.$ Then we define the matrix $\sqrt{A}$ to be the matrix $B$ with the same eigenspaces as $A$ and with corresponding eigenvalues $\sqrt{\lambda_j}.$ Obviously, $BB=A.$

The previous lemma then immediately gives:

\begin{corollary}
If $A$ is a positive semidefinite Hermitian matrix, then the matrix $\sqrt{A}$ is also
a positive semidefinite Hermitian matrix.
\end{corollary}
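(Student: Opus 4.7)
The plan is to apply the previous Lemma directly, using the characterization it provides: a matrix is Hermitian if and only if it admits an orthonormal basis of eigenvectors with real eigenvalues. Since $\sqrt{A}$ was defined precisely in terms of such an eigenbasis, this is essentially a verification.

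First, I would invoke the previous Lemma on $A$: because $A$ is Hermitian, there is an orthonormal basis $b_1,\dots,b_n$ of eigenvectors of $A$ with real eigenvalues $\lambda_1,\dots,\lambda_n$, and the hypothesis of positive semidefiniteness gives $\lambda_j \geq 0$ for each $j$. Hence each square root $\sqrt{\lambda_j}$ is a well-defined nonnegative real number, and by the very definition of $\sqrt{A}$, the same basis $b_1,\dots,b_n$ is an orthonormal basis of eigenvectors of $\sqrt{A}$ with eigenvalues $\sqrt{\lambda_1},\dots,\sqrt{\lambda_n} \in \mathbb{R}$.

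Next I would apply the reverse direction of the previous Lemma to $B := \sqrt{A}$: since $B$ admits an orthonormal basis of eigenvectors with real eigenvalues, $B$ is Hermitian. Finally, for any vector $x = \sum_j c_j b_j \in \mathbb{C}^n$, expanding in the eigenbasis gives
\[
\langle Bx, x \rangle \;=\; \Bigl\langle \sum_j c_j \sqrt{\lambda_j} b_j,\; \sum_k c_k b_k \Bigr\rangle \;=\; \sum_j \sqrt{\lambda_j}\, |c_j|^2 \;\geq\; 0,
\]
using orthonormality of the $b_j$ and $\sqrt{\lambda_j} \geq 0$. Thus $\sqrt{A}$ is positive semidefinite.

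There is no real obstacle here; the main thing is simply to recognize that the construction of $\sqrt{A}$ has already been arranged so that it satisfies the spectral hypothesis of the previous Lemma, and the corollary follows by a one-line application in each direction. The only thing to be slightly careful about is that the eigenspaces of $A$ may have dimension larger than one, but this causes no trouble: the previous Lemma supplies an orthonormal basis adapted to the eigenspace decomposition, and $\sqrt{A}$ is then well-defined on each eigenspace as scalar multiplication by $\sqrt{\lambda_j}$, independent of the choice of basis within that eigenspace.
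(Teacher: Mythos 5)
Your argument is correct and is exactly the route the paper takes: the paper states the corollary follows immediately from the preceding Lemma, and your proof simply spells out the one-line application in detail (orthonormal eigenbasis with real nonnegative eigenvalues passes from $A$ to $\sqrt{A}$, and the Lemma's converse direction gives Hermitianness). No gap.
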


\begin{lemma}
Let $\Omega$ be an open set in $\mathbb R^N$ and let $A(x)$ be a continuously varying positive
semidefinite Hermitian n by n matrix. Then the map $x\rightarrow \sqrt{A}(x)$ is also continuous.
\end{lemma}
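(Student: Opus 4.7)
The plan is to represent $\sqrt{A(x)}$ locally as a uniform limit of polynomial expressions in $A(x)$. For any real polynomial $p$, the map $x\mapsto p(A(x))$ is continuous since each of its entries is a polynomial in the (continuous) entries of $A(x)$; so if one can uniformly approximate $\sqrt{A(x)}$ by $p(A(x))$ on a neighborhood of a given point, continuity of $\sqrt{A}$ follows at once. The advantage of this route over trying to diagonalize pointwise is that it bypasses the issue of choosing eigenvectors of $A(x)$ continuously in $x$, which is impossible in general when eigenvalues collide.

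Fix $x_0\in\Omega$. First I would produce a neighborhood $U\subset\Omega$ of $x_0$ and a constant $M>0$ such that the spectrum of $A(x)$ lies in $[0,M]$ for every $x\in U$. This is immediate: the operator norm of a positive semidefinite Hermitian matrix equals its largest eigenvalue and depends continuously on the entries, so one can take for example $M=\|A(x_0)\|+1$ and shrink $U$. Next, given $\varepsilon>0$, I would apply the Weierstrass approximation theorem to $\lambda\mapsto\sqrt{\lambda}$ on $[0,M]$ to obtain a real polynomial $p$ with $|p(\lambda)-\sqrt{\lambda}|<\varepsilon$ for all $\lambda\in[0,M]$.

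The key algebraic observation, which I would then invoke, is that for any positive semidefinite Hermitian matrix $B$ with spectrum in $[0,M]$, the matrix $p(B)-\sqrt{B}$ is Hermitian and has eigenvalues $p(\lambda_j)-\sqrt{\lambda_j}$, since $p(B)$ and $\sqrt{B}$ are simultaneously diagonalizable in any eigenbasis of $B$. Consequently its operator norm is at most $\sup_{[0,M]}|p-\sqrt{\cdot}|<\varepsilon$. Applying this with $B=A(x)$ uniformly over $x\in U$, one has $\|\sqrt{A(x)}-p(A(x))\|<\varepsilon$ for every $x\in U$. Combining this with the continuity of $x\mapsto p(A(x))$ via the triangle inequality yields a neighborhood $V\subset U$ of $x_0$ on which $\|\sqrt{A(x)}-\sqrt{A(x_0)}\|<3\varepsilon$, proving continuity at $x_0$.

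The main obstacle is conceptual rather than computational: recognizing that polynomial functional calculus for Hermitian matrices is continuous in both the polynomial (in sup-norm on the spectrum) and the matrix (in operator norm), so that a Stone–Weierstrass-type approximation argument can replace any pointwise eigenbasis manipulation. Once that idea is accepted the remaining steps are routine.
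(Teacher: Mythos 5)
Your proof is correct, but it takes a genuinely different route from the paper's. The paper argues by compactness on eigendata: given $x_n\to x$, it picks an orthonormal eigenbasis $\{b^n_j\}$ with eigenvalues $\lambda^n_j$ for each $A(x_n)$, passes to a subsequence along which $b^n_j\to b_j$ and $\lambda^n_j\to\lambda_j$, observes that the limit is necessarily an orthonormal eigenbasis for $A(x)$, and concludes $\sqrt{\lambda^n_j}\to\sqrt{\lambda_j}$, hence $\sqrt{A(x_n)}\to\sqrt{A(x)}$. (To promote subsequential convergence to full convergence one invokes uniqueness of the positive semidefinite square root: every subsequence has a further subsequence converging to the same limit $\sqrt{A(x)}$.) Your approach sidesteps eigenbases entirely via Weierstrass approximation of $\lambda\mapsto\sqrt{\lambda}$ on a local spectral interval $[0,M]$, together with the spectral estimate $\|p(B)-\sqrt{B}\|\leq\sup_{[0,M]}|p-\sqrt{\cdot}\,|$ for Hermitian $B$ with spectrum in $[0,M]$, and the trivial continuity of $x\mapsto p(A(x))$. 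The functional-calculus route is quantitative and avoids the (minor but real) subtlety that eigenvectors need not depend continuously on $x$ when eigenvalues collide; it also shows with no extra work that $x\mapsto f(A(x))$ is continuous for any continuous $f$ on the spectrum. The paper's compactness argument is shorter to state but leaves the subsequence-to-full-sequence step and the independence of the limit from the choice of eigenbasis to the reader.
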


\begin{proof}
Suppose that $x_n\rightarrow x$ and $A(x_n)$ has an orthonormal basis $\{b^n_j\}_{j=1}^n$
with eigenvalues $\lambda_j^n.$ By taking a subsequence we can assume that
$b^n_j\rightarrow b_j$ and $\lambda^n_j\rightarrow \lambda_j.$ Then $b_j$ must be an orthonormal basis of eigenvectors for $A$ with eigenvalues $\lambda_j$. But then also
$\sqrt{\lambda^n_j}\rightarrow \sqrt{\lambda_j}$ so $\sqrt{A}(x)$ is continuous.

\end{proof}

Let $H$ denote the Hilbert space $L^2_{(0,1)}(\Omega,\gamma)$ for a continuous real function $\gamma(x)$ on an open set $\Omega\subset \mathbb C^n.$ Let $A(x)$ be a continuously varying family of positive semidefinite Hermitian matrices.
We identify a $(0,1)$ form $f=\sum_jf_jd\overline{z}_j$ with the column vector $f=(
f_1,f_2,\dots,f_n)^T.$

Let $f\in H.$ Then $Af(x)$ is in $L^2_{(0,1),loc}.$ If $Af\in H$, then we say that $f\in D_A$. 

\begin{lemma}
The operator $f\rightarrow Af$ is densely defined and has closed graph $\Gamma=
\{(f,Af);f\in D_A\}.$
\end{lemma}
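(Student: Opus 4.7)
The plan is to verify the two assertions separately, treating the operator $f \mapsto Af$ as pointwise multiplication by a continuous matrix-valued function and exploiting standard density/a.e.\ convergence arguments.

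For dense definedness, I would show that the space $D_{(0,1)}(\Omega)$ of smooth compactly supported $(0,1)$ forms is contained in $D_A$. If $f \in D_{(0,1)}(\Omega)$, its support is a compact set $K \subset \Omega$, and by continuity $A(x)$ is bounded on $K$, say $\|A(x)\| \leq M$ for $x \in K$. Then $Af$ also has support in $K$, and a pointwise estimate $|Af(x)|^2 \leq M^2 |f(x)|^2$ together with the boundedness of $e^{-\gamma}$ on $K$ gives $Af \in L^2_{(0,1)}(\Omega,\gamma) = H$. Hence $D_{(0,1)}(\Omega) \subset D_A$, and since $D_{(0,1)}(\Omega)$ is already known to be dense in $H$, so is $D_A$.

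For closedness of the graph, I would take a sequence $(f_n, A f_n) \to (f, g)$ in $H \times H$ and show $f \in D_A$ with $Af = g$. The key observation is that convergence in $H$ implies convergence in $L^2_{(0,1),\mathrm{loc}}(\Omega)$, hence, along a suitable subsequence, pointwise convergence almost everywhere. Thus I may assume $f_n(x) \to f(x)$ and $(Af_n)(x) \to g(x)$ for almost every $x \in \Omega$. Since $A(x)$ is a fixed continuous (hence locally bounded) matrix at each fixed point, the map $w \mapsto A(x) w$ is continuous in $w$, so from $f_n(x) \to f(x)$ one obtains $A(x) f_n(x) \to A(x) f(x)$ a.e. Combining, $A(x) f(x) = g(x)$ for a.e.\ $x \in \Omega$. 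Since $g \in H$, this forces $Af \in H$, so $f \in D_A$ and $Af = g$, proving that $\Gamma$ is closed in $H \times H$.

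No serious obstacle is expected; the only subtlety is the standard passage from $H$-convergence to a.e.\ pointwise convergence along a subsequence, and the fact that pointwise multiplication by a continuous pointwise-bounded matrix is continuous at each point. Both are routine measure-theoretic observations, and the continuity (rather than mere measurability) of $A$ is used solely to guarantee boundedness on compact sets for the dense-definedness step.
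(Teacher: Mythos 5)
Your proposal is correct and follows essentially the same route as the paper: density via smooth compactly supported forms (which lie in $D_A$ by local boundedness of $A$), and closedness by passing from $H$-convergence to local convergence and then comparing $Af$ with $g$. The paper concludes the closedness step by noting $Af_n \to Af$ in $L^2_{\mathrm{loc}}$ directly from local boundedness of $A$, whereas you pass to a.e.\ convergent subsequences; both are routine and equivalent in effect.
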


\begin{proof}
All $f\in H$ with continuous coefficients with compact support are in $D_A$. Hence, $A$ is densely defined. Suppose that $(f,g)$ is in the closure of the graph. Then there exist
$(f_n,g_n)\in \Gamma$ so that $f_n\rightarrow f$ and $g_n\rightarrow g.$
Then, on any compact subset, $f_n\rightarrow f$ and $Af_n=g_n\rightarrow g$ in $L^2.$
Hence $Af=g$ on any compact subset. Hence $Af=g$ on $\Omega.$ Therefore $Af\in H$,
so $f\in D_A$ and $(f,Af)\in \Gamma.$
\end{proof}

We recall the situation in Theorem 4.2.$1_\ell.$:
We write down the conclusion in the case of $(0,1)$ forms $f=\sum_j f_j d\overline{z}_j.$

$$
\int (\sum_{j,k=1}^n f_j\overline{f}_k \frac{\partial ^2 \phi}{\partial z_j\overline{z}_k}-
2''|\overline{\partial}\psi|^2|f|^2)e^{-\phi}
\leq 2'\|T^*f\|_1^2+\|Sf\|^2_3
$$ valid for all $f\in D_{(0,1)}.$

We let $A_{\phi,\psi}(x)=A(x)$ be the matrix valued function on $\Omega$ given by
$A(x)=\{\frac{\partial ^2 \phi}{\partial z_j\overline{z}_k}\}-2''|\overline\partial\psi|^2 I.$\\

(*)\;\;\;We make the assumption that $A(x)=\{a_{jk}\}$ is positive semidefinite.\\

\bea
\sum_{jk}a_{jk}f_j\overline{f}_k & = & \sum_j f_j\sum_ka_{jk}\overline{f}_k\\
& = & \sum_j f_j(A\overline{f})_j\\
& = & <f,\overline{A}f>\\
\eea

Now, $\overline{A}=\overline{\overline{A}^T}=\overline{\overline{A}}^T$ so $\overline{A}$ is also Hermitian. In fact if $b_j$ are the eigenvectors of $A$, then $\overline{b}_j$ are eigenvectors of $\overline{A}$ and the eigenvalues are the same. Hence $\overline{A}$ is positive semidefinite.
We can write $\overline{A}=B^2$, where $B={\sqrt{\overline{A}}}$ and $B$ is positive semidefinite.
Set $C=Be^{-\psi/2}.$ Then $C(x)$ is still positive semidefinite.
We then get that 

\bea
\sum_{jk}a_{j,k}f_j\overline{f}_ke^{-\phi} & = & <f,\overline{A}f>e^{-\phi}\\
& = & <f,B^2f>e^{-\phi}\\
& = & <Bf,Bf>e^{-\phi_2-\psi}\\
& = & |Cf|^2e^{-\phi_2}\\
\eea

Hence we have for the operator $C$ that
$$
\int |Cf|^2e^{-\phi_2}
\leq 2'\|T^*f\|_1^2+\|Sf\|^2_3
$$ valid for all $f\in D_{(0,1)}.$

We next prove:

\begin{lemma}
Suppose that $f\in D_{T^*}\cap D_S$. Then $f\in D_C$ for the $L^2$ space with weight $\phi_2$ and
$$
\|Cf\|^2_{\phi_2} \leq 2'\|T^*f\|^2_{\phi_1}+\|Sf\|^2_{\phi_3}.
$$
\end{lemma}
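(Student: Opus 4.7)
The plan is to extend the inequality
$$\int |Cf|^2 e^{-\phi_2}\,d\lambda \leq 2'\|T^*f\|_1^2 + \|Sf\|_3^2,$$
which is already established for all smooth compactly supported $(0,1)$-forms $f\in D_{(0,1)}$, to every $f\in D_{T^*}\cap D_S$ by a density/approximation argument. The natural density tool is Theorem~4.1.$3_h$, applied simultaneously in both graph norms to the same $f$.

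First I would verify that for our weights $\phi_1=\phi-2\psi$, $\phi_2=\phi-\psi$, $\phi_3=\phi$ with $\psi$ chosen so that $\sum_k |\partial\eta_\nu/\partial\overline z_k|^2 \leq e^\psi$, the hypotheses (4.1.6)$'$ and (4.1.6)$''$ of Theorem~4.1.$3_h$ hold (with $r_1=r_2=r_3=2$, so the finite-volume assumption is irrelevant). Then, inspecting the construction $\hat f = (\eta_{\nu_0} f)*\chi_\epsilon$ in the proof of Theorem~4.1.$3_h$: applied with the single datum $f\in D_{T^*}\cap D_S$, the argument produces a single sequence $f_n\in D_{(0,1)}$ such that simultaneously
$$f_n \to f \text{ in } G_2,\qquad T^*f_n \to T^*f \text{ in } G_1',\qquad Sf_n \to Sf \text{ in } G_3.$$
This is because the smoothing/cutoff preserves membership in $D_S$ via Lemmas~4.1.$3_a$ and~4.1.$3_c$, and membership in $D_{T^*}$ via Lemmas~4.1.$3_b$ and~4.1.$3_g$.

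Next, since each $f_n\in D_{(0,1)}$, I apply the already-proved inequality to each $f_n$:
$$\int |Cf_n|^2 e^{-\phi_2}\,d\lambda \leq 2'\|T^*f_n\|_1^2 + \|Sf_n\|_3^2.$$
The right-hand side converges to $2'\|T^*f\|_1^2 + \|Sf\|_3^2$ by the graph-norm convergence. For the left-hand side, note that $C(z)$ is a continuous matrix-valued function (by the earlier lemma on continuity of square roots of positive semidefinite Hermitian matrices), so $C$ acts as pointwise multiplication. Since $f_n \to f$ in $L^2_{(0,1)}(\Omega,\phi_2)$, by passing to a subsequence we may assume $f_n \to f$ pointwise a.e., hence $Cf_n \to Cf$ pointwise a.e.

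Finally I would apply Fatou's lemma to $|Cf_{n_k}|^2 e^{-\phi_2}$ to obtain
$$\int |Cf|^2 e^{-\phi_2}\,d\lambda \leq \liminf_k \int |Cf_{n_k}|^2 e^{-\phi_2}\,d\lambda \leq 2'\|T^*f\|_1^2 + \|Sf\|_3^2 < \infty.$$
Finiteness of the left side shows $Cf\in L^2_{(0,1)}(\Omega,\phi_2)$, so $f\in D_C$, and the displayed inequality is exactly the claim. The only delicate point is ensuring the approximating sequence works for both operators at once, but since the construction in Theorem~4.1.$3_h$ is a single cutoff-and-convolve procedure, applying it to the fixed $f\in D_{T^*}\cap D_S$ settles this.
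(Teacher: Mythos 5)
Your proof is correct and follows essentially the same route as the paper: approximate $f$ by a single cutoff-and-convolve sequence $f_n\in D_{(0,1)}$ (Theorem 4.1.$3_h$ applied with the same datum in both slots), apply the already-established inequality to each $f_n$, and pass to the limit. The only minor variation is in the final step: the paper observes that the estimate applied to $f_n-f_m$ makes $\{Cf_n\}$ Cauchy, so it converges in $L^2_{\phi_2}$ to some $g$, which is identified with $Cf$ on compacta; you instead pass to an a.e.\ convergent subsequence and use Fatou's lemma, which gives the inequality and finiteness directly. Both are valid ways to close the argument.
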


\begin{proof}
Let $f\in D_{T^*}\cap D_S.$ Then there exist $\{f_n\}\subset D_{(0,1)}(\Omega)$ such that $f_n$ converges to $f, T^*f,S_f$ in the graph  norm. Hence $\{f_n\}$ is a Cauchy sequence and by the estimate, we see that also $\{Cf_n\}$ is a Cauchy sequence. Hence $Cf_n\rightarrow g$ for some $g$ in $L^2_{\phi_2}.$ On compact subsets of $\Omega$ we must have that $g=Cf$. Hence $Cf\in L^2_{\phi_2}$ so $f\in D_C$ and we have the estimate
$$
\|Cf\|^2_{\phi_2} \leq 2'\|T^*f\|^2_{\phi_1}+\|Sf\|^2_{\phi_3}.
$$
\end{proof}

Next we come to Theorem 1.1.4 in Hormanders 1965 Acta paper.

\begin{theorem}
Assume the conditions on $\phi,\psi$ in Theorem 4.2.$1_\ell.$ Also assume that $A_{\phi,\psi}$ is positive semidefinite.
Suppose that $g\in R_C$, $g=Ch.$ Assume also that $g\in N_S.$ Then there exists
a $u\in D_T$ so that $Tu=g$ and $\|u\|_1\leq \sqrt{2'}\|h\|_2.$
\end{theorem}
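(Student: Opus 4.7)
The plan is to mimic the duality argument used in the proof of Theorem 4.1.1', but with the a priori estimate from the preceding lemma (involving $C$) in place of the naive basic estimate. The goal is to produce a bounded linear functional on the range $R_{T^*}$, extend it by Hahn--Banach, and then use $T^{**}=T$ (Corollary 4.0.6) to read off the desired $u\in D_T$ with $Tu=g$.

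Here is the outline. First, note that $R_T\subset N_S$ (since $S\circ T=0$), so taking orthogonal complements in the Hilbert space $L^2_{(0,1)}(\Omega,\phi_2)$ gives $N_S^\perp\subset \overline{R_T}^{\,\perp}=N_{T^*}$. Thus, for any $y\in D_{T^*}$, the orthogonal decomposition $y=y_1+y_2$ with $y_1\in N_S$ and $y_2\in N_S^\perp$ satisfies $y_2\in N_{T^*}\subset D_{T^*}$ with $T^*y_2=0$; consequently $y_1\in D_{T^*}$ with $T^*y_1=T^*y$, and trivially $y_1\in D_S$ with $Sy_1=0$. Apply the preceding lemma to $y_1$ to get
\[
\|Cy_1\|_{\phi_2}^{\,2}\le 2'\|T^*y_1\|_{\phi_1}^{\,2}+\|Sy_1\|_{\phi_3}^{\,2}=2'\|T^*y\|_{\phi_1}^{\,2}.
\]

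Next, since $g=Ch$ lies in $N_S$ and $y_2\perp N_S$, we have $\langle g,y\rangle_{\phi_2}=\langle g,y_1\rangle_{\phi_2}=\langle Ch,y_1\rangle_{\phi_2}=\langle h,Cy_1\rangle_{\phi_2}$, where in the last step I use that $C$ is pointwise multiplication by a Hermitian matrix and hence self-adjoint in the weighted inner product. Cauchy--Schwarz combined with the estimate above yields
\[
|\langle g,y\rangle_{\phi_2}|\le \|h\|_{\phi_2}\,\|Cy_1\|_{\phi_2}\le \sqrt{2'}\,\|h\|_{\phi_2}\,\|T^*y\|_{\phi_1}\qquad \forall\, y\in D_{T^*}.
\]
This key inequality shows that the prescription $L(T^*y):=\langle g,y\rangle_{\phi_2}$ is well-defined on $R_{T^*}$ (if $T^*y=0$ then the right side vanishes, forcing $\langle g,y\rangle_{\phi_2}=0$) and is bounded there with norm at most $\sqrt{2'}\|h\|_{\phi_2}$.

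To finish, extend $L$ by Hahn--Banach to a continuous linear functional on all of $L^2_{(0,0)}(\Omega,\phi_1)$ of the same norm. By the Riesz representation theorem on this Hilbert space, there exists $u\in L^2_{(0,0)}(\Omega,\phi_1)$ with $\|u\|_{\phi_1}\le \sqrt{2'}\|h\|_{\phi_2}$ such that $L(w)=\langle w,u\rangle_{\phi_1}$ for all $w$. In particular $\langle T^*y,u\rangle_{\phi_1}=\langle g,y\rangle_{\phi_2}$ for every $y\in D_{T^*}$, so by the definition of the adjoint (and $T^{**}=T$ from Corollary 4.0.6), $u\in D_T$ and $Tu=g$, with the desired norm bound.

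The main obstacle is the orthogonal-splitting step: it is the one place where the hypothesis $g\in N_S$ and the inclusion $R_T\subset N_S$ genuinely cooperate to let us apply the lemma (which only gives a bound in terms of both $T^*y$ and $Sy$) and absorb the $Sy$ contribution to zero. Once that step is in hand, everything else is the standard Hilbert-space duality argument from Theorem 4.1.1', and the factor $\sqrt{2'}$ appears simply because the a priori estimate carries a factor $2'$ in front of $\|T^*f\|^2$.
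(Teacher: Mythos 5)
Your proof is correct and follows essentially the same route as the paper: you define the functional on $R_{T^*}$ via the a priori estimate with $C$, establish the key bound by exploiting that $g\in N_S$ and that the $N_S^\perp$-component of a test form lies in $N_{T^*}$ (so that only the $N_S$-component contributes, where $Sy_1=0$ kills the second term), and then finish with Hahn--Banach, Riesz, and $T^{**}=T$. The only cosmetic difference is that you make the orthogonal splitting $y=y_1+y_2$ explicit, whereas the paper phrases the same reduction as ``it suffices to consider $f\in D_{T^*}\cap N_S$.''
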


\begin{proof}
Let $g\in N_S$ be as in the theorem. Define the functional $\sigma$ by $\sigma(T^*f)=<f,g>_2$ for any $f\in D_{T^*}.$ We want to show that
$$
(**)\;\; |<f,g>_2| \leq \sqrt{2'}\|h\|_2 \|T^*f\|_1.
$$
In this case $\sigma$ extends by Hahn-Banach to a linear functional $\tau$ defined on $L^2_{(0,0)}(\Omega,\phi_1)$ such that $\tau(T^*f)= \sigma(T^*(f))$ for all $f\in D_{T^*}$ and
$\|\tau(z)\|\leq |\sqrt{2'}\|h\|_2 \|z\|_1$ for all $z.$ Since $\tau(T^*{f})= <f,g>_2$ for all $f\in D_{T^*},$ it follows that $\tau$ in $D_{T^{**}}$ and $T^{**}(\tau)= g$. By reflexivity, there is then a $u$ such that
$Tu=g.$ Moreover, $\|u\|_1=\|\tau\|_1\leq \sqrt{2'}\|h\|_2.$ 

To prove (**), observe first that if $f\perp N_S$ then $f\perp R_T$ so $f\in N_{T^*}\subset D_{T^*}$ and $T^*f=0.$
But then $<f,g>_2=0$ so (**) holds. It suffices therefore to consider $f\in D_{T^*}\cap N_S\subset D_{T^*}\cap D_S.$ By Lemma 13.5, we then also have that $f\in D_C.$ By Lemma 13.5 we see that
$$
\|Cf\|^2_2 \leq  2'\|T^*f\|^2_1
$$
since $Sf=0.$
Hence
\bea
|<g,f>_2| & = & |<Ch,f>_2|\\
& = & |<h,Cf>_2|\\
& \leq & \|h\|_2 \|Cf\|_2\\
& \leq & \sqrt{2'}\|h\|_2\|T^*f\|_1\\
\eea

\end{proof}

Suppose that $H$ is a positive definite Hermitian n by n matrix, i.e. all eigenvalues are strictly positive.
Let $f=(f_1,\dots,f_n)$ be a vector. We define $|f|^2_H= \sum_{jk}a^{jk}f_j\overline{f}_k$
where $\{a^{jk}\}$ is the inverse matrix of $H.$

\begin{theorem}
Assume the conditions on $\phi,\psi$ in Theorem 4.2.$1_\ell$ and $A_{\phi,\psi}$ is positive definite.  Assume that $g\in  L^2_{(0,1)}(\Omega,\phi_2), \overline{\partial}g=0$ and that
$\int e^\psi |g|^2_{A_{\phi,\psi}}e^{-\phi_2}<\infty.$ Then there exists a $u\in D_T$ so that
$\overline{\partial}u=g$ and
$$
\|u\|^2_{\phi_1} \leq  2'\int e^\psi |g|^2_{A_{\phi,\psi}}e^{-\phi_2}.
$$
\end{theorem}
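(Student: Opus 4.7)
The strategy is to reduce this theorem directly to the preceding one (the Hormander Acta theorem just stated) by writing $g$ in the form $Ch$ for a carefully chosen $h$, and checking that $\|h\|_{\phi_2}^2$ agrees precisely with the integral appearing on the right-hand side of the estimate.

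First, I would exploit the strengthened hypothesis — $A_{\phi,\psi}$ positive \emph{definite}, rather than merely semidefinite. This upgrades the Hermitian matrix $B = \sqrt{\overline{A}}$, and hence $C = B e^{-\psi/2}$, from being pointwise positive semidefinite to being pointwise invertible. By the continuity of $\sqrt{\cdot}$ on positive definite matrices, $C^{-1} = B^{-1} e^{\psi/2}$ varies continuously on $\Omega$. I then set $h := C^{-1}g$, interpreted pointwise as a $(0,1)$-form.

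The key calculation is a pointwise identity. Using $C^{-1} = B^{-1} e^{\psi/2}$ and the self-adjointness $B^* = B$, one has $|C^{-1}g|^2 = e^{\psi}\langle B^{-2}g, g\rangle = e^{\psi}\langle \overline{A}^{-1}g, g\rangle$. Then the same bilinear-form identity $\sum_{jk} a_{jk}f_j\overline{f}_k = \langle f, \overline{A}f\rangle$ used in the lead-up to the preceding theorem, applied instead to the inverse matrix $\{a^{jk}\}$, yields $\langle \overline{A}^{-1}g, g\rangle = |g|^2_{A_{\phi,\psi}}$. Consequently $|h|^2 = e^{\psi}|g|^2_{A_{\phi,\psi}}$, and integrating against $e^{-\phi_2}$ gives
$$\|h\|_{\phi_2}^2 \;=\; \int e^{\psi}|g|^2_{A_{\phi,\psi}} e^{-\phi_2},$$
which is finite by hypothesis. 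Hence $h \in L^2_{(0,1)}(\Omega,\phi_2)$ and $g = Ch \in R_C$.

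Finally, the assumption $\overline{\partial} g = 0$ (combined with $g \in L^2_{(0,1)}(\Omega,\phi_2)$) places $g$ in $D_S$ with $Sg = 0$, i.e.\ $g \in N_S$. Both hypotheses of the preceding theorem are now satisfied, so it supplies $u \in D_T$ with $Tu = \overline{\partial}u = g$ and $\|u\|_{\phi_1} \le \sqrt{2'}\,\|h\|_{\phi_2}$. Squaring this inequality and substituting the computed value of $\|h\|_{\phi_2}^2$ yields precisely the claimed bound. I do not anticipate any substantive obstacle: the positive definiteness of $A_{\phi,\psi}$ is exactly what makes $h = C^{-1}g$ an admissible, well-defined element, and the only piece of bookkeeping that requires care is the conjugate-inverse identification $\langle \overline{A}^{-1}g,g\rangle = |g|^2_{A_{\phi,\psi}}$, which is purely algebraic.
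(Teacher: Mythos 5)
Your proposal is correct and follows essentially the same route as the paper: set $h = C^{-1}g$ (exploiting positive definiteness so that $C = \sqrt{\overline{A}}\,e^{-\psi/2}$ is pointwise invertible), verify pointwise that $|h|^2 = e^{\psi}|g|^2_{A_{\phi,\psi}}$ via the Hermitian identity $B^{-2} = \overline{A}^{-1}$, and then invoke the preceding Acta theorem with $g = Ch \in R_C \cap N_S$. The only cosmetic difference is that you make the invertibility point and the membership $g \in N_S$ explicit, which the paper leaves implicit.
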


\begin{proof}
Let $B=\sqrt{\overline{A_{\phi,\psi}}}, C=Be^{-\psi/2}.$
We show that $g\in R_C,$ $g=Ch$ for $\|h\|_{\phi_2}<\infty.$
In fact, let $h=e^{\psi/2}B^{-1}g=e^{\psi/2}(e^{\psi/2}C)^{-1}g=C^{-1}g.$ 
Then

\bea
\|h\|_2^2 & = & \int e^{\psi}<B^{-1}g,B^{-1}g>e^{-\phi_2}\\
& = & \int e^{\psi}<g,B^{-2}g>e^{-\phi_2}\\
& = & \int e^{\psi}<g,\overline{A}^{-1}g>e^{-\phi_2}\\
& = & \int e^{\psi} <g,\overline{(A^{-1}\overline{g})}>e^{-\phi_2}\\
& = & \int e^{\psi}\sum_{jk}a^{jk}g_j\overline{g}_ke^{-\phi_2}\\
& = & \int e^{\psi}|g|^2_{A_{\phi,\psi}}e^{-\phi_2}\\
& < & \infty.\\
\eea

Since $g=Ch,$ we see that $h\in D_C$ and $g=Ch\in R_C$.
Therefore, by Theorem 13.6, there exists $u\in D_T$ so that
$Tu=g$ and  $\|u\|^2_{\phi_1}\leq 2'\|h\|^2_2= 2'\int e^{\psi}|g|^2_A e^{-\phi_2}.$\\

\end{proof}

We next eliminate the function $\psi$ and replace $2'=1+c$ by it's limiting value $1$ as $c\rightarrow 0.$
We note that this will be possible even though $2''\rightarrow \infty.$

\begin{theorem}
Let $\Omega$ be a pseudoconvex domain in $\mathbb C^n.$ Suppose that $\phi\in \mathcal C^2(\Omega)$ is strictly plurisubharmonic with Hessian matrix $A$. Then if $f=\sum_{j=1}^n f_jd\overline{z}_j$ is $\overline\partial $ closed and $\int_\Omega |f|^2_Ae^{-\phi}<\infty$ then there exists a function $u$, so that $\overline{\partial} u=f$ and
$$
\int|u|^2 e^{-\phi} \leq \int |f|^2_Ae^{-\phi}.
$$
\end{theorem}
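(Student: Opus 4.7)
The strategy is to derive this sharp estimate as a limit of Theorem 13.7, driving both the auxiliary function $\psi$ and the parameter $c = 2' - 1$ to zero while preserving the positivity of $A_{\phi,\psi}$ that Theorem 13.7 requires.  The key flexibility is that the $\psi$ entering Theorem 4.2.$1_\ell$ may be chosen to vanish identically on any preassigned compact set $K \subset \Omega$; on $K$ one then has $A_{\phi,\psi} = A$ and the weight $e^{-\phi_{1}}$ reduces to $e^{-\phi}$, so the inequality produced on $K$ is automatically in the desired form.

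By a standard smoothing argument I replace the $\mathcal{C}^{2}$ function $\phi$ by smooth strictly plurisubharmonic $\phi_{m}$ converging to $\phi$ locally uniformly, with Hessians $A_{m} \to A$ locally uniformly; once the theorem is established for each $\phi_{m}$, a weak $L^{2}_{\mathrm{loc}}$-limit argument in the spirit of Corollary 4.2.$1_q$ and Theorem 4.2.$1_n$, using weak lower semicontinuity of the norm and the closedness of $\overline{\partial}$, yields the conclusion for $\phi$.  So I assume $\phi \in \mathcal{C}^{\infty}(\Omega)$.

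Fix a compact $K \subset \Omega$ and a relatively compact pseudoconvex $\Omega' \Subset \Omega$ containing $K$, obtained as a sublevel set of a smooth plurisubharmonic exhaustion.  Construct cutoffs $\{\eta_{\nu}\} \subset \mathcal{C}^{\infty}_{c}(\Omega')$ with $\eta_{\nu} \equiv 1$ on $K$ for every $\nu$, exhausting $\Omega'$, and whose $\overline{\partial}$-derivatives are dominated by a fixed smooth function on $\Omega'$.  For $\delta > 0$ small, choose $\psi \in \mathcal{C}^{\infty}(\Omega')$ with $\psi \equiv 0$ on $K$, $e^{\psi} \geq \sum_{k} |\partial \eta_{\nu}/\partial \overline{z}_{k}|^{2}$ on $\Omega'$ for all $\nu$, and $|\overline{\partial}\psi|^{2} \leq \delta$ throughout $\Omega'$; the last condition is achieved by arranging the transition of $\psi$ from $0$ to its bulk value over a slab of thickness of order at least $1/\sqrt{\delta}$, which forces $\Omega'$ to be chosen large enough depending on $\delta$.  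Pick $c > 0$ so small that $(1+1/c)\delta$ is below the minimum eigenvalue of $A$ on $\overline{\Omega'}$; then
\[
A_{\phi,\psi} = A - (1+1/c)|\overline{\partial}\psi|^{2} I \geq \tfrac{1}{2} A \quad\text{on } \Omega',
\]
giving the pointwise domination $|f|^{2}_{A_{\phi,\psi}} \leq 2 |f|^{2}_{A}$.  Theorem 13.7 applied on $\Omega'$ produces $u \in L^{2}_{\mathrm{loc}}(\Omega', e^{-\phi_{1}})$ with $\overline{\partial}u = f$ and
\[
\int_{\Omega'} |u|^{2} e^{2\psi - \phi} \,\leq\, (1+c) \int_{\Omega'} |f|^{2}_{A_{\phi,\psi}}\, e^{2\psi - \phi}.
\]
Restricting the left side to $K$ gives $\int_{K} |u|^{2} e^{-\phi}$; dominated convergence (using the bound $|f|^{2}_{A_{\phi,\psi}} \leq 2 |f|^{2}_{A}$) shows that as $\delta, c \to 0$ the right side tends to $\int_{\Omega} |f|^{2}_{A} e^{-\phi}$.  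Letting $K_{j} \nearrow \Omega$ with matching parameters $(\Omega'_{j}, \delta_{j}, c_{j})$, extracting a weakly convergent subsequence in $L^{2}_{\mathrm{loc}}(\Omega, e^{-\phi})$ of the corresponding solutions, and combining weak lower semicontinuity with the closedness of $\overline{\partial}$ yields the desired $u$ on $\Omega$ with the sharp estimate.

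\textbf{Main obstacle.}  The subtlest step is the joint construction of $\{\eta_{\nu}\}$ and $\psi$: one simultaneously needs $\psi \equiv 0$ on $K$, $e^{\psi}$ large enough to dominate the $\overline{\partial}$-derivatives of a single sequence of cutoffs exhausting $\Omega'$, and $|\overline{\partial}\psi|^{2} \leq \delta$ arbitrarily small.  The tension between the bulk value of $\psi$ (which must be large) and its gradient (which must be small) forces the transition of $\psi$ to occupy a thick slab inside $\Omega'$ and disjoint from $K$, which in turn requires $\Omega'$ to grow as $\delta$ shrinks.  Once this geometric arrangement is made, the passage to the limit in Theorem 13.7 is routine.
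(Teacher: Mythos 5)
There is a genuine gap, and it lies precisely at the ``subtlest step'' you identify: the three conditions you impose on $\psi$ are mutually incompatible.  On one hand, $\psi\equiv0$ on $K$ and $|\overline\partial\psi|^2\leq\delta$ on a bounded domain $\Omega'$ force $\psi$ to be bounded, roughly by $\sqrt\delta\cdot\mathrm{diam}(\Omega')$.  On the other hand, the density condition $e^{\psi}\geq\sum_k|\partial\eta_\nu/\partial\overline z_k|^2$ for a sequence $\eta_\nu$ exhausting $\Omega'$ forces $\psi(z)\to\infty$ as $z\to\partial\Omega'$: since $\Omega'$ is bounded, the transition annulus of $\eta_\nu$ has width $\to0$, so $\sup_\nu|\overline\partial\eta_\nu(z)|\gtrsim d(z,\partial\Omega')^{-1}$, and any admissible $\psi$ must blow up logarithmically at the boundary.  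There is simply no $\psi$ that is flat on $K$, has small gradient, and still dominates the cutoff derivatives all the way to $\partial\Omega'$.  Your remark about ``arranging the transition of $\psi$ from $0$ to its bulk value over a slab of thickness $1/\sqrt\delta$'' presupposes a finite bulk value that does not exist.  Even setting this aside, the subsequent dominated-convergence step would not close: the right-hand side of the estimate from Theorem~13.7 carries the factor $e^{2\psi}$, which is unbounded on $\Omega'$, so there is no integrable dominant uniform in $\delta$, and the claimed limit $\int_\Omega|f|^2_Ae^{-\phi}$ is not justified.

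The paper's actual proof circumvents both difficulties at once by \emph{modifying the weight rather than taming $\psi$}.  Fixing the exhaustion $K_j=\{\rho\leq j\}$, it chooses cutoffs $\eta_\nu\equiv1$ on $K_{j+1}$ and a corresponding $\psi\geq0$ vanishing on $K_{j+1}$ (with no attempt to bound its gradient), and then replaces $\phi$ by
$\widetilde\phi=\Lambda\cdot(\phi*\chi_\epsilon)+\delta\|z\|^2+\Sigma\circ\rho$, where $\Sigma$ is a sufficiently convex increasing function vanishing for $t\leq j$.  The term $\Sigma\circ\rho$ is chosen large and strongly plurisubharmonic outside $K_{j+1}$, which yields the two pointwise inequalities that your argument lacks: $\widetilde\phi\geq\phi+2\psi$, which absorbs the dangerous $e^{2\psi}$ factor so that the right-hand side of Theorem~13.7 is bounded by $\int_\Omega|f|^2_Ae^{-\phi}$, and $A_{\widetilde\phi,\psi}\geq A_{\phi,0}$, which gives $|f|^2_{A_{\widetilde\phi,\psi}}\leq|f|^2_A$.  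On $K_j$ the modification is negligible ($\widetilde\phi=\phi*\chi_\epsilon+\delta\|z\|^2$, close to $\phi$), so restricting the left-hand estimate to $K_j$ gives $(1-\tfrac1j)\int_{K_j}|u_j|^2e^{-\phi}\leq(1+\tfrac1j)\int_\Omega|f|^2_Ae^{-\phi}$, and a normal-families argument on the holomorphic differences $u_j-u_k$ produces the limit solution.  The essential idea you are missing is that the unbounded $\psi$ is unavoidable and must be \emph{compensated by inflating the weight away from the compact set}, not suppressed by shrinking its gradient.
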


\section{Hormander Acta cont., Chen's proof of Ohsawa-Takegoshi}

Fix a smooth strictly plurisubharmonic function $\rho$ so that $\{\rho\leq c\}$ is compact in $\Omega$
for real number  $c.$ Set $K_j=\{\rho\leq j\},j\geq 2.$ We prove first:
\begin{lemma}
There exists for each $j\geq 2$ a solution $u_j$ for the equation $\overline{\partial}u=f$ in $\Omega$
such that 
$$
(1-\frac{1}{j})\int_{K_j}|u_j|^2 e^{-\phi}\leq (1+\frac{1}{j})\int_\Omega |f|^2_Ae^{-\phi}.
$$
\end{lemma}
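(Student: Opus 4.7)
The plan is to apply the $\psi$-weighted existence theorem of the previous section (the one giving the bound $\|u\|^2_{\phi_1} \leq 2'\int e^\psi |g|^2_{A_{\phi,\psi}} e^{-\phi_2}$) to a modified weight $\tilde\phi_j = \phi + \chi_j$ together with an auxiliary function $\psi_j$, where both $\psi_j$ and $\chi_j$ are arranged to vanish on $K_j$ and so that the matrix $A_{\tilde\phi_j,\psi_j}$ continues to dominate the original Hessian $A$. Taking $c = 1/j$ in the ``large constant, small constant'' Lemma 4.2.$1_i$ gives $2' = 1 + 1/j$ and $2'' = 1 + j$, which are exactly the constants appearing in the target inequality.

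First I would fix a smooth strictly plurisubharmonic exhaustion $\rho$ of $\Omega$ with $K_j = \{\rho \leq j\}$, and then choose smooth, convex, nondecreasing functions $\mu, \lambda : \mathbb R \to \mathbb R_{\geq 0}$ vanishing on $(-\infty, j]$. Set $\psi_j = \mu\circ\rho$ and $\chi_j = \lambda\circ\rho$. Both vanish on $K_j$ and both are plurisubharmonic, and by picking $\mu$ with fast enough growth on each shell $L_k = \{k \leq \rho \leq k+1\}$ we ensure that the cutoff sequence $\eta_\nu = \xi_\nu(\rho)$ required by Theorem 4.2.$1_\ell$ satisfies $\sum_k |\partial\eta_\nu/\partial\overline{z}_k|^2 \leq e^{\psi_j}$. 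The strict plurisubharmonicity of $\rho$ gives $\operatorname{Hess}(\rho) \geq m(z) I$ for a positive continuous $m$, so choosing $\lambda$ to grow sufficiently fast relative to $\mu$ on each shell $L_k$ (a quantitative refinement of Lemma 4.2.$1_o$) produces
\[
\chi_j \geq 2\psi_j \quad\text{and}\quad \operatorname{Hess}(\chi_j) \geq (1+j)\,|\overline{\partial}\psi_j|^2\, I
\]
throughout $\Omega$.

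Setting $\tilde\phi_j = \phi + \chi_j$, we have $A_{\tilde\phi_j,\psi_j} = A + \operatorname{Hess}(\chi_j) - (1+j)|\overline{\partial}\psi_j|^2 I \geq A$, so $A_{\tilde\phi_j,\psi_j}$ is positive definite and $|f|^2_{A_{\tilde\phi_j,\psi_j}} \leq |f|^2_A$ pointwise (since $B \geq A > 0$ implies $B^{-1} \leq A^{-1}$). Moreover $-\tilde\phi_j + 2\psi_j = -\phi - \chi_j + 2\psi_j \leq -\phi$ because $\chi_j \geq 2\psi_j$, so the finite-norm hypothesis of the preceding theorem is ensured by $\int_\Omega |f|^2_A e^{-\phi} < \infty$. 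That theorem then produces $u_j \in D_T$ with $\overline{\partial}u_j = f$ and
\[
\int_\Omega |u_j|^2 e^{-\tilde\phi_j + 2\psi_j} \leq (1+1/j) \int_\Omega |f|^2_{A_{\tilde\phi_j,\psi_j}} e^{-\tilde\phi_j + 2\psi_j}.
\]
On $K_j$, where $\chi_j = \psi_j = 0$, the left-hand weight reduces to $e^{-\phi}$, while globally the right-hand integrand is bounded by $|f|^2_A e^{-\phi}$; hence
\[
\int_{K_j} |u_j|^2 e^{-\phi} \leq (1+1/j)\int_\Omega |f|^2_A e^{-\phi},
\]
and multiplying by $1 - 1/j \leq 1$ yields the lemma.

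The hard part will be the simultaneous tuning of $\mu$ and $\lambda$: $\psi_j = \mu\circ\rho$ must be large enough to accommodate the cutoff sequence $\eta_\nu$, while $\chi_j = \lambda\circ\rho$ must be large enough that both $\chi_j \geq 2\psi_j$ and $\operatorname{Hess}(\chi_j) \geq (1+j)|\overline{\partial}\psi_j|^2 I$ hold pointwise. On the shell $L_k$ one has $|\overline{\partial}\psi_j|^2 = |\mu'(\rho)|^2 |\overline{\partial}\rho|^2$ and $\operatorname{Hess}(\chi_j) \geq \lambda'(\rho) m(z) I$, so the matching amounts to picking $\lambda'$ on $[k,k+1]$ to dominate $(1+j)\sup_{L_k}(|\mu'|^2|\overline{\partial}\rho|^2/m)$, and $\lambda$ itself to dominate $2\mu$. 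This is in the spirit of Lemma 4.2.$1_o$ but requires careful bookkeeping of the constants on successive shells; it is the only technical step of the argument.
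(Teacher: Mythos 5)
The overall architecture is right (modify the weight so that it agrees with $\phi$ on $K_j$, arrange the new Hessian to dominate $A$ after subtracting $(1+j)|\overline{\partial}\psi|^2 I$, apply the preceding $\psi$-weighted existence theorem with $2'=1+\tfrac1j$), and your algebra for the weight comparison on $K_j$ and for $A_{\tilde\phi_j,\psi_j}\geq A$ is correct. But there is a genuine gap: in the Theorem you are proving, $\phi$ is only assumed $\mathcal C^2$, while the existence theorem you invoke (via Theorem 4.2.$1_\ell$ and the approximation Lemmas 4.1.$3_f$--$3_h$) requires the weight to be $\mathcal C^\infty$. Your modified weight $\tilde\phi_j=\phi+\chi_j$ inherits only $\mathcal C^2$ regularity from $\phi$, so the theorem as stated does not apply to it. This is precisely why the paper's proof does not simply add a correction to $\phi$: it replaces $\phi$ near $K_j$ by the mollification $\Lambda\cdot(\phi*\chi_\epsilon)+\delta\|z\|^2$ (together with $\Sigma\circ\rho$ outside), which is smooth, and then chooses $\epsilon$ and $\delta$ so that this differs from $\phi$ by less than $1/j$ on $K_j$.

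That mollification error is exactly the source of the factor $(1-\tfrac1j)$ on the left-hand side of the lemma: on $K_j$ the working weight is $\phi*\chi_\epsilon+\delta\|z\|^2$, not $\phi$, and the inequality $e^{-\tilde\phi_1}\geq e^{-1/j}e^{-\phi}\geq(1-\tfrac1j)e^{-\phi}$ is what produces it. In your argument the factor $(1-\tfrac1j)$ is tacked on at the end by multiplying a clean inequality by a number $\leq 1$, which serves no purpose and is a symptom of the missing mollification step. To repair your proof, mollify $\phi$ (with a cutoff $\Lambda$ to keep the convolution defined on a neighborhood of $K_{j+2}$, and with the small $\delta\|z\|^2$ term to restore positivity of the Hessian comparison for small $\epsilon$), and then the $(1-\tfrac1j)$ factor appears where it should. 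The shell-by-shell bookkeeping for $\mu$ and $\lambda$ that you flag as the hard part is indeed needed, but it is the same technique as Lemma 4.2.$1_o$/$1_p$ and is not where the argument is incomplete.
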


\begin{proof}
Choose the sequence $\eta_\nu$ so that all $\eta_\nu=1$ on $K_{j+1}.$
The function $\psi$ satisfies the inequalities $\sum_{k=1}^n |\frac{\partial \eta_\nu}{\partial \overline{z}_k}|^2\leq e^\psi.$ We will choose such a $\psi$ so that $\psi\geq 0$ in $\Omega$ and $\psi=0$ on $K_{j+1}.$ Let $\Sigma$ be a smooth convex increasing function $\Sigma(t)$ which vanishes when $t\leq j$.We will make $\Sigma$ sufficiently convex for the following to hold:\\

Let $\Lambda$ be a smooth real function with $\Lambda=1$ on $K_{j+1}$ and $\Lambda=0$ outside $K_{j+2}.$ Choose $\delta>0$ small enough so that $\delta\|z\|^2<\frac{1}{2j}$ on $K_j.$
Then if $\epsilon$ is small enough and $\tilde{\phi}=\Lambda\cdot (\phi*\chi_\epsilon)+\delta \|z\|^2
+\Sigma \circ \rho$ then\\
(1) $\tilde{\phi}\geq \phi+2\psi\geq \phi,$\\
(2) $A_{\tilde{\phi},\psi}\geq A_{\phi,0}.$\\
(3) $\tilde{\phi}\in\mathcal C^\infty(\Omega).$\\

This implies that $|f|^2_{A_{\tilde{\phi},\psi}}\leq |f|^2_{\phi,0}=|f|^2_A$

We apply Theorem 13.7 using the functions $\tilde{\phi}$ and $\psi$ and $2'=1+\frac{1}{j}.$
Note that

\bea
\int_\Omega e^\psi |f|^2_{A_{\tilde{\phi},\psi}}e^{-\tilde{\phi}_2} & \leq & \int_\Omega e^{\psi}|f|^2_A
e^{-(\tilde\phi-\psi)}\\
& \leq & \int_\Omega e^{\psi}|f|^2_A
e^{-(\tilde\phi-2\psi)}\\
& \leq & \int_\Omega |f|^2e^{-\phi}.
\eea

Hence we can and find $u_j\in L^2(\Omega,\tilde{\phi}_1)$ so that
$\overline{\partial}u_j=f$ in $\Omega$ and
$$
\int_\Omega|u_j|^2e^{-\tilde{\phi}_1}\leq (1+\frac{1}{j})\int_\Omega |f|^2_{A}e^{-\phi}.
$$

On $K_j$ we have that $\tilde{\phi}_1=\tilde{\phi}-\psi=\tilde{\phi}=\phi*\chi_\epsilon+\delta \|z\|^2.$
Hence for small enough $\epsilon$ we have that $|\tilde\phi_1-\phi|<\frac{1}{j}$ on $K_j.$ Therefore
\bea
\int_\Omega |u_j|^2e^{-\tilde{\phi}_1} & \geq & \int_{K_j}
|u_j|^2 e^{(\phi-\tilde{\phi}_1)-\phi}\\
& \geq & e^{-1/j}\int_{K_j}|u_j|^2e^{-\phi}\\
& \geq & (1-\frac{1}{j})\int_{K_j}|u_j|^2e^{-\phi}\\
\eea

\end{proof}

Next we prove the Theorem.
\begin{proof}
Since $\overline{\partial}u_j=f=\overline{\partial}u_k$ on $\Omega$ for all $j,k$,
it follows that all the functions $u_j-u_k$ are holomorphic on $\Omega.$ Also they are uniformly bounded in $L^2$ norm on compact subsets and therefore in $L^1$ norm as well. Hence by Corollary 1.2.6
there is a subsequence converging uniformly on compact sets to some $u$. Then $\overline\partial u=f$
and 
$$
\int_{\Omega}|u|^2 e^{-\phi}\leq \int_\Omega |g|^2_{A}e^{-\phi}.
$$

\end{proof}

Our next topic is the Ohsawa-Takegoshi theorem. We follow the proof by Bo-Yong Chen. \\
arXiv: 1105.2430v1 [math.CV] 12 May 2011.\\

We start with some preliminary formulas
valid in $\mathbb C.$\\

Let $0<r<e^{-1/2}.$ Also suppose the $0<s<1$ is small enough that $r^2+s^2<e^{-1}.$
Define $\rho_s(z):=\log(|z|^2+s^2)$ for $z\in \mathbb C, |z|<r.$ Then
$2\log s\leq \rho_s< -1,$ so $1<-\rho_s\leq 2\log \frac{1}{s}.$ 
Set $\eta_s:= -\rho_s +\log (-\rho_s).$ 

\begin{lemma}
$1<-\rho_s<\eta_s<-2\rho_s.$
\end{lemma}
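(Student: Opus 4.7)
The plan is to reduce everything to elementary inequalities for a single real variable $t := -\rho_s(z)$, after first establishing that $t>1$ from the hypothesis $r^2+s^2<e^{-1}$.

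First I would observe that for $|z|<r$ we have $|z|^2+s^2 < r^2+s^2 < e^{-1}$, so $\rho_s(z) = \log(|z|^2+s^2) < -1$, which gives $-\rho_s > 1$. This establishes the first inequality $1 < -\rho_s$ directly, and shows that $t := -\rho_s$ lies in the range $(1,\infty)$ (more precisely in $(1, 2\log(1/s)]$, but only $t>1$ is needed).

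Next, the middle inequality $-\rho_s < \eta_s$ is immediate from the definition $\eta_s = -\rho_s + \log(-\rho_s)$: since $-\rho_s > 1$, the term $\log(-\rho_s) > 0$, so adding it to $-\rho_s$ strictly increases the value.

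For the final inequality $\eta_s < -2\rho_s$, I rewrite it as $\log(-\rho_s) < -\rho_s$, i.e. $\log t < t$ for $t>1$. This is the standard inequality: the function $f(t) = t - \log t$ satisfies $f(1)=1>0$ and $f'(t) = 1 - 1/t > 0$ for $t>1$, so $f(t) > 0$ throughout, giving $\log t < t$. There is no real obstacle here — the whole lemma is just bookkeeping that exploits the choice of $r,s$ to put $-\rho_s$ safely above $1$ so that $\log(-\rho_s)$ is positive but dominated by $-\rho_s$ itself.
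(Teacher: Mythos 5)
Your proof is correct and follows essentially the same route as the paper, which simply cites the chain $0<\log x<x-1<x$ for $x>1$ (applied with $x=-\rho_s$) to get all three inequalities at once. You just unpack the same elementary facts step by step, replacing the cited inequality $\log x<x$ with a short monotonicity argument.
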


\begin{proof}
This follows from the inequality $0<\log x<x-1<x,$ valid for $x>1.$
\end{proof}

Hence $1<\eta_s<4\log \frac{1}{s}.$ \\

Let $\psi_s:= -\log \eta_s.$
Then $-\log(4\log\frac{1}{s})<\psi_s<0.$
We have the following formulas: We skip the index $s$.

\begin{lemma}
\bea
(1)\;\;\rho_{\overline{z}} & = & \frac{z}{|z|^2+s^2}\\
(2)\;\; \rho_{z\overline{z}} & = & \frac{s^2}{(|z|^2+s^2)^2}\\
(3)\;\; \eta_{{z}} & = & -(1+(-\rho)^{-1})\rho_{{z}}\\
(4)\;\; \eta_{z\overline{z}} & = & -(1+(-\rho)^{-1})\rho_{z\overline{z}}-\frac{|\rho_z|^2}{\rho^2}\\
(5)\;\; \psi_{z\overline{z}} & = & -\frac{\eta_{z\overline{z}}}{\eta}+\frac{|\eta_z|^2}{\eta^2}\\
& = & (1+(-\rho)^{-1})\frac{\rho_{z\overline{z}}}{\eta}+\frac{|\rho_z|^2}{\eta \rho^2}+
\frac{|\eta_z|^2}{\eta^2}\\
(6)\;\; \psi_{z\overline{z}} & \geq & \left(\frac{1}{\eta^2}+\frac{1}{\eta (-\rho+1)^2}\right)|\eta_z|^2\\
(7)\;\; \psi_{z\overline{z}} & \geq & \frac{s^2}{\eta (|z|^2+s^2)^2}\\
(8)\;\; |\psi_z|^2 & = & \frac{(1+(-\rho)^{-1})^2}{\eta^2}|\rho_z|^2\\
(9)\;\; \psi_{z\overline{z}} & \geq & \frac{|\rho_z|^2}{\eta}\;\mbox{if} \; \; |z|^2\leq s^2\\
\eea
\end{lemma}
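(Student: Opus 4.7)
The plan is to verify (1)--(9) as a sequence of direct calculations driven by the chain rule, since the whole lemma is essentially bookkeeping built on the definitions $\rho_s(z)=\log(|z|^2+s^2)$, $\eta_s=-\rho_s+\log(-\rho_s)$, $\psi_s=-\log\eta_s$, together with the fact that all three functions are real-valued (so $f_{\overline z}=\overline{f_z}$).

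First I would establish (1) and (2) by differentiating $\log(z\overline z+s^2)$ directly: $\rho_{\overline z}=z/(|z|^2+s^2)$, and then $\rho_{z\overline z}=\partial_z[z/(z\overline z+s^2)]=s^2/(|z|^2+s^2)^2$. Note this already shows $\rho_{z\overline z}\ge 0$ and that $\rho<0$ on the disc (since $|z|^2+s^2<e^{-1}$), which will be used repeatedly. Next, I apply the chain rule to $\eta=-\rho+\log(-\rho)$ to get $\eta_z=-\rho_z+(-\rho_z)/(-\rho)=-(1+(-\rho)^{-1})\rho_z$, which is (3). Differentiating (3) in $\overline z$ and using $\partial_{\overline z}(-\rho)^{-1}=(-\rho)^{-2}\rho_{\overline z}=\rho_{\overline z}/\rho^2$ together with $\rho_z\rho_{\overline z}=|\rho_z|^2$ yields (4).

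For (5), I would use $\psi=-\log\eta$, so $\psi_z=-\eta_z/\eta$ and hence $\psi_{z\overline z}=-\eta_{z\overline z}/\eta+|\eta_z|^2/\eta^2$; substituting (4) turns this into the second form displayed in (5). Then (8) drops out of $\psi_z=-\eta_z/\eta$ squared together with (3). The key algebraic observation, used in the remaining inequalities, is that (3) gives $|\eta_z|^2=(-\rho+1)^2|\rho_z|^2/\rho^2$, so
\[
\frac{|\rho_z|^2}{\eta\rho^2}=\frac{|\eta_z|^2}{\eta(-\rho+1)^2}.
\]
Combined with the fact that $(1+(-\rho)^{-1})\rho_{z\overline z}/\eta\ge 0$, this immediately gives (6).

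Inequality (7) comes from dropping the first two nonnegative terms in the second form of (5) and inserting (2), since $1+(-\rho)^{-1}\ge 1$. For (9), under the hypothesis $|z|^2\le s^2$ one has $|\rho_z|^2=|z|^2/(|z|^2+s^2)^2\le s^2/(|z|^2+s^2)^2=\rho_{z\overline z}$, and then the second form of (5) gives $\psi_{z\overline z}\ge\rho_{z\overline z}/\eta\ge|\rho_z|^2/\eta$.

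I don't anticipate any real obstacle: every identity is a one-line application of the chain rule and every inequality amounts to discarding nonnegative terms in the formula (5) after the rewrite of $|\rho_z|^2/\rho^2$ in terms of $|\eta_z|^2$. The only item demanding slight care is the sign tracking in (4), because $\rho<0$ forces one to use $(-\rho)^{-1}$ rather than $\rho^{-1}$ when differentiating $\log(-\rho)$; once that is handled, the rest is substitution.
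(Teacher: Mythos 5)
Your proof is correct and follows essentially the same path as the paper: direct chain-rule computations for (1)--(5) and (8), followed by discarding nonnegative terms in the second form of (5) and using the rewrite $|\rho_z|^2/\rho^2=|\eta_z|^2/(-\rho+1)^2$ coming from (3) to obtain the inequalities (6), (7), (9). One small slip: for (7) you wrote ``dropping the first two nonnegative terms,'' but to land on $\rho_{z\overline z}/\eta$ you must keep the \emph{first} term of (5) and discard the last two.
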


\begin{proof}
(1) clear\\
(2) clear\\
(3) clear\\
(4) clear\\
(5) clear\\
(6):
We use (5):
\bea
\psi_{z\overline{z}} & = & (1+(-\rho)^{-1})\frac{\rho_{z\overline{z}}}{\eta}+\frac{|\rho_z|^2}{\eta \rho^2}+
\frac{|\eta_z|^2}{\eta^2}\\
& \geq & \frac{|\rho_z|^2}{\eta \rho^2}+
\frac{|\eta_z|^2}{\eta^2}\\
&\mbox{apply (3)} & \\
& = & \frac{|\eta_z|^2}{(1+(-\rho)^{-1})^2\eta \rho^2}+
\frac{|\eta_z|^2}{\eta^2}\\
\eea
(7):
We use (5): 
\bea
\psi_{z\overline{z}} & = & (1+(-\rho)^{-1})\frac{\rho_{z\overline{z}}}{\eta}+\frac{|\rho_z|^2}{\eta \rho^2}+
\frac{|\eta_z|^2}{\eta^2}\\
& \geq & (1+(-\rho)^{-1})\frac{\rho_{z\overline{z}}}{\eta}\\
& \geq & \frac{\rho_{z\overline{z}}}{\eta}\\
& \mbox{we use (2)} & \\
& = & \frac{s^2}{\eta (|z|^2+s^2)^2}\\
\eea
(8): 
\bea
\psi_{z} & = & (-\log \eta)_z\\
& = & -\frac{\eta_z}{\eta}\\
& \mbox{we use (3) }\\
& = & -\frac{-(1+(-\rho)^{-1})\rho_z}{\eta}\\
|\psi_{z}|^2 & = & \frac{(1+(-\rho)^{-1})^2 |\rho_z|^2}{\eta^2}\\
\eea
(9): We use (7):
\bea
s^2\psi_{z\overline{z}} & \geq & |z|^2 \psi_{z\overline{z}}\\
& \geq & \frac{|z|^2s^2}{\eta (|z|^2+s^2)^2}\\
& \mbox{we use (1)} & \\
& = & s^2 \frac{|\rho_z|^2}{\eta}\\
\eea
 
\end{proof}

We pick a smooth decreasing function $\chi(t), t\in \mathbb R$ such that
$\chi(t)=1$ if $t\leq \frac{1}{2}$ and $\chi(t)=0$ if $t\geq 1.$

Let $C:= 2\int_{1/2<|w|^2<1} |\chi'(|w|^2)|^2 (|w|^2+1)^2 d\lambda(w).$ 

\begin{lemma}
2$\int_{s^2/2<|z|^2<s^2}|\chi'(\frac{|z|^2}{s^2})|^2 \cdot \frac{(|z|^2+s^2)^2}{s^6}d\lambda(z)=C.$
\end{lemma}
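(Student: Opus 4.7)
The plan is to verify the identity by a direct change of variables $z = s w$, which is the only operation suggested by the structure of the two integrals (one over an annulus of radii $\{s/\sqrt 2, s\}$, the other over $\{1/\sqrt 2, 1\}$).

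First I would substitute $z = s w$, so that $d\lambda(z) = s^{2}\,d\lambda(w)$ and $|z|^{2} = s^{2}|w|^{2}$. This transforms the domain of integration $\{s^{2}/2 < |z|^{2} < s^{2}\}$ into $\{1/2 < |w|^{2} < 1\}$. Under the substitution, the integrand transforms as
\begin{align*}
\left|\chi'\!\left(\tfrac{|z|^{2}}{s^{2}}\right)\right|^{2} \cdot \frac{(|z|^{2}+s^{2})^{2}}{s^{6}}
\;=\; |\chi'(|w|^{2})|^{2} \cdot \frac{(s^{2}|w|^{2}+s^{2})^{2}}{s^{6}}
\;=\; |\chi'(|w|^{2})|^{2} \cdot \frac{s^{4}(|w|^{2}+1)^{2}}{s^{6}}.
\end{align*}

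Next I would combine the Jacobian factor $s^{2}$ from $d\lambda(z) = s^{2}\,d\lambda(w)$ with the $s^{4}/s^{6} = s^{-2}$ already present, so that all powers of $s$ cancel exactly. What remains is
\begin{align*}
2\int_{s^{2}/2<|z|^{2}<s^{2}}\!\left|\chi'\!\left(\tfrac{|z|^{2}}{s^{2}}\right)\right|^{2}\!\frac{(|z|^{2}+s^{2})^{2}}{s^{6}}\,d\lambda(z)
\;=\; 2\int_{1/2<|w|^{2}<1}|\chi'(|w|^{2})|^{2}(|w|^{2}+1)^{2}\,d\lambda(w),
\end{align*}
which is exactly the definition of $C$. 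There is no obstacle here: the identity is purely a scaling computation, and the fact that the $s$-dependence in the integrand is engineered to cancel (the factor $s^{-6}$ balances the $s^{4}$ from $(|z|^{2}+s^{2})^{2}$ and the $s^{2}$ from the area element on a $2$-real-dimensional domain in $\C$) is the whole content. No further estimates, no use of pseudoconvexity or of the $\psi$ machinery developed earlier, are needed.
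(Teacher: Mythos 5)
Your computation is correct and matches the paper's proof exactly: both use the substitution $z = sw$ and observe that the powers of $s$ cancel, reducing the integral to the defining expression for $C$. The paper states this in one line; you have simply supplied the explicit bookkeeping.
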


\begin{proof}
We use the substitution $z=sw.$ The result follows from the definition of $C.$
\end{proof}

\begin{lemma}
$$
\frac{\frac{\eta^2}{(-\rho+1)^2}}{1+\frac{\eta}{(-\rho+1)^2}}\geq \frac{1}{6}
$$
\end{lemma}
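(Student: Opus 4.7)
The plan is to clear denominators and reduce to a single-variable polynomial inequality. Set $t := -\rho$. By the setup before Lemma 14.1 we have $\rho < -1$ on the disc $|z| < r$, so $t > 1$, and by definition $\eta = t + \log t$. Multiplying numerator and denominator of the left-hand side by $(-\rho+1)^2 = (t+1)^2$, the desired inequality is equivalent to
\[
\frac{\eta^2}{(t+1)^2 + \eta} \geq \frac{1}{6},
\]
i.e.\ $6\eta^2 \geq (t+1)^2 + \eta$.

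Next I would bound $(t+1)^2$ above in terms of $\eta$ alone. Since $t > 1$ we have $\log t > 0$, hence $\eta = t + \log t > t$, and therefore $(t+1)^2 < (\eta+1)^2 = \eta^2 + 2\eta + 1$. It thus suffices to prove the cleaner inequality
\[
6\eta^2 \;\geq\; \eta^2 + 3\eta + 1, \qquad\text{i.e.}\qquad 5\eta^2 - 3\eta - 1 \geq 0.
\]

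Finally, Lemma 14.1 gives $\eta > 1$, and the derivative $(5\eta^2 - 3\eta - 1)' = 10\eta - 3$ is positive for $\eta \geq 1$, so $5\eta^2 - 3\eta - 1$ is increasing on $[1,\infty)$ and is therefore bounded below there by its value at $\eta = 1$, which is $5 - 3 - 1 = 1 > 0$. This completes the reduction. Since every step is routine algebra, there is no real obstacle; the only genuine idea is the one-line bound $\eta > t$ coming from $\log t > 0$, which decouples the two-variable inequality into an elementary quadratic in $\eta$.
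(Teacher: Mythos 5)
Your proof is correct, and it takes a genuinely (if modestly) different route from the paper. Both arguments reduce the two-variable expression to a one-variable polynomial inequality and then check the polynomial is nonnegative at the left endpoint and increasing from there. The paper uses the full bracketing $-\rho < \eta < -2\rho$: it replaces $\eta$ in the numerator by its lower bound $-\rho$ and $\eta$ in the denominator by its upper bound $-2\rho$, leaving a function of $x=-\rho$ alone, namely $x^2/(x^2+4x+1)$, which it bounds below by $1/6$ using $4x<4x^2$ and $1<x^2$ for $x>1$. You instead clear denominators and replace $(t+1)^2$ by its upper bound $(\eta+1)^2$ (using only $\eta>t=-\rho$), leaving the quadratic $5\eta^2-3\eta-1\ge 0$ in $\eta$ alone. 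Your route is slightly more economical in that it only needs the one-sided comparison $\eta>-\rho$ (equivalently $\log t>0$), whereas the paper also needs $\eta<-2\rho$ (equivalently $\log t<t$). Neither approach gives the sharp constant, and neither needs to; both are fine. One small housekeeping remark: the bound $\eta>1$ you cite from the earlier lemma is not actually needed as an input, since you already derive $\eta>t>1$ directly from $t>1$ and $\log t>0$.
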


\begin{proof}
We use Lemma 14.3:
$$
\frac{\frac{\eta^2}{(-\rho+1)^2}}{1+\frac{\eta}{(-\rho+1)^2}}\geq 
\frac{\frac{(-\rho)^2}{(-\rho+1)^2}}{1+\frac{-2\rho}{(-\rho+1)^2}}
$$

Let $x=-\rho,$ then $x>1.$
\bea
\frac{\frac{x^2}{(x+1)^2}}{1+\frac{2x}{(x+1)^2}} & = &
\frac{x^2}{x^2+4x+1}\\
& \geq & \frac{x^2}{x^2+4x^2+x^2}\\
& = & \frac{1}{6}\\
\eea

\end{proof}

Let $\Omega$ be a bounded pseudoconvex domain in $\mathbb C^n$. Suppose that $$\sup_{z\in \Omega}|z_n|<e^{-1/2}.$$ Suppose there is a smooth function $\sigma$ on $\mathbb C^n$ such that $\Omega=\{z=(z_1,\dots,z_n)=(z',z_n); \sigma(z)<0\}.$
Let $\Omega'=\{z'; (z',0)\in \Omega\}.$
Assume that $d_{z'}(\sigma)\neq 0$ when $\sigma(z',0)=0.$ Let $f(z')$ be a holomorphic function
in an open set $V\supset \overline{\Omega}'$. For all $\epsilon>0$ small enough,
the function $G(z',z_n)=\chi(|z_n|^2/\epsilon^2)  f(z')$ is smooth on the subset of $\Omega$ where $\chi>0.$ We extend
$G$ smoothly to $\Omega$ by setting $G$ equal to zero when $|z_n|\geq \epsilon.$ In fact $G$ extends smoothly to a neighborhood of $\overline{\Omega}.$ 
Let $\nu:=\overline{\partial} G.$ Also let $\phi$ be a smooth strongly plurisubharmonic function in an open set containing $\overline{\Omega}.$ Let $0<\delta<\epsilon.$ Set $\phi':=\phi+\log(|z_n|^2+\delta^2)=\phi+\rho_\delta.$ By Lemma 14.4(2), $\rho_\delta$ is subharmonic. Let $A$ denote the Hermitian matrix $\{\frac{\partial^2\phi'}{\partial z_j\overline{\partial}z_k}\}$. Then $\int_\Omega |\nu|^2_Ae^{-\phi'}<\infty$.
By Theorem 14.1 there exists a function $u\in L^2(\Omega,\phi')$ so that
$\overline{\partial} u=\nu.$ Let $N_T$ denote the nullspace of $\overline{\partial}$ in 
$L^2(\Omega,\phi').$  We can subtract functions in $N_T$ and still have solutions to the equation.
Choosing the solution $u$ which is perpendicular to $N_T$ will minimize the norm of the solution.

Lemma 14.4 (5) shows that $\psi=\psi_\epsilon$ is subharmonic. Moreover $\psi$ is smooth and bounded.
We have that $\int u\overline{h}e^{-\phi'}=0$ for all holomorphic functions $h$ in $L^2(\Omega,\phi').$

Hence $\int(ue^\psi)\overline{h}e^{-(\phi'+\psi)}=0$ for all holomorphic functions $h$ in $L^2(\Omega,\phi')$.
Since $\psi$ is bounded, it follows that this is same set of holomorphic functions as those
in $L^2(\Omega,\phi'+\psi).$ Hence $ue^{\psi}\in L^2(\Omega,\phi'+\psi)$ and is perpendicular to
the nullspace of $\overline{\partial}$ in $L^2(\Omega,\phi'+\psi).$ 
Since $\psi$ is smooth, it follows also that $\overline{\partial}(ue^\psi)=
\nu e^\psi+ue^\psi \overline{\partial}\psi$ is in $L^2(\Omega, \phi'+\psi).$ This shows that
$ue^{\psi}$ is in the domain of $T$ using the weight $e^{-(\phi'+\psi)}.$ By smoothness of $\phi'$ and strong plurisubharmonicity
we also know that $\int |\overline{\partial}(ue^{\psi})|^2_{A_{(\phi'+\psi,0)}}e^{-(\phi'+\psi)}<\infty.$ 

It follows from Theorem 14.1 that
$$
\int |ue^\psi|^2e^{-\phi'-\psi}\leq \int |\nu e^\psi+ue^\psi\overline{\partial}\psi|^2_{A_{(\phi'+\psi,0)}}e^{-\phi'-\psi}.
$$

\section{Chen's proof of Ohsawa-Takegoshi, cont.}

 Hence
$$
\int |u|^2e^{-\phi'+\psi}\leq \int |\nu+u\overline{\partial}\psi|^2_{A_{(\phi'+\psi,0)}}e^{-\phi'+\psi}.
$$

Using the small constant, large constant lemma, we get for any $r>0:$
\bea
\int_\Omega |u|^2 e^{-\phi'+\psi} & \leq & \int_{supp(\nu)}|\nu+u\overline{\partial}\psi|^2_{A_{(\phi'+\psi,0)}}e^{-\phi'+\psi}\\
& + & \int_{\Omega\setminus supp(\nu)}|\nu+u\overline{\partial}\psi|^2_{A_{(\phi'+\psi,0)}}e^{-\phi'+\psi}\\
& \leq & (1+r)\int_{supp(\nu)}|u\overline{\partial}\psi|^2_{A_{(\phi'+\psi,0)}}e^{-\phi'+\psi}\\
& + & (1+\frac{1}{r})\int_{supp(\nu)}|\nu|^2_{A_{(\phi'+\psi,0)}}e^{-\phi'+\psi}\\
& + & \int_{\Omega\setminus supp(\nu)}|u\overline{\partial}\psi|^2_{A_{(\phi'+\psi,0)}}e^{-\phi'+\psi}\\
& \leq & (1+\frac{1}{r})\int_{\Omega}|\nu|^2_{A_{(\phi'+\psi,0)}}e^{-\phi'+\psi}\\
& + & 
\int_{\Omega}|u\overline{\partial}\psi|^2_{A_{(\phi'+\psi,0)}}e^{-\phi'+\psi}\\
& + & r\int_{supp(\nu)}|u\overline{\partial}\psi|^2_{A_{(\phi'+\psi,0)}}e^{-\phi'+\psi}\\
\eea

We estimate the three integrals.
The form $\nu$ is a multiple of  $d\overline{z}_n$.
Also the function $f$ only depends on $z'.$
By Lemma 14.3 (7), if $g=(0',\alpha),$ then
$|g|^2_A \leq |g|^2 \frac{\eta_\epsilon (|z_n|^2+\epsilon^2)^2}{\epsilon^2}.$
Hence 

\bea
\int_{\Omega}|\nu|^2_Ae^{-\phi'+\psi} & \leq & 
\int_\Omega |\nu|^2 \frac{\eta_\epsilon (|z_n|^2+\epsilon^2)^2}{\epsilon^2}
e^{-(\phi+\log (|z_n|^2+\delta^2))-\log \eta_\epsilon}\\
& \leq & \int (|\chi'(|z_n|^2/\epsilon^2)|)^2 \frac{|z_n|^2}{\epsilon^4}
\frac{(|z_n|^2+\epsilon^2)^2}{\epsilon^2}\frac{1}{|z_n|^2+\delta^2}|f(z')|^2 e^{-\phi}\\
& \leq & 2\left(\int_{\frac{\epsilon^2}{2} \leq |z_n|^2\leq \epsilon^2}
|\chi'|^2 \frac{(|z_n|^2+\epsilon^2)^2}{\epsilon^6}\frac{|z_n|^2}{|z_n|^2+\delta^2}\right)
\left(\int_{\Omega'}|f|^2e^{-\phi}\right)
\eea

The reason for the factor 2 is to take into account the error term in  $\phi(z',z_n)=\phi(z',0)+\mathcal O(z_n)$ and also that we need values of $f$ in a small neighborhood of $\overline{\Omega}'$. So this is valid when $\epsilon$ is small enough. Hence, using Lemma 14.4 we get
$$(1+r^{-1})\int_{\Omega}|\nu|^2_{A_{(\phi'+\psi,0)}}e^{-\phi'+\psi}\leq
2(1+r^{-1})C\int_{\Omega'}|f|^2e^{-\phi}.
$$

We next estimate the second integral.
\bea
\int_\Omega |u\overline{\partial}\psi|^2_{A_{(\phi'+\psi,0)}}e^{-\phi'+\psi} & = &
\int |u|^2 |\overline{\partial}\psi|^2_{A_{(\phi'+\psi,0)}} e^{-\phi'+\psi}\\
\eea

We have that $A_{\phi'+\psi,0}\geq A_{\psi,0}.$ Here $A_{\psi,0}$ has only one nonzero entry, at place $(n,n)$ where the entry is $\psi_{z_n,\overline{z}_n}.$
 By 
Lemma 14.3 (6), we have $a_{nn}\geq \left(\frac{1}{\eta^2}+\frac{1}{\eta(1+(-\rho)^{-1})^2}\right)
|\eta_z|^2.$ Moreover by 14.3 (8), we have
$|\psi_z|^2= \frac{(1+(-\rho)^{-1})^2}{\eta^2}|\rho_z|^2.$ By 14.3 (3) we
have that $\frac{|\rho_z|^2}{|\eta_z|^2}=(1+(-\rho)^{-1})^{-2}$.

Hence
\bea
\int_\Omega |u\overline{\partial}\psi|^2_Ae^{-\phi'+\psi} & \leq &
\int |u|^2 \frac{(1+(-\rho)^{-1})^2}{\eta^2}|\rho_z|^2\frac{1}{\left(\frac{1}{\eta^2}+\frac{1}{\eta(1+(-\rho)^{-1})^2}\right)
|\eta_z|^2}e^{-\phi'+\psi}\\
& = & \int_\Omega |u|^2 \frac{1}{1+\frac{\eta}{(1+(-\rho)^{-1})^2}}e^{-\phi'+\psi}\\
\eea

Finally, we do the third integral, over Supp$(\nu).$

We use 14.3 (8): $|\overline{\partial}\psi|^2\leq \frac{4}{\eta^2}|\overline{\partial}\rho|^2$

and 14.3 (9): $\partial\overline{\partial}\psi \geq \frac{|\overline{\partial}\rho|^2}{\eta}$, when
$|z_n|^2\leq \epsilon^2.$

\bea
r\int_{supp (\nu)}|u\overline{\partial}\psi|^2_Ae^{-\phi'+\psi}
& \leq & r\int_\Omega 
|u|^2 \frac{4}{\eta^2}|\overline\partial \rho|^2 \frac{\eta}{|\overline{\partial}\rho|^2}e^{-\phi'+\psi}\\
& = & r\int_\Omega |u|^2 \frac{4}{\eta}e^{-\phi'+\psi}\\
\eea

We combine the above calculations:
\bea
\int_\Omega |u|^2 e^{-\phi'+\psi}& \leq & (1+\frac{1}{r})\int_{\Omega}|\nu|^2_Ae^{-\phi'+\psi}\\
& + & 
\int_{\Omega}|u\overline{\partial}\psi|^2_Ae^{-\phi'+\psi}\\
& + & r\int_{supp(\nu)}|u\overline{\partial}\psi|^2_Ae^{-\phi'+\psi}\\
& \leq & 2(1+r^{-1})C\int_{\Omega'}|f|^2e^{-\phi}\\
& + & \int_\Omega |u|^2 \frac{1}{1+\frac{\eta}{(1+(-\rho)^{-1})^2}}e^{-\phi'+\psi}\\
& + &  r\int_\Omega |u|^2 \frac{4}{\eta}e^{-\phi'+\psi}\\
\eea

Hence we get 
\bea
\int_\Omega |u|^2 e^{-\phi'+\psi} & - & \int_\Omega |u|^2 \frac{1}{1+\frac{\eta}{(1+(-\rho)^{-1})^2}}e^{-\phi'+\psi}\\ & - & r\int_\Omega |u|^2 \frac{4}{\eta}e^{-\phi'+\psi}\\
& \leq & 2(1+r^{-1})C\int_{\Omega'}|f|^2e^{-\phi}\\
\eea

\bea
\int_\Omega |u|^2 e^{-\phi'+\psi}\left(1-\frac{1}{1+\frac{\eta}{(1+(-\rho)^{-1})^2}}
-\frac{4r}{\eta}\right) & \leq & 2(1+r^{-1})C\int_{\Omega'}|f|^2e^{-\phi}\\
\eea

Hence 
$$
\int_\Omega |u|^2 e^{-\phi'+\psi}\left(\frac{\frac{\eta}{(1+(-\rho)^{-1})^2}}{1+\frac{\eta}{(1+(-\rho)^{-1})^2}}-\frac{4r}{\eta}\right)\leq 2(1+r^{-1})C\int_{\Omega'}|f|^2e^{-\phi}
$$

From Lemma 14.5 we get

$$
\int_\Omega |u|^2 e^{-\phi'+\psi}\left(\frac{\frac{1}{6}-{4r}}{\eta}\right)\leq 2(1+r^{-1})C\int_{\Omega'}|f|^2e^{-\phi}
$$

Choose $0<r_0<\frac{1}{24}$ to minimize $\frac{1+\frac{1}{r}}{\frac{1}{6}-4r}$

and set $C'=\frac{1+\frac{1}{r_0}}{\frac{1}{6}-4r_0}$.

Then 
$$
\int_\Omega \frac{1}{\eta}|u|^2 e^{-\phi'+\psi}\leq 2C'C\int_{\Omega'}|f|^2e^{-\phi}
$$
Since $\psi=-\log \eta$, we get
$$
\int_\Omega \frac{1}{\eta^2}|u|^2 e^{-\phi'}\leq 2C'C\int_{\Omega'}|f|^2e^{-\phi}.
$$
Recall that $\eta=-\rho+\log (-\rho)<-2\rho=-2 \log(|z|^2+\epsilon^2)$, so
$$
\int_\Omega \frac{1}{(\log (|z_n|^2+\epsilon^2))^2}|u|^2 e^{-\phi'}\leq 8C'C\int_{\Omega'}|f|^2e^{-\phi}.
$$

Using that $\phi'=\phi+\log(|z_n|^2+\delta^2)$, we obtain

$$
\int_\Omega \frac{1}{(|z_n|^2+\delta^2)(\log (|z_n|^2+\epsilon^2))^2}|u|^2 e^{-\phi}\leq 8C'C\int_{\Omega'}|f|^2e^{-\phi}.
$$

Hence we have found functions $u=u_{\delta,\epsilon}$ solving the problem
$\overline{\partial}u_{\delta,\epsilon}=\nu=\nu_\epsilon$ om $\Omega$ satisfying the above estimate.
In particular, the family $u_{\delta,\epsilon}$ are uniformly in $L^2_{loc}$ and hence also in $L^1_{loc}$
for $\epsilon$ fixed. Applying Corollary H.2.2.5 for fixed $\epsilon$ to the family $u_{\delta,\epsilon}-f\chi(|z_n|^2/\epsilon^2)$ we can find a subsequence $u_{\delta_j,\epsilon}\rightarrow u_\epsilon$ such that
$\overline{\partial} u_\epsilon=\nu_\epsilon$ and

$$
\int_\Omega \frac{1}{|z_n|^2(\log (|z_n|^2+\epsilon^2))^2}|u_\epsilon|^2 e^{-\phi}\leq 8C'C\int_{\Omega'}|f|^2e^{-\phi}.
$$

Notice that this forces $u_\epsilon(z',0)=0$ since the function $\int_{z\in \mathbb C;|z|<a}\frac{1}{|z|^2}d\lambda=\infty$ for all $a>0.$

Hence, the function $f\chi(|z_n|^2/\epsilon^2)-u_\epsilon$ is a holomorphic function on $\Omega$
which extends $f$. Since the functions are uniformly in $L^2_{loc}$ there is a limit holomorphic function $F$ for a subsequence $\epsilon_j\searrow 0$ which is in $L^2_{loc}$ in $\Omega$ and which extends $f$.
If we fix any compact subset $K\subset \Omega$ which does not intersect the hyperplane $z_n=0$, we have for small enough $\epsilon$ that $f\chi(|z_n|^2/\epsilon^2)=0$ on $K.$
Hence we must have that

$$
\int_K \frac{1}{|z_n|^2(\log (|z_n|^2)^2}|F|^2 e^{-\phi}\leq 8C'C\int_{\Omega'}|f|^2e^{-\phi}.
$$

But then also

$$
\int_\Omega\frac{1}{|z_n|^2(\log (|z_n|^2)^2}|F|^2 e^{-\phi}\leq 8C'C\int_{\Omega'}|f|^2e^{-\phi}.
$$

In conclusion, we have shown:

\begin{lemma}
Let $\Omega$ be a bounded pseudoconvex domain in $\mathbb C^n$. Suppose that $\sup_{z\in \Omega}|z_n|<e^{-1/2}.$ Suppose there is a smooth function $\sigma$ on $\mathbb C^n$ such that $\Omega=\{z=(z_1,\dots,z_n)=(z',z_n); \sigma(z)<0\}.$
Let $\Omega'=\{z'; (z',0)\in \Omega\}.$
Assume that $d_{z'}(\sigma)\neq 0$ when $\sigma(z',0)=0.$ Also let $\phi$ be a smooth strongly plurisubharmonic function in an open set containing $\overline{\Omega}.$ Let $f(z')$ be a holomorphic function
in an open set $V\supset \overline{\Omega}'$. 
Then there exists a holomorphic function $F:\Omega\rightarrow \mathbb C$
such $F(z',0)=f(z')$ for all $z'\in \Omega'$. Also
$$
\int_\Omega \frac{|F(z)|^2}{|z_n|^2(\log |z_n|^2)^2}e^{-\phi}\leq 8C'C\int_{\Omega'}|f|^2 e^{-\phi}.
$$
\end{lemma}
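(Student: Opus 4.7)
The plan is to execute the twisted $L^2$ method of B.-Y. Chen that the preceding computations have set up, and then pass to a limit in two regularizing parameters. Fix small $0 < \delta < \epsilon$ and form the smooth first-guess extension $G(z) = \chi(|z_n|^2/\epsilon^2)\, f(z')$, whose defect $\nu := \overline{\partial} G$ is a $(0,1)$-form proportional to $d\overline{z}_n$ and supported in $\{\epsilon^2/2 \leq |z_n|^2 \leq \epsilon^2\}$. With the modified weight $\phi' = \phi + \rho_\delta$, where $\rho_\delta(z) = \log(|z_n|^2 + \delta^2)$ is plurisubharmonic by Lemma 14.4(2), $\phi'$ remains strictly plurisubharmonic with Hessian dominating that of $\phi$. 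Theorem 14.1 then produces a minimal-norm solution $u \in L^2(\Omega, \phi')$ of $\overline{\partial} u = \nu$, chosen orthogonal to the holomorphic subspace of $L^2(\Omega,\phi')$.

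The second step is the twist by $e^\psi$, with $\psi = \psi_\epsilon = -\log \eta_\epsilon$ from Lemma 14.3--14.4. Boundedness and subharmonicity of $\psi$ guarantee that $u e^\psi$ lies in the $L^2(\Omega, \phi'+\psi)$-domain of $\overline{\partial}$ and remains orthogonal to holomorphic functions there, so applying Theorem 14.1 to $u e^\psi$ yields
$$\int_\Omega |u|^2 e^{-\phi'+\psi} \leq \int_\Omega |\nu + u\,\overline{\partial}\psi|^2_{A_{\phi'+\psi,0}}\, e^{-\phi'+\psi}.$$
The small-constant/large-constant lemma with parameter $r > 0$ splits the right-hand side into three pieces. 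The $\nu$-piece is controlled by Lemma 14.3(7), the scaling computation of Lemma 15.2, and the definition of $C$, yielding the clean bound $2(1+r^{-1})C \int_{\Omega'}|f|^2 e^{-\phi}$; the two $u\,\overline{\partial}\psi$-pieces are controlled via Lemma 14.3(6), (8), (9), and the crucial algebraic lower bound of Lemma 14.5. Choosing $r = r_0 \in (0, 1/24)$ to optimize the resulting constant permits absorption of the bad terms into the left-hand side, leaving
$$\int_\Omega \frac{|u|^2}{\eta_\epsilon}\, e^{-\phi'+\psi} \leq 2 C' C \int_{\Omega'} |f|^2 e^{-\phi},$$
and unwinding $\psi = -\log\eta_\epsilon$, $\phi' = \phi + \log(|z_n|^2+\delta^2)$, together with the pointwise bound $\eta_\epsilon \leq -2\log(|z_n|^2+\epsilon^2)$ from Lemma 14.3, rewrites this as the claimed weighted $L^2$ estimate with $\delta$ still present in the measure.

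Finally I would pass to the limit in two stages. With $\epsilon$ fixed, the family $\{u_{\delta,\epsilon}\}_\delta$ is uniformly bounded in $L^2_{\mathrm{loc}}$, and each $u_{\delta,\epsilon} - G$ is holomorphic on $\Omega$, so Corollary 1.2.6 extracts a subsequential limit $u_\epsilon$ satisfying $\overline{\partial} u_\epsilon = \nu_\epsilon$ and the estimate at $\delta = 0$. The non-integrability of $1/|z|^2$ on any disc then forces $u_\epsilon(z',0) = 0$, so $F_\epsilon := G - u_\epsilon$ is a holomorphic extension of $f$ to $\Omega$. A second Montel extraction as $\epsilon \searrow 0$ gives a holomorphic limit $F$ on $\Omega$ that still extends $f$; on any compact $K \subset \Omega \setminus \{z_n = 0\}$ one has $G \equiv 0$ for all sufficiently small $\epsilon$, so the estimate transfers to $F$ on $K$, and exhausting $\Omega$ by such $K$ gives the inequality on all of $\Omega$. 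The main obstacle is precisely the algebraic balancing in the twist: the growth of $|\overline{\partial}\psi|^2$ near $\{z_n = 0\}$ must be matched against the Hessian of $\psi$ so that, after absorbing the $u\,\overline{\partial}\psi$ contributions, a strictly positive coefficient of order $1/\eta_\epsilon$ survives on the left. This is exactly what the numerical bound of Lemma 14.5 (the uniform $1/6$ lower bound) and the optimization over $r < 1/24$ deliver, and getting the constants in these two lemmas to line up is the delicate heart of the argument.
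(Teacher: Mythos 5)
Your proposal is correct and follows essentially the same route as the paper: the twisted $L^2$ method of Chen with the auxiliary weights $\rho_\delta$ and $\psi_\epsilon$, the minimal-norm solution argument invoked twice, the three-way small/large constant split with the $\nu$-piece estimated via Lemmas 14.3(7), 15.2 and the $u\,\overline{\partial}\psi$-pieces via Lemmas 14.3(6), (8), (9) and 14.5, optimization over $r\in(0,1/24)$, and the two-stage Montel limit ($\delta\to 0$ then $\epsilon\to 0$) with the $1/|z_n|^2$ non-integrability forcing $u_\epsilon(z',0)=0$. The only detail elided is that the extra factor of $4$ (turning $2C'C$ into $8C'C$) arises specifically from $\eta_\epsilon^2 < 4\rho_\epsilon^2$ when passing from $1/\eta_\epsilon^2$ to $1/(\log(|z_n|^2+\epsilon^2))^2$, but this is exactly what the paper does.
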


We now prove the Ohsawa-Takegoshi Theorem. This version of the Ohsawa-Takegoshi Theorem
was first proved by Demailly in ICTP lecture notes, vol 6, 1-148, Trieste, 2000).\\

\begin{theorem}
Let $\Omega$ be a pseudoconvex domain in $\mathbb C^n$. Suppose that $\sup_{z\in \Omega}|z_n|<e^{-1/2}.$ 
Let $\Omega'=\{z'; (z',0)\in \Omega\}.$
 Also let $\phi$ be a plurisubharmonic function on  $\Omega.$ Let $f(z')$ be a holomorphic function
on $\Omega'$. 
Then there exists a holomorphic function $F:\Omega\rightarrow \mathbb C$
such $F(z',0)=f(z')$ for all $z'\in \Omega'$. Also
$$
\int_\Omega \frac{|F(z)|^2}{|z_n|^2(\log |z_n|^2)^2}e^{-\phi}\leq 8C'C\int_{\Omega'}|f|^2 e^{-\phi}.
$$
\end{theorem}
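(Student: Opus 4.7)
The plan is to reduce Theorem 15.2 to Lemma 15.1 by a standard double approximation: exhausting $\Omega$ by nicer subdomains, and smoothing the weight $\phi$. We may assume the right-hand side is finite, since otherwise any extension furnished by Theorem 4.2.$8_a$ trivially satisfies the inequality. First I would exhaust $\Omega$ by an increasing sequence of bounded strongly pseudoconvex subdomains $\Omega_j \Subset \Omega_{j+1} \Subset \Omega$ with smooth strictly plurisubharmonic defining functions $\sigma_j$ (via Theorem 2.6.11). By Sard's theorem applied to the restriction of the exhaustion function to the hyperplane $\{z_n=0\}$, I can choose the levels defining $\Omega_j$ so that $d_{z'}\sigma_j \neq 0$ on $\Omega_j' \cap \{z_n=0\}$, so each $\Omega_j$ meets the smoothness hypothesis of Lemma 15.1.

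Next I would approximate $\phi$: for each $j$, pick a sequence $\phi_{j,\nu}$ of smooth strongly plurisubharmonic functions on a neighborhood of $\overline{\Omega}_j$ decreasing pointwise to $\phi$, obtained by convolving $\phi$ with a radial smoothing kernel on the slightly larger $\Omega_{j+1}$ and adding $\tfrac{1}{\nu}\|z\|^2$ (Theorem 2.6.3 guarantees that the convolutions are plurisubharmonic, decrease to $\phi$, and are $\geq \phi$, which is exactly what is needed for the $L^2$-comparison below). Since $f$ is holomorphic on $\Omega' \supset \overline{\Omega_j'}$, Lemma 15.1 applies on $\Omega_j$ with weight $\phi_{j,\nu}$, yielding a holomorphic extension $F_{j,\nu}\colon\Omega_j \to \mathbb{C}$ of $f$ with
\[
\int_{\Omega_j} \frac{|F_{j,\nu}|^2}{|z_n|^2 (\log |z_n|^2)^2}\, e^{-\phi_{j,\nu}} \;\leq\; 8C'C \int_{\Omega_j'} |f|^2\, e^{-\phi_{j,\nu}} \;\leq\; 8C'C \int_{\Omega'} |f|^2\, e^{-\phi},
\]
where the last inequality uses $\phi_{j,\nu} \geq \phi$. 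Because $|z_n|^2 (\log|z_n|^2)^2$ is bounded on compact subsets of $\Omega_j$ and $e^{-\phi_{j,\nu}}$ is locally bounded below uniformly in $\nu$ (by a local upper bound on the smooth approximants of the upper semicontinuous $\phi$), the $F_{j,\nu}$ are uniformly bounded in $L^2_{\mathrm{loc}}$, hence uniformly bounded in $L^1_{\mathrm{loc}}$. By Corollary 2.2.6 I extract a subsequence converging locally uniformly on $\Omega_j$ to a holomorphic $F_j$ with $F_j(z',0)=f(z')$; Fatou's lemma applied to the displayed inequality, using $\phi_{j,\nu} \searrow \phi$, gives the same estimate with $\phi_{j,\nu}$ replaced by $\phi$ and integration over $\Omega_j$.

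Finally I would use a diagonal argument across $j$. The functions $F_j$ are holomorphic on $\Omega_j$ with a uniform $L^2$-bound weighted by $1/(|z_n|^2(\log|z_n|^2)^2) \cdot e^{-\phi}$ over $\Omega_j$, and hence are locally uniformly $L^2$-bounded over any compact subset of $\Omega$, which again yields local uniform boundedness via Corollary 2.2.6 and a locally uniform limit $F$, holomorphic on $\Omega$, extending $f$. One more application of Fatou produces the required global estimate on $\Omega$. The main obstacle I anticipate is bookkeeping: ensuring (i) the smoothed weights $\phi_{j,\nu}$ and the exhaustion $\Omega_j$ are chosen so that the hypotheses of Lemma 15.1 are simultaneously met (in particular the nondegeneracy of $\sigma_j$ on the slice), and (ii) the two successive Fatou/monotone-convergence passages correctly handle the possibility that $\phi$ takes the value $-\infty$ on a polar set, where $e^{-\phi}$ blows up but the integrability hypothesis tames it. Both issues are standard once the sequences are chosen carefully, so the core content of the theorem is already contained in Lemma 15.1.
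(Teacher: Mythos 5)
Your proof is correct and follows essentially the same route as the paper's: exhaust $\Omega$ by bounded, smoothly bounded, strongly pseudoconvex subdomains obtained from Theorem~2.6.11 plus a Sard-theorem choice of level so that Lemma~15.1 applies; regularize $\phi$ by convolution plus a small strictly plurisubharmonic perturbation; apply Lemma~15.1; and pass to the limit by normal families together with Fatou/monotone convergence. The only cosmetic difference is the order of the double limit (you diagonalize across the exhaustion while the paper first fixes the sublevel set, takes the limit in the smoothing parameter to produce $F_c$, and then lets $c\to\infty$), and your anticipated worries (Sard bookkeeping, polar sets where $\phi=-\infty$) are in fact nonissues, since Fatou requires no pointwise finiteness.
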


\begin{proof}
Let $f(z')$ be a holomorphic function on $\Omega'$.  We proved in Theorem 4.2.$8_a$ that there exists an extension $F$ which is holomorphic on $\Omega.$ We only need to show that in case the integral on the right is finite, we can find $F$ satisfying the inequality. 
By Theorem H.2.6.11, there exists a smooth plurisubharmonic function $\psi$ on $\Omega$ so that all sublevel sets $\psi\leq c$ are compact. By Sards Lemma, the gradient of $\psi$ is nonzero on almost all level sets. The same applies to the restriction $\psi(z',0)$ to $\Omega'$. Hence, if we let
$\Omega_1=\{\psi<c\}, \Omega'_1=\{(z',0)\in \Omega_1\}$ then there is a function $\sigma$ as in Lemma 15.1. We can do this construction for arbitrarily large $c$.
Let $\phi_k$ be a sequence of smooth strongly plurisubharmonic functions defined on neighborhoods of 
$\overline{\Omega}_1$ such that $\phi_k\searrow \phi$ pointwise.

For each $k$ there exists by Lemma 15.1 a holomorphic extension of $f$ to $F_k$ on $\Omega_1$ such that
$$
\int_{\Omega_1} \frac{|F_k(z)|^2}{|z_n|^2(\log |z_n|^2)^2}e^{-\phi_k}\leq 8C'C\int_{\Omega'}|f|^2 e^{-\phi_k}.
$$

Next note that $e^{-\phi_k}\nearrow e^{-\phi}.$ Hence we get for $m\geq k$
that
\bea
\int_{\Omega_1}|F_m|^2e^{-\phi_k} & \leq & \int_{\Omega_1}\frac{|F_m|^2}{|z_n|^2(\log(|z_n|^2)^2}e^{-\phi_k} \\
& \leq & \int_{\Omega_1}\frac{|F_m|^2}{|z_n|^2(\log(|z_n|^2)^2}e^{-\phi_m} \\
& \leq & 8C'C\int_{\Omega'}|f|^2 e^{-\phi_m}\\
& \leq & 8C'C\int_{\Omega'}|f|^2 e^{-\phi}\\
\eea

Hence we see that the family $\{F_m\}$ is locally uniformly bounded in $L^2$, hence there is a subsequence which converges uniformly to a holomorphic function $F_c$ on $\Omega_1.$
This function is an extension of $f$. Moreover,
$$
\int_{\Omega_1}\frac{|F_c|^2}{|z_n|^2(\log(|z_n|^2)^2}e^{-\phi_k} \leq 8C'C\int_{\Omega'}|f|^2 e^{-\phi}
$$
for every $k.$
Hence
$$
\int_{\Omega_1}\frac{|F_c|^2}{|z_n|^2(\log(|z_n|^2)^2}e^{-\phi} \leq 8C'C\int_{\Omega'}|f|^2 e^{-\phi}.
$$

Next we can take a limit $F$ of a subsequence $F_{c_j}, c_j\rightarrow \infty$. Then $F$ extends $f$
and
$$
\int_{\Omega}\frac{|F|^2}{|z_n|^2(\log(|z_n|^2)^2}e^{-\phi} \leq 8C'C\int_{\Omega'}|f|^2 e^{-\phi}.
$$
\end{proof}

\section{Appendix 1, $\overline{\partial}$ on polydiscs}

2.3 The inhomogeneous Cauchy-Riemann equations in a polydisc.\\

We let $f=\sum_{|I|=p,|J|=q}f_{I,J}dz^I\wedge d\overline{z}^J$ be a $(p,q)$ form.
Note that if $p>n$ or $q>n$ the form must be zero because some $dz_i$ or $d\overline{z}_j$ must be repeated and when we switch them the form changes sign.
We will use the notation $f=\sum'_{I,J}f_{I,J}dz^I\wedge d\overline{z}^J$ if all the multiindeces are in increasing order, so if $I=(i_1,\dots, i_p)$ then $i_1<i_2\cdots < i_p$ and similar for $J$.
In this case the coefficients are unique.\\

We say that $\sum'_{I,J}dz^I\wedge d\overline{z}^J$ is a $\mathcal C^\infty$ form if all coefficients
are in $\mathcal C^\infty(\Omega).$

{\bf Theorem 2.3.3} Let $D$ be a polydisc and let $f\in \mathcal C^\infty_{(p,q+1)}(D).$
Assume that $\overline\partial f=0.$ Suppose $D'\subset \subset D$ is a concentric smaller polydisc.
Then there exists $u\in \mathcal C^\infty_{(p,q)}(D')$ such that $\overline\partial u=f.$

{\bf Lemma 2.3.3a}
Let $D'\subset \subset D''\subset \subset D$ be concentric polydiscs with polyradii $r',r'',r$ in $\mathbb C^n.$
Let $1\leq k\leq n.$ Suppose that $g\in \mathcal C^{\infty}(D)$ and suppose that
$\frac{\partial g}{\partial\overline{z}_j}=0$ for all $j>k.$ Then there exists a $\mathcal C^\infty$
function $G$ on $D$ such that $\frac{\partial G}{\partial \overline{z}_k}=g$ on $D''$ and
$\frac{\partial G}{\partial \overline{z}_j}=0$ on $D$ for all $j>k.$

\begin{proof}
Let $\psi(z_k)\in \mathcal C^\infty_0(\Delta^1(0,r_k))$which is $1$ on $\Delta^1(0,r''_k).$
We define
\bea
G(z) & = & \frac{1}{2\pi i}\int \frac{\psi(\tau)g(z_1,\dots,z_{k-1},\tau,z_{k+1},\dots,z_n)}{\tau-z_k}d\tau\wedge d\overline{\tau}\\
& & \mbox{Set}\; \sigma=z_k-\tau\\
& = & \frac{1}{2\pi i}\int \frac{\psi(z_k-\sigma)g(z_1,\dots,z_{k-1},z_k-\sigma,z_{k+1},\dots,z_n)}{-\sigma}d\sigma\wedge d\overline{\sigma}\\
\eea

From the last expression we see that $G\in \mathcal C^\infty(D).$ Moreover we see that
$\frac{\partial G}{\partial \overline{z}_j}=0$ on $D$ for all $j>k.$
Finally,
\bea
\frac{\partial G}{\partial \overline{z}_k} & = & 
\frac{1}{2\pi i}\int \frac{\frac{\partial (\psi(z_k-\sigma)g(z_1,\dots,z_{k-1},z_k-\sigma,z_{k+1},\dots,z_n))}{\partial \overline{z}_k}}{-\sigma}d\sigma\wedge d\overline{\sigma}\\
& = & \frac{1}{2\pi i}\int \frac{\frac{\partial (\psi(\tau)g(z_1,\dots,z_{k-1},\tau,z_{k+1},\dots,z_n))}{\partial \overline{\tau}}}{\tau-z_k}d\tau\wedge d\overline{\tau}\\
& = & \mbox{using Theorem 1.2.1}\\
& & \psi(z_k)g(z).\\
\eea

Since $\psi=1$ on $D'$, we are done.
\end{proof}

Let $q\leq k\leq n.$\\

{\bf Lemma 2.3.$3_k$}
Let $D$ be a polydisc and let $f\in \mathcal C^\infty_{(p,q+1)}(D).$
Assume that $\overline\partial f=0.$ Assume furthermore that $f$ contains no term
with $d\overline{z}_{k+1}, \dots, d\overline{z}_n.$ Suppose $D'\subset \subset D$ is a concentric smaller polydisc.
Then there exists $u\in \mathcal C^\infty_{(p,q)}(D')$ such that $\overline\partial u=f.$\\

Notice that if $k=q$, then all terms in $f$ must be zero because only $d\overline{z}_1,\dots,d\overline{z}_p$ can appear and you need $p+1$ differentials. Hence Lemma 2.3.$3_q $
is true. We next show that if $q\leq k-1<n$, then Lemma 2.3.$3_{k-1}$ implies Lemma 2.3.$3_k.$

Let the assumptions be as in Lemma 2.3.$3_k.$ We can then write
$$
f=d\overline{z}_k\wedge g+h
$$
where $g$ is a $(p,q)$ form with no terms with $d\overline{z}_{k+1}, \dots, d\overline{z}_n$
and $h$ is a $(p,q+1)$ form without any of the  $d\overline{z}_{k+1}, \dots, d\overline{z}_n$.

We can write $g=\sum'_{I,J}g_{I,J}dz^I\wedge d\overline{z}^J$ where $I,J$ do not contain any of the indices $k,\cdots,n.$ The only terms in $\overline\partial f$ containing both $I,J,d\overline{z}_k,d\overline{z}_j$ for some $j>k$, come from 
$\frac{\partial g_{I,J}}{\partial \overline{z}_j}.$ Hence, since $\overline\partial f=0$, it follows
that $\frac{\partial g_{I,J}}{\partial \overline{z}_j}=0$ for all $j>k.$ Hence by Lemma 2.3.3a there exist
$G_{I,J}\in \mathcal C^\infty(D)$ such that $\frac{\partial G_{I,J}}{\partial \overline{z}_k}=g$ on $D''$ and
$\frac{\partial G_{I,J}}{\partial \overline{z}_j}=0$ on $D$ for all $j>k.$
Let $G=\sum'G_{I,J}dz^I\wedge d\overline{z}^J.$ Then $\overline\partial G=d\overline{z}_k\wedge
g+h_1$ on some $D''$, $D'\subset \subset D''\subset \subset D$, where $h_1$ does not contain any of the differentials $d\overline{z}_k,\dots,d\overline{z}_n.$
The form $f-\overline\partial{G}=h-h_1$ is $\overline\partial-$ closed and contains none of
$d\overline{z}_k,\dots,d\overline{z}_n.$ Hence, by the inductive hypothesis,
there is a $\mathcal C^\infty $ form $v$ on $D$ such that $\overline{\partial v}=f-\overline{\partial}= G.$ Hence $v+G$ solves the problem $\overline{\partial} (v+G)=f.$\\

Finally, to prove Theorem 2.3.3, we observe that this is equivalent to Lemma 2.3.$3_n.$\\

{\bf Corollary 2.3.3b}
 Let $D$ be a polydisc and let $f\in \mathcal C^\infty_{(p,q+1)}(D).$
Assume that $\overline\partial f=0.$
Then there exists $u\in \mathcal C^\infty_{(p,q)}(D)$ such that $\overline\partial u=f.$\\

\begin{proof}
Let $D_n$ denote an increasing sequence of concentric polydiscs such that $D_j\subset \subset D_{j+1}$ and $D=\cup D_j.$
Let us first consider the case $q=0.$ 
We will make an inductive construction. Suppose we have a form $v_j=\sum{I,0}v_{I,0,j}dz^I\in \mathcal C^\infty_{(p,q)}(D)$
such that $\overline{\partial} v_j=f$ on $D_j.$ Use Theorem 2.3.3 to find a $v_{j+1}'=\sum{I,0}g_{I,0,j+1}dz^I\in  \mathcal C^\infty_{(p,q)}(D)$ such that $\overline{\partial}v_{j+1}'= f$ on $D_{j+1}.$ 
Then all the coefficients of the $(p,0)$ form  $v_{j+1}'-v_j$ are holomorphic functions on $D_j.$
Hence they have a normally convergent power series in $D_j$ Now we can approximate
each $v_{I,0,j}-v'_{I,0,j+1}$ closer than $\frac{1}{2^j}$ by holomorphic polynomials $P_{I,0,j}$ on $D_{j-1}.$
We now define a new solution to $\overline{\partial}u=f$ on $D_{j+1}$
by setting $v_{j+1}=\sum_{I} (v'_{I,0,j+1)}+P_{I,0,j})dz^I$,
This sequence of solutions will converge normally to a solution on $D.$\\

Next, let us consider the case $q>0.$
Again we make an inductive construction. So assume we have a  solution $v_j$ defined on $D$ and
such that $\overline{\partial}v_{j}=f$ on $D_j.$ Next, let $v_{j+1}'$ be smooth $(p,q)$ form on $D$
such that $\overline\partial v'_{j+1}=f$ on $D_{j+1}.$ Then the form $v'_{j+1}-v_j$ is a smooth $(p,q)$
form on $D$ such that $\overline\partial (v'_{j+1}-v_j)=0$ on $D_j$. Since $q>0$, theorem 2.3.3 applies to find a smooth $(p,q-1)$ form $w_j$ on $D$ so that $\overline{\partial}w_j=v'_{j+1}-v_j$
on $D_{j-1}.$ Define $v_{j+1}$ on $D$ by $v_{j+1}=v'_{j+1}-\overline\partial w_j$. Then still
$\overline\partial v_{j+1}=\overline{\partial} v'_{j+1}=f$ on $D_{j+1}$ and
$v_{j+1}=v_j$ on $D_{j-1}.$ Hence we get a solution on $D$ by letting $v=\lim v_j.$

\end{proof}

\section{Appendix 2, Solution of $T^*f=v$}

This is the analogue of Lemma 4.1.1 in Hormander for $T^*$ instead of $T.$ It was not needed
in our presentation.\\

{\bf Lemma 4.1.2}
Let $G_1,G_2$ be reflexive Banach spaces and let $T:G_1\rightarrow G_2$ be a densely defined
closed linear operator.  Let $F\subset G_2$ be a closed subspace containing the range of $T, R_T.$
Assume that there exists a constant $C>0$ such that
$$
(*) \;\;\;\|y_{|F}\|_{G_2'} \leq C\|T^*y\|_{G'_1}\; \forall \; y\in  D_{T^*}.
$$
Then for every $v\in G_1'$ which is in $N_T^{\perp}$ there is an $f\in D_{T^*}$ such that $T^*(f)=v$
and
$$
\|f\|_{G_2'}\leq C \|v\|_{G_1'}
$$

We prove first two lemmas.

{\bf Lemma 4.1.$2_a$}
Let $G_1,G_2$ be two reflexive Banach spaces, $T:G_1\rightarrow G_2$ a closed, densely defined linear operator. Then $$N_T=\{x\in G_1 \; \mbox{such that} \; T^*(y)(x)=0\; \forall y \in D_{T^*}\}.$$
Also $\overline{R}_{T^*}= N_T^\perp.$\\

\begin{proof}
Suppose that $x\in N_T$ and $y\in D_{T^*}.$ Then $(T^*y)(x)=y(Tx)=y(0)=0.$ Hence
$N_T\subset \{x\in G_1 \; \mbox{such that} \; T^*(y)(x)=0\; \forall y \in D_{T^*}\}$.
Also $\overline{R}_{T^*}= N_T^\perp.$
Suppose that $x\in G_1, T^*y(x)=0\;\forall y\in D_{T^*}.$ Then for the isometric $\phi=\phi(x)\in G_1''$
we have $\phi(x)(y)(T^*y)=0$ for all $y\in D_{T^*}.$ Hence $(\phi,0)\in G_{T^*}^\perp.$ This implies that $(\phi,0)\in G_{T^{**}}$ so $(x,0)\in G_T.$ Therefore $Tx=0$ so $x\in N_{T}.$ This proves the first part of the lemma.

Suppose next that $x\in N_T$ and  $z\in R_{T^*}$, $z=T^*(y).$ Then $z(x)=y(Tx)=y(0)=0.$
Hence $z\in N_T^\perp.$ Hence $R_{T^*}\subset N_T^\perp.$ Since $N_T^\perp$ is closed, we see that
$\overline{R}_{T^*}\subset  N_T^\perp.$ Suppose finally that there exists some $z\in N_T^{perp}$ which is not in $\overline{R}_{T^*}$. Then, by the Hahn-Banach theorem there exists a $\phi\in G_1''$
so that $\phi(T^*(y))=0$ for all $y\in D_{T^*}$ but $\phi(z)\neq 0.$ Hence $(\phi,0)\in G_{T^*}^\perp.$
Hence there exists $x\in G_1$ so that $(x,0)\in G_T$. Hence $Tx=0$ so $x\in N_T$ and
$z(x)=\phi(z)\neq 0$. This is a contradiction since $z\in N_T^\perp.$
\end{proof}

{\bf Lemma 4.1.$2_b$}
Assume the hypothesis of Lemma 4.1.2. Then $T^*$ has closed range.\\

\begin{proof}
Pick a $y\in D_{T^*}.$ Then $|y(x)|\leq C \|T^*(y)\|\|x\|\; \forall \; x\in F.$
By the Hahn-Banach theorem we can extend $y_{|F}$ to $\tilde{y}$ on $G_2$ with the same norm.
Hence $\|\tilde{y}\|_{G_2'}\leq C\|T^*(y)\|_{G_1'}.$ Also, $\tilde{y}-y$ vanishes on $F.$
Therefore $\tilde{y}-y \in D_{T^*}$ and $T^*(\tilde{y}-y)=0.$
Since $y\in D_{T^*}$ it follows that $\tilde{y}\in D_{T^*}$ and $T^*(\tilde{y})=T^*(y).$
Next, let $\{y_n\}$ be a sequence in $D_{T^*}$ such that $\{T^*(y_n)$ converges to some $z\in G_1'.$
We can assume that $\|T^*(y_{n+1}-y_n)\|<\frac{1}{2^n}.$ Set $u_1=y_1$ and
$u_{n+1}=y_{n+1}-y_n$ for $n\geq 1.$ So $y_{n}=u_1+\cdots + u_{n}$ for $n\geq 2.$
Replace $u_n$ by $\tilde{u}_n$ as above, and define $\tilde{y}_n=\tilde{u}_1+\cdots +\tilde{u}_n.$
Then it follows that the sequence $T^*(\tilde{y}_n)\rightarrow z$ and  hence
also $\tilde{y}_n$ is a Cauchy sequence converging to some $y\in G_2'$. Since the graph of $T^*$ is closed, we see that $y\in D_{T^*}$ and $z=T^*(y).$ Hence the range of $T^*$ is closed.
\end{proof}

Next we prove Lemma 4.1.2.:

\begin{proof}
Suppose that $v\in G_1'$ and that $v\in N_T^{\perp}.$
Then by Lemma 1, $v\in \overline{R}_{T^*}$ and by Lemma 2, we have that $v=T^*(f)$ for some
$f\in D_{T^*}.$ By our hypothesis, it follows that $\|f_{|F}\{_{G_2'}\leq C \|v\|_{G_1'}.$ Again we use Hahn-Banach and replace $f$ by $\tilde{f}$ on $G_2$ with the same norm on $G_2$ as the norm of $f$ restricted to $F.$ Again as above, $\tilde{f}\in D_{T^*}$ and $T^*(\tilde{f})=T^*(f)=v.$ Also,
$\|\tilde{f}\|_{G_2'}\leq C \|v\|_{G_1'}.$
\end{proof}

\section{Appendix 3, Ohsawa-Takegoshi in $L^p$}

Ohsawa-Takegoshi in $L^p$ spaces.\\

We present an elegant proof for the validity of Ohsawa-Takegoshi for $L^p$ spaces, $0<p<2.$
This proof is due to Berndtsson-Paun: Bergman kernels and subadjunction. arXiv: 1002.4145v1 
[math.AG] 22Feb 2010

\begin{theorem}
Let $\Omega$ be a pseudoconvex domain in $\mathbb C^n$. Suppose that $\sup_{z\in \Omega}|z_n|<e^{-1/2}.$ 
Let $\Omega'=\{z'; (z',0)\in \Omega\}.$
 Also let $\phi$ be a plurisubharmonic function on  $\Omega.$ Let $f(z')$ be a holomorphic function
on $\Omega$. 
Then there exists a holomorphic function $F:\Omega\rightarrow \mathbb C$
such $F(z',0)=f(z')$ for all $z'\in \Omega'$. Also
$$
\int_\Omega \frac{|F(z)|^2}{|z_n|^2(\log |z_n|^2)^2}e^{-\phi}\leq C_0\int_{\Omega'}|f|^2 e^{-\phi}.
$$
\end{theorem}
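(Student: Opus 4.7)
The plan is to give an alternative, more streamlined proof of this Ohsawa-Takegoshi estimate along the lines of Berndtsson-Paun, which bypasses the delicate pointwise identities of Lemma 14.4 in Chen's argument and isolates the analytic content in a single twisted $L^2$ estimate. As in the proof of Theorem 15.2, the first step is the standard reduction: exhaust $\Omega$ by bounded, smoothly bounded, strictly pseudoconvex subdomains on which $\{z_n=0\}$ is a transverse smooth slice, and approximate $\phi$ from above by smooth strictly plurisubharmonic weights. It therefore suffices to prove the estimate for such $\Omega,\phi$ and for $f$ holomorphic in a neighborhood of $\overline{\Omega}'$, with a constant independent of the regularization, and then pass to a locally uniform limit exactly as at the end of Theorem 15.2.

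Following Chen's setup, I would take the trial extension $F_0(z',z_n):=\chi(|z_n|^2/\epsilon^2)f(z')$ with $\chi$ as in Section 15, set $v:=\overline{\partial} F_0$, a smooth $\overline{\partial}$-closed $(0,1)$-form supported in the shell $\epsilon^2/2\le |z_n|^2\le \epsilon^2$, and seek $u$ with $\overline{\partial}u=v$ and $u$ vanishing on $\{z_n=0\}$. Then $F:=F_0-u$ is the desired holomorphic extension, and the vanishing on the slice is automatic once $u$ satisfies a weighted estimate with non-integrable singularity along $\{z_n=0\}$ (as in the argument following Lemma 15.1).

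The core tool is Berndtsson's twisted H\"ormander inequality: for smooth positive functions $\eta,\lambda$ on $\Omega$, a smooth plurisubharmonic $\Phi$, and a $\overline{\partial}$-closed $(0,1)$-form $v$, provided the Hermitian curvature inequality
\begin{equation*}
B\ :=\ \eta\, i\partial\overline{\partial}\Phi\ -\ i\partial\overline{\partial}\eta\ -\ \lambda^{-1}\, i\partial\eta\wedge \overline{\partial}\eta\ \ge\ 0
\end{equation*}
holds on $\Omega$, there exists a solution of $\overline{\partial}u=v$ with
\begin{equation*}
\int_\Omega |u|^2\, e^{-\Phi}\ \le\ \int_\Omega (\eta+\lambda)\, |v|^2_{B^{-1}}\, e^{-\Phi}.
\end{equation*}
This is proved by the same duality-plus-Bochner-Kodaira mechanism as Theorem 4.2.$1_n$, applied not to $u$ but to $w:=\sqrt{\eta}\,u$: the cross terms produced by differentiating $\sqrt{\eta}$ are absorbed using the small-constant/large-constant device of Lemma 4.2.$1_i$, yielding precisely the twisted basic estimate, from which existence follows by the Hahn-Banach argument of Theorem 4.1.1'.

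Applying this with $\Phi=\phi$ and with $\eta:=A-\log(|z_n|^2+\epsilon^2)$ for $A$ large (so that $\eta>0$ and $-i\partial\overline{\partial}\eta\ge 0$, since $\log(|z_n|^2+\epsilon^2)$ is plurisubharmonic) together with a suitably chosen $\lambda$ comparable to $\eta$, the curvature hypothesis reduces to $i\partial\overline{\partial}\phi\ge 0$, so no plurisubharmonicity of $\phi$ is actually consumed. Because $v$ is supported where $|z_n|^2\sim\epsilon^2$, direct computation bounds the right-hand side by an absolute constant times $\int_{\Omega'}|f|^2e^{-\phi}$; meanwhile the factor $\eta\sim |\log|z_n|^2|$ buried in the solution, combined with the $1/|z_n|^2$ already forced by the twist, yields an estimate of the form $\int|u|^2/(|z_n|^2(\log|z_n|^2)^2)\,e^{-\phi}\le C_0\int_{\Omega'}|f|^2e^{-\phi}$. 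Letting $\epsilon\to 0$ and then undoing the exhaustion/regularization as at the end of Theorem 15.2 completes the proof. The main obstacle, and where Berndtsson-Paun's cleverness lies, is the precise calibration of $\eta$ and $\lambda$: one must arrange the curvature inequality without consuming any positivity of $\phi$, while simultaneously ensuring that the prefactor $\eta+\lambda$ on the right and the implicit $\eta$-weight hidden on the left combine to produce exactly the singular weight $1/(|z_n|^2(\log|z_n|^2)^2)$ with an absolute constant $C_0$.
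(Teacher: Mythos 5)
The route you propose --- replacing Chen's use of H\"ormander's Acta Theorem 1.1.4 (Theorem 14.1 here) by a twisted H\"ormander inequality in the Donnelly--Fefferman / Berndtsson--Charpentier style --- is a legitimate and well-known alternative to the paper's argument. As written, though, your sketch has gaps that Chen's specific choices in Lemma 14.4 are engineered to avoid, and they are not cosmetic. First, the twisted inequality is misstated: running the duality argument of Theorem 4.1.1' against the twisted basic estimate $\|\sqrt{\eta+\lambda}\,T^*\alpha\|^2+\|\sqrt{\eta}\,S\alpha\|^2\ge\int\langle B\alpha,\alpha\rangle e^{-\Phi}$ produces a solution $u$ of $\overline{\partial}u=v$ with
$$\int_\Omega\frac{|u|^2}{\eta+\lambda}\,e^{-\Phi}\ \le\ \int_\Omega|v|^2_{B^{-1}}e^{-\Phi},$$
so $\eta+\lambda$ sits as a denominator on the left, not as a multiplier inside the right integral. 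Since $\eta+\lambda$ is far from constant (you want it of order $-\log|z_n|^2$), the two forms are not interchangeable, and the misstatement is exactly what leads you to misattribute where the singular weight comes from. Second, with $\Phi=\phi$ you have dropped the mechanism that produces the factor $1/|z_n|^2$: in the paper that factor comes from using the weight $\phi'=\phi+\log(|z_n|^2+\delta^2)$, so $e^{-\phi'}\to e^{-\phi}/|z_n|^2$ as $\delta\to0$ --- it has nothing to do with the twist. With $\Phi=\phi$ alone, the left side is $\int|u|^2/(\eta+\lambda)\,e^{-\phi}$, which is integrable across $\{z_n=0\}$ and so does not force $u(z',0)=0$; the step showing $F=F_0-u$ extends $f$ then fails.

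Third, the calibration of $\eta,\lambda$ does not give $B\ge0$. With $\eta=A-\log(|z_n|^2+\epsilon^2)$ the $dz_n\wedge d\overline{z}_n$ component of $-i\partial\overline{\partial}\eta-\lambda^{-1}i\partial\eta\wedge\overline{\partial}\eta$ is $\bigl(\epsilon^2-\lambda^{-1}|z_n|^2\bigr)/(|z_n|^2+\epsilon^2)^2$, so $B\ge0$ (with $i\partial\overline{\partial}\phi\ge0$ supplying nothing extra in the $z_n$-direction) forces $\lambda\ge|z_n|^2/\epsilon^2$, which is unbounded off the shell where $v$ lives; taking $\lambda$ merely comparable to $\eta\sim|\log|z_n|^2|$ violates the curvature hypothesis. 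This is precisely the point of Chen's choice $\eta_s=-\rho_s+\log(-\rho_s)$ in Lemma 14.4: the extra $\log(-\rho_s)$ supplies the positivity needed to dominate $|\partial\eta|^2$ uniformly, whereas a plain $A-\rho_\epsilon$ does not. To repair the proposal you would need to (i) reinsert $\log(|z_n|^2+\delta^2)$ into $\Phi$, (ii) replace $\eta=A-\rho_\epsilon$ by a convex-in-$(-\rho_\epsilon)$ profile such as $\eta=-\rho_\epsilon+\log(-\rho_\epsilon)$ with $\lambda$ matched accordingly, and (iii) check that $(\eta+\lambda)^{-1}e^{-\Phi}$ reproduces $e^{-\phi}/(|z_n|^2(\log|z_n|^2)^2)$ as $\delta,\epsilon\to0$; at that point you will have essentially re-derived the inequalities of Lemma 14.4 in twisted language, a cleaner packaging but not a shortcut around them.
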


\begin{theorem}
Let $\Omega$ be a pseudoconvex domain in $\mathbb C^n$. Suppose that $\sup_{z\in \Omega}|z_n|<e^{-1/2}.$ 
Let $\Omega'=\{z'; (z',0)\in \Omega\}.$
 Also let $\phi$ be a plurisubharmonic function on  $\Omega.$ Let $f(z')$ be a holomorphic function
on $\Omega$. Let $0<p\leq 2.$
Then there exists a holomorphic function $F:\Omega\rightarrow \mathbb C$
such $F(z',0)=f(z')$ for all $z'\in \Omega'$. Also
$$
\int_\Omega \frac{|F(z)|^p}{|z_n|^2(\log |z_n|^2)^2}e^{-\phi}\leq C_0\int_{\Omega'}|f|^p e^{-\phi}.
$$
\end{theorem}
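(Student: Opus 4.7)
The plan is to use the Berndtsson--Paun trick of reducing the $L^p$ estimate to an iterated application of the $L^2$ Ohsawa--Takegoshi theorem (the previous theorem in this section, with constant $C_0$) via a self-referential plurisubharmonic weight of the form $\phi + (2-p)\log|F_k|^2$. We may assume $f \not\equiv 0$ (else $F \equiv 0$ works) and $M := \int_{\Omega'}|f|^p e^{-\phi} < \infty$. A preliminary replacement of $\phi$ by the cutoff $\phi_\nu := \max(\phi,-\nu)$, with $\nu\to\infty$ at the end, keeps every integral finite and is removed by Fatou using $e^{-\phi_\nu}\nearrow e^{-\phi}$; I suppress the cutoff in the description that follows.

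First I would build inductively a sequence $\{F_k\}_{k\geq 0}$ of holomorphic extensions of $f$ to $\Omega$. Choose $F_0$ to be any holomorphic extension (e.g. from the previous theorem, or from Theorem 4.2.$8_a$) for which
$$
I_0 \;:=\; \int_\Omega \frac{|F_0|^p\, e^{-\phi}}{|z_n|^2(\log|z_n|^2)^2}\,d\lambda
$$
is finite; with the cutoff $\phi_\nu$ this holds by H\"older on the bounded set $\Omega$ combined with the $L^2$ bound of the previous theorem. Given $F_k$ with $F_k \not\equiv 0$, apply the previous theorem with boundary data $f$ and the plurisubharmonic weight $\Phi_k := \phi + (2-p)\log|F_k|^2$ (psh because $2-p\geq 0$ and $\log|F_k|$ is psh). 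Since $F_k(z',0)=f(z')$, the right-hand-side integrand on $\Omega'$ collapses to $|f|^2\, |f|^{-(2-p)} e^{-\phi} = |f|^p e^{-\phi}$ and integrates to $M$; hence there exists an extension $F_{k+1}$ of $f$ satisfying
$$
\int_\Omega \frac{|F_{k+1}|^2\, e^{-\phi}}{|z_n|^2(\log|z_n|^2)^2\, |F_k|^{2-p}}\,d\lambda \;\leq\; C_0 M.
$$

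Next I would apply H\"older's inequality with conjugate exponents $2/p$ and $2/(2-p)$ to the measure $d\mu := e^{-\phi}/(|z_n|^2(\log|z_n|^2)^2)\,d\lambda$ using the decomposition $|F_{k+1}|^p = (|F_{k+1}|^2/|F_k|^{2-p})^{p/2}\cdot |F_k|^{p(2-p)/2}$ to obtain the recursion
$$
I_{k+1} \;\leq\; (C_0 M)^{p/2}\, I_k^{(2-p)/2}.
$$
Setting $r := (2-p)/2 \in [0,1)$ and $J_k := I_k/(C_0 M)$, this becomes $J_{k+1}\leq J_k^{r}$, so $J_k \leq J_0^{r^k}\to 1$ and hence $\limsup_k I_k \leq C_0 M$. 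Since the $I_k$ are uniformly bounded, standard interior estimates together with the Cauchy integral formula in the $z_n$-variable (as in Theorem 2.2.3) propagate the resulting local $L^p$ bounds across $\{z_n=0\}$ to give local uniform bounds on $\{F_k\}$; I would then extract a locally uniformly convergent subsequence, whose limit $F$ is a holomorphic extension of $f$ for which Fatou produces the inequality with $C_0 M$ on the right. Finally, letting $\nu\to\infty$ and extracting one more subsequence together with Fatou on the left and monotone convergence on the right recovers the estimate for the original $\phi$.

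The main obstacle I anticipate is this last compactness step across the hyperplane $\{z_n=0\}$: the weight $1/(|z_n|^2(\log|z_n|^2)^2)$ is non-integrable there, so the uniform $L^p$-control on the $F_k$ degenerates near $\{z_n=0\}$ and one cannot directly deduce local uniform boundedness on that hyperplane. The fix is to use the fact that $F_k(\cdot,0)=f$ is fixed independently of $k$ and apply Cauchy's formula in $z_n$ on small discs to bound $F_k$ pointwise by its integral over annuli $\rho/2<|z_n|<\rho$ where the singular weight is bounded; this restores the missing uniform bound and lets the extraction argument close.
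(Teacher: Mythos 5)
Your proof is correct and follows the same Berndtsson--Paun iteration that the paper uses, including the same normalization, the same H\"older split with exponents $2/p$ and $2/(2-p)$, and the same recursion $I_{k+1}\leq (C_0 M)^{p/2} I_k^{(2-p)/2}$.

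Two small points are worth flagging. First, the weight you write, $\Phi_k=\phi+(2-p)\log|F_k|^2$, is off by a factor of $2$ in the coefficient: this gives $e^{-\Phi_k}|_{z_n=0}=|f|^{-2(2-p)}e^{-\phi}$, not the $|f|^{-(2-p)}e^{-\phi}$ that your subsequent computation (and the paper) uses. It should be $\Phi_k=\phi+\bigl(1-\tfrac{p}{2}\bigr)\log|F_k|^2$, equivalently $\phi+(2-p)\log|F_k|$. Second, the ``main obstacle'' you anticipate near $\{z_n=0\}$ is not actually there. On any compact $K\Subset\Omega$ the factor $1/\bigl(|z_n|^2(\log|z_n|^2)^2\bigr)$ is bounded below by a positive constant (in fact it is $\geq e$ throughout $\Omega$ because $|z_n|<e^{-1/2}$, and it tends to $+\infty$ as $z_n\to 0$), so the uniform bound on $I_k$ directly yields uniform local $L^p$ bounds on the $F_k$. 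Since $|F_k|^p$ is plurisubharmonic for holomorphic $F_k$ and any $p>0$, the sub-mean-value inequality then gives uniform local sup bounds, and Montel applies without any special argument across $\{z_n=0\}$; the singular weight helps rather than hurts there. (It is the weight $1/|z_n|^2$, without the logarithmic correction, that is non-integrable, and the paper uses that fact for a different purpose earlier.) Apart from these two points, your passage to the limit via the cutoff $\phi_\nu$ and Fatou is a sensible way to make precise the end of the argument, which the paper leaves terse.
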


The constant $C_0$ is the same as for $L^2.$
We introduce the notation $\sigma=\frac{e^{-\phi}}{|z_n|^2(\log|z_n|^2)^2}.$

\begin{proof}
We can exhaust $\Omega$ by smoothly bounded strongly pseudoconvex domains. It suffices to prove the estimate for those if the function $f$ extends to a neighborhood of the closure and there exists some extension defined in a neighborhood of the closure. It also suffices to assume that $\phi$ is smooth on $\Omega.$

We can assume that $\int_{\Omega'}|f|^pe^{-\phi}=1$. Pick some holomorphic extension $F_1$
and let $\int_\Omega |F_1|^p \sigma=:A<\infty.$\\ 
We then apply the Ohsawa-Takegoshi theorem to $\phi_1=\phi+(1-\frac{p}{2})\log |F_1|^2.$
We then get an extension $F_2$
such that
\bea
\int_\Omega \frac{|F_2(z)|^2}{|z_n|^2(\log |z_n|^2)^2}e^{-\phi_1} & \leq & C_0\int_{\Omega'}|f|^2 e^{-\phi_1}\\
\int_\Omega \frac{|F_2(z)|^2\sigma}{e^{(1-\frac{p}{2})\log |F_1|^2}} & \leq  & C_0 \int_{\Omega'}|f|^2\frac{1}{e^{(1-\frac{p}{2})\log |F_1|^2}}e^{-\phi}\\
\int_\Omega \frac{|F_2(z)|^2\sigma}{ |F_1|^{2-p}}& \leq  & C_0 \int_{\Omega'}\frac{|f|^2}{|F_1|^{2-p}}e^{-\phi}\\
& = & C_0 \int_{\Omega'}\frac{|f|^2}{|f|^{2-p}}e^{-\phi}\\
& = & C_0 \int_{\Omega'}|f|^pe^{-\phi}\\
& = & C_0\\
\eea

\bea
\int_\Omega |F_2|^p\sigma & = & \int_\Omega |F_2|^p \sigma^{p/2}\sigma^{1-\frac{p}{2}}\\
& = & \int_\Omega \left(\frac{|F_2|^p \sigma^{p/2}}{|F_1|^{(1-\frac{p}{2})p}}\right)
\left(|F_1|^{(1-\frac{p}{2})p}\sigma^{1-\frac{p}{2}}\right)\\
\eea
\bea
& \leq & (\int_\Omega \left(\frac{|F_2|^p \sigma^{p/2}}{|F_1|^{(1-\frac{p}{2})p}}\right)^{\frac{1}{p/2}})^{p/2}\\
\eea
\bea
& * & (\int_\Omega\left(|F_1|^{(1-\frac{p}{2})p}\sigma^{1-\frac{p}{2}}\right)^{\frac{1}{1-\frac{p}{2}}})^{1-\frac{p}{2}}\\
\eea
\bea
& = & (\int_\Omega \left(\frac{|F_2|^2 \sigma}{F_1|^{2-p}}\right))^{p/2}\\
\eea
\bea
& * & (\int_\Omega |F_1|^{p}\sigma))^{1-\frac{p}{2}}\\
&\leq & ( C_0) ^{p/2} A^{1-\frac{p}{2}}\\
& = & A (C_0/A)^{p/2}\\
\eea

We can repeat the construction and find a sequence of holomorphic functions $F_n$ on $\Omega$
which extend $f$ and which satisfy
$$
\int_\Omega |F_n|^p\sigma \leq A_n,\; \mbox{ where}\;
A_n=A_{n-1}(C_0/A_{n-1})^{p/2}.
$$
We see that if $A_n>C$, then $A_{n+1}<A_n.$ If all $A_n>C$, then $A_n\rightarrow C.$

\end{proof}

Break Down of Ohsawa-Takegoshi for $p>2.$\\

We show here that for extensions from $L^q$ to $L^p$,$0<p,q\leq \infty,$ the only
case  possible is $0<p=q\leq 2.$ \\

First we consider unweighted extension on pseudoconvex domains contained in the unit ball.

\begin{example}
For small $\delta>0,$ let $\Omega=\{|z|<\delta,|w|<1\}.$ We extend the function $f=1$ on the $z$ axis. The $L^q$ norm is about $\delta^{1/q}.$ The extension $F=1$ which is optimal, has $L^p$ norm
about $\delta^{1/p}.$ For O-T to hold we need $p\leq q.$\\
\end{example}

We give another example of an unweighted extension of a domain in the unit ball which will show that $p\leq 2.$

{\bf Extensions of $z^n.$}\\

Pick $a_j$, $n+1$ distinct complex numbers. Consider the subdomain of the unit ball in $\mathbb C^2$
given by $|\Pi(z-a_j w)|<\delta.$
We investigate extension of $z^n$ from the intersection with the $z$ axis from $L^q$ to $L^p.$
The $L^q$ norm of $z^n$ is about 
\bea
(\int_{|z|<\delta^{1/(n+1)}}|z|^{nq})^{1/q} & = & (\int_0^{\delta^{1/(n+1)}}r^{nq+1})^{1/q}\\
& = & (\frac{(\delta^{1/(n+1)})^{nq+2}}{nq+2})^{1/q}\\
& = & c_n \delta^{\frac{nq+2}{(n+1)q}}\\
\eea

The extension to at least one of the lines must be at least on the order of $z^n$.
This gives the $L^p$ estimate at least $\delta^{2/p}.$

\begin{lemma}
For O-T to hold from $L^q$ to $L^p$ we need
$$
\frac{2}{p} \geq \frac{nq+2}{(n+1)q}
$$
 \end{lemma}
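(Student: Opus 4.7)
The plan is to turn the example just exhibited into an obstruction to Ohsawa--Takegoshi. Set $\Omega = \{|\prod_{j=0}^n(z - a_j w)| < \delta\}$ inside the unit ball of $\mathbb{C}^2$, $\Omega' = \{z : (z, 0) \in \Omega\}$, and $f(z) = z^n$; the excerpt records $\|f\|_{L^q(\Omega')} \sim \delta^{(nq+2)/((n+1)q)}$. It therefore suffices to produce a matching universal lower bound $\|F\|_{L^p(\Omega)} \geq c\, \delta^{2/p}$, with $c = c(n, a_0, \ldots, a_n) > 0$ independent of $\delta$ and of the extension $F$: a hypothetical $L^q\to L^p$ Ohsawa--Takegoshi estimate $\|F\|_{L^p} \leq C_0 \|f\|_{L^q}$ will then force $\delta^{2/p} \leq \text{const}\cdot \delta^{(nq+2)/((n+1)q)}$ as $\delta \to 0$, hence the claimed exponent inequality.

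To produce the lower bound I would work on the $n+1$ complex lines $\ell_k = \{z = a_k w\}$. Each $\ell_k$ sits inside $\Omega$ because the factor $z - a_k w$ vanishes on it; a short computation shows that the tube $T_k(w) := \{|z - a_k w| < r_k(w)\}$ with $r_k(w) = \delta/(|w|^n |C_k|)$ and $C_k = \prod_{j \neq k}(a_k - a_j)$ is contained in $\Omega$ whenever these tubes are mutually disjoint, which holds as soon as $|w|$ exceeds a fixed multiple of $\delta^{1/(n+1)}$. To bound $|F|$ below on $\ell_k$, I invoke Weierstrass division near the origin: $F(z, w) = Q(z, w)\prod_j(z - a_j w) + R(z, w)$ with $R(z, w) = \sum_{i=0}^n p_i(w) z^i$ of degree at most $n$ in $z$, the holomorphic coefficients $p_i$ being pinned down by $F(z, 0) = z^n$ to satisfy $p_n(0) = 1$ and $p_i(0) = 0$ for $i < n$. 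On $\ell_k$ the product factor vanishes, so $h_k(w) := F(a_k w, w) = R(a_k w, w) = \sum_{i=0}^n p_i(w) a_k^i w^i$; inverting the Vandermonde matrix $(a_k^i)$ yields $w^n p_n(w) = \sum_{k=0}^n b_k h_k(w)$ for constants $b_k$ depending only on the $a_j$, so $\max_k |h_k(w)| \geq c_0 |w|^n$ on any disc in $w$ on which $|p_n(w)| \geq 1/2$.

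Given this pointwise lower bound at the winning index $k = k(w)$, I exploit the subharmonicity of $|F(\cdot, w)|^p$ in $z$ on $T_{k(w)}(w)$ to get
$$|h_{k(w)}(w)|^p \leq \frac{1}{\pi r_k(w)^2} \int_{T_{k(w)}(w)} |F(z, w)|^p\, dA(z),$$
whence the inner integral is at least a constant multiple of $\delta^2 |w|^{np - 2n}$. Integrating against $|w|\, d|w|\, d\theta$ over the annulus $\delta^{1/(n+1)} \leq |w| \leq \rho_0$, the remaining radial integral $\int |w|^{np - 2n + 1}\, d|w|$ is bounded below by a positive constant in the regime $np - 2n + 2 > 0$, which is automatic for $n$ sufficiently large (the only regime needed to extract the main consequence $p \leq 2$). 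This yields $\|F\|^p_{L^p(\Omega)} \geq c_1 \delta^2$, as desired. The main obstacle is to make $|p_n(w)| \geq 1/2$ hold on a $w$-disc whose radius is uniform over the competing extensions $F$; I would close this gap by Cauchy-estimating the Taylor derivatives of $p_n$ at $0$ against $\|F\|_{L^p(\Omega)}$ on a small polydisc near the origin contained in $\Omega$, so that an extension threatening to violate the desired lower bound on $\|F\|_{L^p}$ must have $p_n$ varying slowly near $0$, closing the circle.
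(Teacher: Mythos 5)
The paper only sketches this lemma: after computing $\|f\|_{L^q(\Omega')}\sim\delta^{(nq+2)/((n+1)q)}$, it simply asserts that any holomorphic extension $F$ of $z^n$ to $\Omega$ must have $L^p(\Omega)$ norm at least of order $\delta^{2/p}$. Your proposal is an attempt to actually prove that asserted lower bound, and the skeleton is the right one: work on the lines $\ell_k=\{z=a_kw\}\subset\Omega$; use Weierstrass division and Vandermonde inversion to express $w^np_n(w)$ as a fixed linear combination of the traces $h_k(w)=F(a_kw,w)$; apply the sub-mean-value inequality for $|F(\cdot,w)|^p$ on the disjoint tubes $T_k(w)$ of radius $\sim\delta/|w|^n$; and integrate over the annulus $|w|>C\delta^{1/(n+1)}$. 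One phrasing slip worth noting: $np-2n+2>0$ is \emph{not} automatic for $n$ large when $p<2$ (it then fails for all large $n$); it is automatic only when $p\geq 2$, which is the relevant regime for the contradiction against $p>2$, so the conclusion $p\leq 2$ survives.

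The gap is exactly the one you flag at the end, and your proposed patch does not close it. The pointwise bound $\max_k|h_k(w)|\geq c_0|w|^n$ needs $|p_n(w)|\geq 1/2$, and the $w$-disc on which that holds depends on $p_n'$, hence on $F$. Cauchy-estimating near the origin cannot make this uniform: the largest polydisc around the origin contained in $\Omega$ has polyradius comparable to $\delta^{1/(n+1)}$, so any bound on the Taylor derivatives of $F$ (or $p_n$) extracted from $\|F\|_{L^p(\Omega)}$ on such a polydisc carries positive powers of $\delta^{-1/(n+1)}$ and degenerates as $\delta\to 0$. You obtain neither a $\delta$-independent disc where $|p_n|\geq 1/2$, nor a contradiction.

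The gap closes cleanly once you drop the pointwise bound on $p_n$ in favor of a mean-value bound. Each line $\ell_k$ lies in $\Omega$ for all $|w|<\rho_0$ with $\rho_0$ independent of $\delta$ (the defining product vanishes identically on $\ell_k$), so $p_n$, as a fixed linear combination of the $h_k(w)/w^n$, is holomorphic on $\{|w|<\rho_0\}$ with $p_n(0)=1$. Since $|p_n|^p$ is subharmonic there,
$$
\frac{1}{2\pi}\int_0^{2\pi}|p_n(\rho e^{i\theta})|^p\,d\theta\ \geq\ |p_n(0)|^p\ =\ 1\qquad\text{for every }\rho<\rho_0.
$$
Combining the Vandermonde inequality $\sum_k|h_k(w)|^p\geq c\,|p_n(w)|^p|w|^{np}$ with your tube estimate and integrating in polar coordinates over $\tau<|w|<\rho_0$ (with $\tau$ a fixed multiple of $\delta^{1/(n+1)}$) gives
$$
\|F\|^p_{L^p(\Omega)}\ \geq\ c'\delta^2\int_\tau^{\rho_0}\rho^{np-2n+1}\Bigl(\frac{1}{2\pi}\int_0^{2\pi}|p_n(\rho e^{i\theta})|^p\,d\theta\Bigr)d\rho\ \geq\ c'\delta^2\int_\tau^{\rho_0}\rho^{np-2n+1}\,d\rho\ \geq\ c''\delta^2
$$
when $np-2n+2>0$, with $c''$ independent of $F$ and $\delta$. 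This is the $\delta^{2/p}$ lower bound you were after, and the rest of your argument then goes through.
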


For this to hold for arbitrarily large $n,$ we must have $\frac{2}{p}\geq 1,$ i.e. $p\leq 2.$

These two examples show that for unweighted O-T to hold on pseudoconvex domains in the unit ball,
then we need $0<p\leq 2, p\leq q\leq \infty.$ We observe that in this case $L^q\subset L^p$, so
the theorem holds already as proved in the previous section. There is nothing new.

Finally, let us assume we allow as is usual, the domains to be arbitarily large in the $z$ direction:\\
Take a domain which is a ball of radius $R$ in the $z_1,...,z_{n-1}$ direction and
a disc of fixed small radius in the $z_n$ direction. Extend the function $1.$
Then the $L^p$ norm is about $R^{(2n-2)/p}.$

Hence O-T requires that
\bea
R^{(2n-2)/p} & \leq & R^{(2n-2)/q}\\
\frac{1}{p} \log R & \leq & \frac{1}{q}\log R\\
\eea
for all $R$. In particular, for large $R$ this shows that $q\leq p.$ So only the case $p=q$, $0<p\leq 2$ can work even for unweighted spaces.\\

Finally, we recall that the restrictions obtained work in weighted spaces as well.
(Use $\int|f|^p e^{-p\phi}$ instead of $\int |f|^pe^{-\phi}$ to avoid scaling problems as one sees from using a family of weights, $\phi_c=\phi+c, c\in \mathbb R.$)

\section{Appendix 4, The strong openness conjecture in $L^p.$}

We give an example where $L^2$ results extend to all $L^p,0<p<\infty.$
Let $\phi_j<0$ be a sequence of plurisubharmonic functions defined in a neighborhood of the origin 
in $\mathbb C^n.$ Suppose that $\phi_j\leq \phi_{j+1}$ and let $\phi=\lim \phi_j.$
Let $0<p<\infty.$ The strong openness conjecture in $L^p$ says that
if $f$ is a holomorphic function in a neighborhood $U$ of $0$ and $\int_U|f|^pe^{-\phi}<\infty$,
then there exists a $j$ and a neighborhood $V\subset U$ of $0$
so that $\int_V|f|^pe^{-\phi_j}<\infty.$
This conjecture has been proved for $p=2$ by Qi'an Guan and Xiangyu Zhou, Strong openness conjecture
arXiv:1311.3781v1 [math.CV] 15 Nov 2013.\\

We see here how the result for $p=2$ immediately proves the conjecture for all $p.$

\begin{theorem}
The strong openness conjecture holds in $L^p$ for $p<2.$
\end{theorem}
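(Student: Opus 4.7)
The plan is to reduce the $L^p$ strong openness to the $L^2$ case (which is Guan--Zhou) by a clever change of weight. The key observation is that for $p<2$, the factor $(2-p)\log|f|$ is plurisubharmonic, so absorbing it into the weight preserves plurisubharmonicity.

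First I would dispose of the trivial case $f\equiv 0$, where $\int_V|f|^p e^{-\phi_j}=0$ for every $V$ and $j$. Assuming $f\not\equiv 0$, I would define, on the neighborhood $U$ of the origin,
\[
\psi \;:=\; \phi + (2-p)\log|f|, \qquad \psi_j \;:=\; \phi_j + (2-p)\log|f|.
\]
Since $2-p>0$, $\phi_j$ is plurisubharmonic, and $\log|f|$ is plurisubharmonic (by Corollary~1.6.6), each $\psi_j$ is plurisubharmonic; likewise $\psi$ is plurisubharmonic as the limit of $\psi_j$. The sequence $\{\psi_j\}$ is increasing (with $j$) because $\{\phi_j\}$ is increasing, and $\psi_j\nearrow \psi$ pointwise.

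Next I would rewrite the given integrability as an $L^2$ integrability with weight $\psi$. Outside the zero set of $f$, we have
\[
|f|^{2}\,e^{-\psi} \;=\; |f|^{2}\,e^{-\phi}\,|f|^{-(2-p)} \;=\; |f|^{p}\,e^{-\phi},
\]
and on the pluripolar set $\{f=0\}$ (which has Lebesgue measure zero since $f\not\equiv 0$), both sides vanish. Hence
\[
\int_U |f|^{2}\,e^{-\psi}\,d\lambda \;=\; \int_U |f|^{p}\,e^{-\phi}\,d\lambda \;<\;\infty.
\]
Now I would invoke the strong openness conjecture in the $L^2$ case (Guan--Zhou) applied to the holomorphic function $f$ and the increasing sequence $\psi_j\nearrow \psi$: there exist an index $j$ and a neighborhood $V\subset U$ of $0$ such that
\[
\int_V |f|^{2}\,e^{-\psi_j}\,d\lambda \;<\;\infty.
\]
Translating back using the same pointwise identity $|f|^{2}e^{-\psi_j}=|f|^{p}e^{-\phi_j}$ (a.e.), I would conclude
\[
\int_V |f|^{p}\,e^{-\phi_j}\,d\lambda \;=\; \int_V |f|^{2}\,e^{-\psi_j}\,d\lambda \;<\;\infty,
\]
which is exactly the desired conclusion.

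There is no real obstacle here beyond justifying that the modified weights are legitimate inputs for the $L^2$ theorem — namely that $\psi_j$ is plurisubharmonic, that $\psi_j\nearrow \psi$, and that the measure-zero pluripolar set where $\log|f|=-\infty$ does not affect the integrals (since it is multiplied by the vanishing factor $|f|^2$). The method is specific to $p<2$ because only then is the coefficient $2-p$ positive, which is exactly what keeps $(2-p)\log|f|$ plurisubharmonic; for $p>2$ one would need $-\log|f|$ to be plurisubharmonic, which it is not, so this reduction does not extend.
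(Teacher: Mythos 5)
Your proof is correct and follows exactly the same route as the paper: absorb the factor $(2-p)\log|f|$ into the weight (valid since $2-p>0$ makes it plurisubharmonic), rewrite $\int|f|^p e^{-\phi}$ as $\int|f|^2 e^{-\psi}$, and invoke the Guan--Zhou $L^2$ result. The only additions are the explicit disposal of the trivial case $f\equiv 0$ and the remark about the measure-zero set $\{f=0\}$, which the paper leaves implicit.
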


\begin{proof}
Let $\{\phi_j\}_{j=1}^\infty$ be a sequence of plurisubharmonic functions converging to a negative function $\phi$ on a neighborhood of $0$ in $\mathbb C^n,$
$\phi_j\nearrow \phi$.
Suppose that $\int_U|f|^pe^{-\phi}<\infty$ on some neighborhood $U$ of the origin, where $f$ is a holomorphic function. 
Let $v_j=\phi_j+(2-p)\log|f|, v=\phi+(2-p)\log|f|.$ The $v_j$ are plurisubharmonic functions 
and $v_j\nearrow v.$ 
Moreover,
$$
\int_U|f|^2 e^{-v}=\int_U |f|^2 e^{-\phi-(2-p)\log|f|}=\int |f|^2 |f|^{p-2}e^{-\phi}<\infty.
$$
Hence by the strong openness conjecture for $p=2$, it follows that for some large enough $j,$
and some smaller neighborhood $V$ of the origin,
$$
\int_V |f|^pe^{-\phi_j}= \int_V |f|^2e^{-(2-p)\log|f|-\phi_j}=\int_V |f|^2e^{-v_j}<\infty.
$$
\end{proof}

\begin{corollary}
The strong openness conjecture holds in $L^p$ for all $0<p<\infty.$
\end{corollary}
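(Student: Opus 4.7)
The plan is to reduce the claim for arbitrary $p>0$ to the range $0<p\leq 2$, which is already handled: the case $0<p<2$ is exactly Theorem 18.1, and the case $p=2$ is the base result of Guan--Zhou cited at the beginning of the section. Hence only $p>2$ requires new work, and I would dispose of it by a one-line substitution.

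The idea is to raise $f$ to a power to lower the exponent. Given a holomorphic $f$ on a neighborhood $U$ of $0$ with $\int_U|f|^pe^{-\phi}<\infty$ and $p>2$, I would pick a positive integer $N$ large enough that $q:=p/N\leq 2$ (any $N\geq p/2$ works) and set $g:=f^N$, which is again holomorphic on $U$. Since
\[
|g|^q=|f^N|^{p/N}=|f|^p,
\]
the hypothesis $\int_U|f|^pe^{-\phi}<\infty$ is literally the same as $\int_U|g|^qe^{-\phi}<\infty$, while the increasing sequence $\phi_j\nearrow\phi$ is left untouched. Applying the strong openness conjecture in the exponent $q\in(0,2]$ to $g$ — either Theorem 18.1 when $q<2$, or the Guan--Zhou theorem when $q=2$ — produces an index $j$ and a neighborhood $V\subset U$ of the origin with $\int_V|g|^qe^{-\phi_j}<\infty$, which is exactly $\int_V|f|^pe^{-\phi_j}<\infty$, as required.

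There is no serious obstacle here; the argument is purely formal, and the only thing to verify is that $f\mapsto f^N$ preserves both holomorphy and the weighted integrability relation through the exponent change $p\mapsto p/N$, both of which are immediate. The corollary is therefore an essentially trivial consequence of Theorem 18.1 combined with the Guan--Zhou $L^2$ result.
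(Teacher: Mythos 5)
Your argument is the same power-substitution trick the paper uses: replace $f$ by $f^N$ (the paper writes $f^m$) so that $|f^N|^{p/N}=|f|^p$, and invoke the already-established strong openness in the exponent $p/N\le 2$. The only cosmetic difference is that you allow $p/N=2$ and appeal directly to Guan--Zhou in that edge case, whereas the paper chooses $m$ so that $p/m<2$ strictly; both versions are correct.
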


\begin{proof}
Choose $p$ and let $m$ be a positive integer so that $p/m<2.$
Suppose $\int_U |f|^pe^{-\phi}<\infty.$
Then $\int_U |f^m|^{p/m}e^{-\phi}<\infty.$  But since $p/m<2$ it follows that for large $j$
that $\int |f^m|^{p/m}e^{-\phi_j}<\infty,$ i.e. $\int |f|^{p}e^{-\phi_j}<\infty.$ 
\end{proof}

We can also vary $p$ and $\phi$ at the same time.

\begin{theorem}
Let $0<p<\infty$ and suppose that $\phi<0$ in a neighborhood $U$ of $0$ in $\mathbb C^n.$
Suppose that $p_j\nearrow p$ and $\phi_j\nearrow \phi$ is a sequence of plurisubharmonic functions on $U.$ If $f$ is a holomorphic function on $U$ such that $\int_U |f|^pe^{-\phi}<\infty$, then
there exists $j$ and a smaller neighborhood $V$ of $0$ so that
$$
\int_V |f|^{p_j} e^{-\phi_j}<\infty.
$$
\end{theorem}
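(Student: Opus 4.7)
The plan is to reduce the joint variation of $p_j$ and $\phi_j$ to a variation of only the weight function, so that the Corollary (strong openness in $L^p$ for fixed $p$) can be applied. Without loss of generality assume $f\not\equiv 0$, and shrink $U$ if necessary so that $|f|\leq e^{C}$ on $U$ for some constant $C$. Set $\varepsilon_j=p-p_j\geq 0$, so $\varepsilon_j\searrow 0$. Define
$$
\psi_j \;=\; \phi_j + \varepsilon_j(\log|f|-C).
$$
Since $f$ is holomorphic, $\log|f|$ is plurisubharmonic, and $\varepsilon_j\geq 0$, each $\psi_j$ is plurisubharmonic. Because $\log|f|-C\leq 0$ on $U$, we have $\psi_j\leq \phi_j<0$.

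The key point is monotonicity:
$$
\psi_{j+1}-\psi_j \;=\; (\phi_{j+1}-\phi_j) \;+\; (p_j-p_{j+1})\bigl(\log|f|-C\bigr).
$$
The first summand is $\geq 0$ by hypothesis; in the second, $p_j-p_{j+1}\leq 0$ and $\log|f|-C\leq 0$, so their product is $\geq 0$. Hence $\psi_j\nearrow \psi$ for some negative plurisubharmonic pointwise limit $\psi$. Off the zero set of $f$ we have $\varepsilon_j(\log|f|-C)\to 0$, so $\psi=\phi$ on $\{f\neq 0\}$; on $\{f=0\}$ the function $\psi$ equals $-\infty$.

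Next I would check the integrability of $|f|^{p}e^{-\psi}$ needed to apply the Corollary. Since $\{f=0\}$ has Lebesgue measure zero and $|f|^{p}=0$ there, with the convention $0\cdot\infty=0$ we get
$$
\int_U |f|^{p}e^{-\psi}\,d\lambda \;=\; \int_{U\cap\{f\neq 0\}} |f|^{p}e^{-\phi}\,d\lambda \;\leq\; \int_U |f|^{p}e^{-\phi}\,d\lambda \;<\;\infty.
$$
By the Corollary (strong openness in $L^{p}$ for the sequence $\psi_j\nearrow\psi$), there exist an index $j$ and a neighborhood $V\subset U$ of $0$ such that $\int_V|f|^{p}e^{-\psi_j}\,d\lambda<\infty$. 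Finally, unwinding the definition,
$$
\int_V|f|^{p}e^{-\psi_j}\,d\lambda \;=\; e^{\varepsilon_j C}\int_V|f|^{p-\varepsilon_j}e^{-\phi_j}\,d\lambda \;=\; e^{\varepsilon_j C}\int_V|f|^{p_j}e^{-\phi_j}\,d\lambda,
$$
which gives $\int_V|f|^{p_j}e^{-\phi_j}\,d\lambda<\infty$, as desired.

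The only delicate step I foresee is the monotonicity of $\psi_j$: it is precisely the requirement to bound $|f|$ above (to make $\log|f|-C\leq 0$) that forces us to shrink $U$ at the outset and to subtract the constant $C$. Without that normalization, the sign of $\log|f|$ on the set $\{|f|>1\}$ would break monotonicity and we could not invoke the fixed-$p$ version of the conjecture. Everything else amounts to routine algebra once the right auxiliary weight $\psi_j$ is identified.
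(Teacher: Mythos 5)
Your argument is correct and is essentially the paper's own proof: both replace $\phi_j$ by the auxiliary plurisubharmonic weight $\phi_j+(p-p_j)\log|f|$ (modulo a harmless additive constant), use a bound on $|f|$ to get monotonicity, and then invoke the fixed-$p$ strong openness corollary. The only cosmetic difference is that the paper normalizes by assuming $|f|<1$ whereas you subtract the constant $C$; you also spell out the measure-zero zero-set issue a bit more explicitly, but the underlying idea is identical.
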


\begin{proof}
We can assume that $|f|<1.$
Consider the weights $v_j=(p-p_j)\log |f|+\phi_j.$ Then $v_j\nearrow \phi$ except for possibly the zero set of $f.$  Hence by the strong openness conjecture for $L^p$, $\int_V |f|^pe^{-(p-p_j)\log|f|-\phi_j}<\infty$ for large $j$.
Hence $\int_V |f|^{p_j}e^{-\phi_j}<\infty.$\\
\end{proof}

We have a similar result for decreasing sequence in $p_j.$

\begin{theorem}
Let $0<p<\infty$ and suppose that $\phi<0$ in a neighborhood $U$ of $0$ in $\mathbb C^n.$
Suppose that $p_j\searrow p$ and $\phi_j\nearrow \phi$ is a sequence of plurisubharmonic functions on $U.$ If $f$ is a holomorphic function on $U$ such that $\int_U |f|^pe^{-p\phi}<\infty$, then
there exists $j$ and a smaller neighborhood $V$ of $0$ so that
$$
\int_V |f|^{p_j} e^{-p_j\phi_j}<\infty.
$$
\end{theorem}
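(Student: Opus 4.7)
The plan is to reduce to the strong openness conjecture in $L^p$, already established for every $0<p<\infty$ in the preceding corollary, by choosing the correct increasing sequence of plurisubharmonic weights.

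First I would arrange $|f|<1$ on the working neighborhood. Replacing $f$ by $cf$ for a small constant $c>0$ rescales both integrals by the same factor $|c|^{p}$ or $|c|^{p_j}$, so the hypothesis and the conclusion are each preserved; in particular I may shrink $U$ to a smaller neighborhood $U'\subset U$ of $0$ on which $|f|<1$. Since $p_j\ge p$, the elementary pointwise estimate $|f|^{p_j}\le|f|^p$ then holds on $U'$, and it suffices to find $j$ and $V\subset U'$ with $\int_V|f|^p e^{-p_j\phi_j}<\infty$: the exponent on $f$ is now fixed.

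Next I would set $v_j:=p_j\phi_j$ and $v:=p\phi$. Each is plurisubharmonic (a nonnegative scalar times a psh function), $v_j\to v$ pointwise, and the telescoping identity
\[
v_{j+1}-v_j \;=\; p_{j+1}(\phi_{j+1}-\phi_j)\;+\;(p_{j+1}-p_j)\,\phi_j
\]
shows $v_j\nearrow v$: the first summand is $\ge 0$ because $\phi_j\nearrow$, and the second is $\ge 0$ because $p_{j+1}-p_j\le 0$ and $\phi_j\le\phi<0$ carry matching signs. This is precisely where the hypothesis $\phi<0$ is used. The stated hypothesis reads $\int_{U'}|f|^p e^{-v}<\infty$, so the strong openness conjecture in $L^p$ applied to the weights $v_j\nearrow v$ yields an index $j_0$ and a neighborhood $V$ of $0$ with $\int_V|f|^p e^{-v_{j_0}}<\infty$. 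For any $j\ge j_0$ we have $v_j\ge v_{j_0}$, hence
\[
\int_V|f|^{p_j}e^{-p_j\phi_j} \;\le\; \int_V|f|^{p}\,e^{-v_j} \;\le\; \int_V|f|^{p}\,e^{-v_{j_0}} \;<\;\infty,
\]
which is the desired conclusion.

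The main, and really only, obstacle is identifying the correct auxiliary weight. For the companion theorem with $p_j\nearrow p$ one absorbs $(p-p_j)\log|f|\le 0$ into the weight and the combination remains plurisubharmonic; here the sign of $p_j-p$ is reversed and the would-be weight $\phi_j-(p_j-p)\log|f|$ is no longer plurisubharmonic, so that trick fails. The fix is to exploit $\phi<0$ directly: with that sign the products $p_j\phi_j$ are themselves increasing in $j$, and the mismatch between $|f|^{p_j}$ and $|f|^{p}$ is taken care of for free by the pointwise bound available once $|f|<1$. After that setup the strong openness conjecture in $L^p$ does all the work.
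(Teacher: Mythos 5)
Your proposal is correct and follows essentially the same route as the paper: normalize so that $|f|<1$, observe that $v_j=p_j\phi_j$ increases to $v=p\phi$, apply the $L^p$ strong openness result to get finiteness of $\int_V|f|^p e^{-p_j\phi_j}$, and finish with the pointwise bound $|f|^{p_j}\le|f|^p$. The one thing you add is the explicit telescoping check that $v_j\nearrow v$ (the paper asserts $p_j\phi_j\nearrow p\phi$ without detail), which is a worthwhile clarification since it is precisely where the sign hypothesis $\phi<0$ enters.
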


\begin{proof}
We can assume that $|f|<1.$
We have that $p_j\phi_j\nearrow p\phi.$
Suppose that $\int_U|f|^pe^{-p\phi}<\infty.$
Hence by the strong openness conjecture in $L^p$, we see there is a $j$ and a $V$ so that
$\int_V|f|^p e^{-p_j\phi_j}<\infty.$
Since $|f|^{p_j}\leq |f|^p$, the result follows.
\end{proof}


\begin{thebibliography}{}

\bibitem[H]{H}
Hormander, Lars,: An introduction to complex analysis in several variables.
D. Van Nostrand. (1966).

\end{thebibliography}
\end{document}